\documentclass[smallextended]{svjour3}
\usepackage[utf8]{inputenc}
\usepackage{cite}
\usepackage{amsmath}
\usepackage{amsfonts}
\usepackage{amssymb}
\usepackage{subfigure}
\usepackage{enumerate}
\usepackage{hyperref}
\usepackage[colorinlistoftodos]{todonotes}
\usepackage{fullpage}
\usepackage{float}

\newcommand{\itrpp}{{\rm itr}_p[A,B](\bx)}

\newcommand {\B} {\mathbb B}

\newcommand{\al}{\alpha}
\newcommand{\be}{\beta}

\newcommand{\la}{\lambda}
\newcommand{\de}{\delta}
\newcommand{\eps}{\varepsilon}
\newcommand{\bx}{\bar x}

\newcommand {\bd} {{\rm bd}\,}

\newcommand{\tr}{{\rm tr}[A,B](\bx)}
\newcommand{\str}{{\rm str}[A,B](\bx)}
\newcommand{\itr}{{\rm itr}[A,B](\bx)}
\newcommand{\itrc}{{\rm itr}_c[A,B](\bx)}

\newcommand{\itrd}[1]{{\rm itr}_{#1}[A,B](\bx)}

\newcommand{\ang}[1]{\left\langle #1 \right\rangle}
\newcommand{\qdtx}[1]{\quad\mbox{#1}\quad}

\smartqed


%
\renewcommand{\equiv}{:=}
\newcommand{\Ball}{{\mathbb{B}}}


\newcommand{\paren}[1]{\left(#1\right)}
\newcommand{\parenn}[1]{\left[#1\right]}

\newcommand{\set}[2]{\left\{#1\,\left|\,#2\right.\right\}}
\newcommand{\ip}[2]{\left\langle #1,#2\right\rangle}
\newcommand{\norm}[1]{\left\|#1\right\|}


\newcommand{\mmap}[3]{#1:\,#2\rightrightarrows #3\,}



\newcommand {\Limsup} {\mathop{{\rm Lim\,sup}\,}}

\DeclareMathOperator{\dist}{dist}

\DeclareMathOperator{\cone}{{cone}}



\newcommand{\ncone}[1]{{N}_{#1}}

\newcommand{\lncone}[1]{{\overline{N}}_{#1}}






\title{Some new characterizations of intrinsic transversality in Hilbert spaces
\thanks{NHT and MV are supported by the European Research Council under the European Union's Seventh Framework Programme (FP7/2007-2013)/ERC grant agreement No. 339681.
THB is supported by the Australian Research Council, project DP160100854.
NDC is supported by an Australian Government Research Training Program Fee Off-Set Scholarship and a CIAO PhD Research Scholarship
through Federation University Australia.
}}
\author{Nguyen Hieu Thao, Thi Hoa Bui, Nguyen Duy Cuong, Michel Verhaegen}
\institute{Nguyen Hieu Thao \at
	Delft Center for Systems and Control, Delft University of Technology, 2628CD Delft, The Netherlands.
	Department of Mathematics, Teacher College, Can Tho University, Can Tho City, Vietnam.\\
	\email{h.t.nguyen-3@tudelft.nl,\ nhthao@ctu.edu.vn}
\and	
Thi Hoa Bui \at
Centre for Informatics and Applied Optimization, Federation University Australia, POB 663, Ballarat, VIC 3350, Australia.\\
\email{h.bui@federation.edu.au}
\and 
Nguyen Duy Cuong \at
Centre for Informatics and Applied Optimization, Federation University Australia, POB 663, Ballarat, VIC 3350, Australia.
Department of Mathematics, College of Natural Sciences, Can Tho University, Can Tho City, Vietnam.\\
\email{	duynguyen@students.federation.edu.au, ndcuong@ctu.edu.vn}
\and
Michel Verhaegen \at
Delft Center for Systems and Control,
Delft University of Technology,
2628CD Delft, The Netherlands.\\
\email{m.verhaegen@tudelft.nl}
}
\dedication{Dedicated to Professor Alexander Ya Kruger on the occasion of his 65$^{\,th}$ birthday}
\date{Received: date / Accepted: date}
\begin{document}

\maketitle

\begin{abstract}
Motivated by a number of research questions concerning transversality-type properties of pairs of sets recently raised by Ioffe \cite{Iof17.1} and Kruger \cite{Kru18}, this paper reports several new characterizations of the intrinsic transversality property in Hilbert spaces.
Our dual space results clarify the picture of intrinsic transversality, its variants and the only existing sufficient dual condition for subtransversality, and actually unify them.
New primal space characterizations of the intrinsic transversality which is originally a dual space condition lead to new understanding of the property in terms of primal space elements for the first time.
As a consequence, the obtained analysis allows us to address a number of research questions asked by the two aforementioned researchers about the intrinsic transversality property in the Hilbert space setting.

\keywords{Transversality \and Subtransversality \and Intrinsic transversality \and Normal cone \and Relative normal cone \and Alternating projections \and Linear convergence}

\subclass{Primary 49J53 \and 65K10 \and Secondary 49K40 \and 49M05 \and 49M37 \and 65K05 \and 90C30}
\end{abstract}

\section{Introduction}

Transversality and subtransversality are the two important properties of collections of sets which reflect the mutual arrangement of the sets around the reference point in normed spaces.
These properties are widely known as \emph{constraint qualification conditions} in optimization and variational analysis for formulating optimality conditions \cite{Mor06.1,Iof17.1,YeYe97,NgaThe01} and calculus rules for subdifferentials, normal cones and coderivatives \cite{Mor06.1,NgaThe01,KruLop12.1,KruLop12.2,Iof17,Iof00, Iof16,Iof16.2},
and as \emph{key ingredients} for establishing sufficient and/or necessary conditions for linear convergence of computational algorithms \cite{BauBor96,LewMal08,LewLukMal09,HesLuk13,LukTebNgu18,KruLukNgu18,Pha16,Tha18,DruLew18}.
We refer the reader to the papers \cite{Kru05,Kru06,Kru09,KruTha13,KruLop12.1,KruLop12.2,KruTha15,KruLukNgu17,KruLukNgu18} 
by Kruger and his collaborators for a variety of their sufficient and/or necessary conditions in both primal and dual spaces.

Transversality is strictly stronger than subtransversality, in fact, the former property is sufficient for many applications where the latter one is not, for example, in proving linear convergence of the alternating projection method for solving nonconvex feasibility problems \cite{LewMal08,LewLukMal09}, or in establishing error bounds for the Douglas-Rachford algorithm \cite{HesLuk13,Pha16} and its modified variants \cite{Tha18}.
However, transversality seems to be too restrictive for many applications, and in fact there have been a number of attempts devoted to research for weaker properties but still sufficient for the application of specific interest.
Of course, there would not exist any universal transversality-type property that works well for all applications.
When formulating necessary optimality conditions for optimization problems in terms of abstract Lagrange multipliers and establishing intersection rules for tangent cones in Banach spaces, Bivas et al. \cite{BivKraRib18} recently introduced a property called \emph{tangential transversality}, which is a primal space property lying between transversality and subtransversality.
When establishing linear convergence criteria of the alternating projection algorithm for solving nonconvex feasibility problems, a series of meaningful transversality-type properties have been introduced and analyzed in the literature: \emph{affine-hull transversality} \cite{Pha16}, \emph{inherent transversality} \cite{BauLukPhaWan13a}, \emph{separable intersection property} \cite{NolRon16} and \emph{intrinsic transversality} \cite{DruIofLew15}.
Compared to \emph{tangential transversality}, the latter ones are dual space properties since they are defined in terms of normal vectors.
Compared to the transversality property, all the above transversality-type properties are not dependent on the underlying space, that is, if a property is satisfied in the ambient space $X$, then so is it in any ambient space containing $X$.
Recall that in the finite dimensional setting, a pair of two closed sets $\{A,B\}$ is transversal at a common point $\bar{x}$ if and only if
\begin{equation}\label{tran_dual}
\lncone{A}(\bar{x}) \cap \paren{-\lncone{B}(\bar{x})} = \{0\},
\end{equation}
where $\lncone{A}(\bar{x})$ stands for the limiting normal cone to $A$ at $\bx$, see \eqref{LNC} for the definition.
This characterization reveals that transversality is a property that involves all the limiting normals to the sets at the reference point.
This fundamentally explains why the property is not invariant with respect to the ambient space as well as becomes too restrictive for many applications.  
Indeed, the hidden idea leading to the introduction of the above dual space transversality-type properties in the context of nonconvex alternating projections is simply based on the observation that not all such normal vectors are relevant for analyzing convergence of the algorithm.
\emph{Affine-hull transversality} is merely transversality but considered only in the affine hull $L$ of the two sets, that is, the pair of translated sets $\{A-\bar{x},B-\bar{x}\}$ is transversal at $0$ in the subspace $L-\bar{x}$.
As a consequence, the analysis of this property is straightforwardly obtained from that of transversality \cite{Pha16}.
The key feature of \emph{inherent transversality}\footnote{The property originated in \cite[Theorem 2.13]{BauLukPhaWan13a} without a name, then was refined and termed as \emph{inherent transversality} in Definition 4.4 of the reprint ``Drusvyatskiy, D., Ioffe, A.D., Lewis, A.S.: Alternating projections and coupling slope. Preprint, arXiv:1401.7569, 1--17 (2014)''.} 
\cite{BauLukPhaWan13a} is the use of \emph{restricted normal cones} in place of the conventional normal cones in characterization \eqref{tran_dual} of transversality in Euclidean spaces.
As a result, the analysis of this property is reduced to the calculus of the restricted normal vectors as conducted in \cite{BauLukPhaWan13a}.
The \emph{separable intersection property} \cite[Definition 1]{NolRon16} was motivated by nonconvex alternating projections and ultimately designed for this algorithm, and hence it seems to have significant impact only in this context. 
\emph{Intrinsic transversality} was also introduced in the context of nonconvex alternating projections in Euclidean spaces \cite{DruIofLew15}, it turns out to be an important property itself in variational analysis as demonstrated by Ioffe \cite[Section 9.2]{Iof17} and \cite{Iof17.1} and Kruger \cite{Kru18}.
On the one hand, a variety of characterizations of intrinsic transversality in various settings (Euclidean, Hilbert, Asplund, Banach and normed spaces) have been established by a number of researchers \cite{DruIofLew15,NolRon16,KruTha16,KruLukNgu18,Iof17,Iof17.1,Kru18}.
On the other hand, there are still a number of important research questions about this property, for example, the open questions 1-6 asked by Kruger in \cite[page 140]{Kru18} or the challenge by Ioffe about primal counterparts of intrinsic transversality \cite[page 358]{Iof17.1}.
It is known that for pairs of closed and convex sets in Euclidean spaces, the only existing sufficient dual condition for subtransversality is also a necessary condition, and it is equivalent to intrinsic transversality \cite{Kru18}.
Another interesting question is whether the latter two dual properties is equivalent in the nonconvex setting.

Motivated mainly by the above research questions, this paper is devoted to investigate further primal and dual characterizations of intrinsic transversality in connection with related properties.
Apart from the appeal to address the above research questions, this work was also motivated by the potential for meaningful applications of these transversality-type properties, for example, in establishing convergence criteria for more involved projection algorithms (rather than the alternating projection method) and in formulating calculus rules for \emph{relative normal cones} (see Definition \ref{D2}).

The organization of the paper is as follows.
New dual space results in the Hilbert space setting are presented in Section \ref{s:Tran,subtran,itr tran} with the key estimate obtained in Theorem \ref{t:question 3}.
We show the equivalence between the only existing sufficient dual condition for subtransversality \cite[Theorem 1]{KruLukNgu17} and the intrinsic transversality property, and provide a refined characterization of the properties, Corollary \ref{c:equivalence} and Corollary \ref{c:suff_subtran}.
These results significantly clarify the picture of intrinsic transversality, its variants introduced and analyzed in \cite{Kru18} and sufficient dual conditions for subtransversality, and actually unify them.
As a consequence, we address an important research question asked by Kruger in \cite[question 3, page 140]{Kru18} in the Hilbert space setting.
The analysis of intrinsic transversality in finite dimensional spaces is presented in Section \ref{s:finite dimensional}.
The notions of \emph{(restricted) relative limiting normal cones} \cite[Definition 2]{Kru18}, which themselves can also be of interest, were introduced and proved to be useful in characterizing transversality-type properties in \cite{Kru18}.
We prove that the two cones are equal when restricted on the constraint set $C$ given by \eqref{con_cone} only elements in which are of interest for characterizing transversality-type properties, Theorem \ref{t:question 4}.
This new understanding of these normal objects in turn yields insights about intrinsic transversality.
As a consequence, we address another important research question asked by Kruger in \cite[question 4, page 140]{Kru18} in the Euclidean space setting.
New primal space results in the Hilbert space setting are presented in Section \ref{s:primal}.
We formulate for the first time a primal space characterization of intrinsic transversality, Theorem \ref{t:primal result}.
These results, which substantially open perspective to view intrinsic transversality from primal space elements, were motivated by the research challenge raised by Ioffe in \cite[page 358]{Iof17.1}.
\bigskip 

Our basic notation is standard; cf. \cite{Mor06.1,RocWet98,DonRoc14}.
The setting throughout the current paper is a Hilbert space $X$.
In order to clearly distinguish elements in the primal space from those in the dual space, we also denote $X^*$ the topological dual of $X$ and
$\langle\cdot,\cdot\rangle$ the bilinear form defining the pairing between the two spaces.
The open unit balls in $X$ and $X^*$ are denoted by $\B$ and $\B^*$, respectively, and $\B_\delta(x)$ (respectively, $\overline{\B}_\delta(x)$) stands for the open (respectively, closed) ball with center $x$ and radius $\delta>0$.
The distance from a point $x \in X$ to a set $\Omega \subset X$ is defined by $\dist(x, \Omega):=\inf_{\omega\in \Omega}\|x-\omega\|$, and we use the convention $\dist(x,\Omega) = +\infty$ when $\Omega = \emptyset$.
The set-valued mapping
\begin{equation*}
\mmap{P_{\Omega}}{X}{X}\colon
x\mapsto \set{\omega\in \Omega}{\norm{x-\omega} = \dist(x,\Omega)}
\end{equation*}
is the \emph{projector} on $\Omega$.
An element $\omega\in P_{\Omega}(x)$ is called a {\em projection}.
This exists for any $x\in X$ and any closed set $\Omega\subset X$.
Note that the projector is not, in general, single-valued.
If $\Omega$ is closed and convex, then $P_\Omega$ is singleton everywhere.
The \emph{inverse of the projector}, $P^{-1}_{\Omega}$, is defined by
\begin{equation*}
P^{-1}_{\Omega}(\omega)\equiv\set{x\in X}{\omega \in P_{\Omega}(x)}\quad \forall \omega \in \Omega.
\end{equation*}
The proximal normal cone to $\Omega$ at a point $\bx \in \Omega$ is defined by
\begin{gather*}
N^p_{\Omega}(\bx) := \cone\left(P_{\Omega}^{-1}(\bx) - \bx \right),
\end{gather*}
which is a convex cone.
Here $\cone(\cdot)$ denotes the smallest cone containing the set within the brackets.\\
The \emph{Fr\'echet normal cone} to $\Omega$ at $\bx$ is defined by (cf. \cite{Kru03})
\begin{gather}\label{NC}
	N_{\Omega}(\bx):= \left\{x^\ast\in X^\ast\mid
	\limsup_{x\stackrel{\Omega}{\rightarrow}\bar x, x\neq \bar{x}} \frac {\langle x^\ast,x-\bx\rangle}
	{\|x-\bx\|} \le 0 \right\},
\end{gather}
which is a nonempty closed convex cone.
Here $x\stackrel{\Omega}{\rightarrow}\bar x$ means $x \rightarrow \bar x$ and $x \in \Omega$.\\
The \emph{limiting normal cone} to $\Omega$ at $\bx$ is defined by
		\begin{align}\label{LNC}
		\overline{N}_{\Omega}(\bar x):= \Limsup_{x\stackrel{\Omega}{\rightarrow}\bar x}N_{\Omega}(x):=\left\{x^*=\lim_{k\to\infty}x^*_k\mid x^*_k\in N_{\Omega}(x_k),\;x_k\in \Omega,\;x_k\to\bx\right\}.
		\end{align}	
In the above definition, the \emph{Fr\'echet} normal cone can equivalently be replaced by the \emph{proximal} one.
It holds that $N^p_{\Omega}(\bx)
\subset N_{\Omega}(\bx) \subset \overline{N}_{\Omega}(\bar x)$ and that if $\Omega$ is closed and $\dim X<\infty$, then $\overline{N}_{\Omega}(\bar x) \ne \{0\}$ if and only if $\bx \in \bd \Omega$.
By convention, we define $N^p_{\Omega}(\bx) = N_{\Omega}(\bx) = \overline{N}_{\Omega}(\bar x) = \emptyset$ whenever $\bx\notin \Omega$.
If $\Omega$ is a convex set, then all the above normal cones coincide and reduce to the one in the sense of convex analysis (e.g., \cite[Proposition~2.4.4]{Cla83}, \cite[Proposition~1.19]{Kru03})
	\begin{gather*}\label{CNC}
	N^p_{\Omega}(\bx) = N_{\Omega}(\bx) = \overline{N}_{\Omega}(\bar x) = \left\{x^*\in X^*\mid \langle x^*,x-\bx \rangle \leq 0 \qdtx{for all} x\in \Omega\right\}.
	\end{gather*}

\section{Transversality, Subtransversality and Intrinsic Transversality}\label{s:Tran,subtran,itr tran}

The following definition recalls possibly the most widely known regularity properties of pairs of sets.

\begin{definition}\label{D3}
Let $\{A,B\}$ be a pair of sets in $X$, and $\bx\in A\cap B$.
\begin{enumerate}
\item
$\{A,B\}$ is \emph{subtransversal} at $\bar x$ if there exist numbers $\alpha\in]0,1]$ and $\delta>0$ such that
\begin{align}\label{D1-1}
\alpha \dist\left(x,A\cap B\right)\le \max\left\{\dist(x,A),\dist(x,B)\right\}\quad \forall x\in \B_{\delta}(\bar{x}).
\end{align}
\item
$\{A,B\}$ is \emph{transversal} at $\bar x$ if there exist numbers $\alpha\in]0,1]$ and $\delta>0$ such that
\begin{align}\label{D1-2}
\alpha \dist\left(x,(A-x_1)\cap (B-x_2)\right)\le \max\left\{\dist(x,A-x_1), \dist(x,B-x_2)\right\}\quad \forall x\in \B_{\delta}(\bar{x}),\; x_1,x_2\in \delta\B.
\end{align}
\end{enumerate}
The exact upper bound of all $\alpha\in]0,1]$ such that condition \eqref{D1-1} or condition \eqref{D1-2} is satisfied for some $\de>0$ is denoted by $\str$ or $\tr$, respectively, with the convention that the supremum of the empty set equals $0$.
Then $\{A,B\}$ is subtransversal (respectively, transversal) at $\bx$ if and only if $\str > 0$ (respectively, $\tr > 0$).
It is clear that \eqref{D1-2} implies \eqref{D1-1} by simply setting $x_1 = x_2=0$.
That is, transversality is stronger than subtransversality and, as a consequence, it always holds that $\tr \le \str$.
\end{definition}

\begin{remark}
\begin{enumerate}
\item (\emph{subtransversality})
The subtransversality property was initially studied by Bauschke and Borwein \cite{BauBor93} under the name \textit{linear regularity} as a sufficient condition for linear convergence of the alternating projection algorithm for solving convex feasibility problems in Hilbert spaces.
Their results were later extended to the cyclic projection algorithm for solving feasibility problems involving a finite number of convex sets \cite{BauBor96}.
The term of \textit{linear regularity} was widely adapted in the community of variational analysis and optimization for several decades, for example, Bakan et al. \cite{BakDeuLi05}, Bauschke et al. \cite{BauBorLi99,BauBorTse00}, Li et al. \cite{LiNgPon07}, Ng and Zang \cite{NgZan07}, Zheng and Ng \cite{ZheNg08}, Kruger and his collaborators \cite{Kru05,Kru06,Kru09,KruTha13,KruTha14,KruTha15,Tha15}.
In the survey \cite{Iof00}, Ioffe used the property (without a name) as a qualification condition for establishing calculus rules for normal cones and subdifferentials.
Ngai and Th\'era \cite{NgaThe01} named the property as \textit{metric inequality}  and used it to characterize the Asplund space as well as to establish calculus rules for the limiting Fr\'echet subdifferential.
Penot \cite{Pen13} referred the property as \textit{linear coherence} and applied it in formulating calculus rules for the \emph{viscosity Fr\'echet} and \emph{viscosity Hadamard} subdifferentials.
The name (sub)transversality was coined by Ioffe in the 2016 survey \cite[Definition 6.14]{Iof16}, and then he explained the philosophy for this choice in his 2017 book
\cite[page 301]{Iof17.1} that ``Regularity is a property of a single object while transversality relates to the
interaction of two or more independent objects''.
In spite of the relatively long history with many important features of subtransversality, useful applications of the property keep being discovered.
For example, Luke et al. \cite[Theorem 8]{LukTebNgu18}\footnote{The result was established in Euclidean spaces. Fortunately, its proof remains valid in the Hilbert space setting without changes.}    
very recently proved that subtransversality is not only sufficient but also necessary for linear convergence of convex alternating projections.
This complements the aforementioned result by Bauschke and Borwein \cite{BauBor93} obtained 25 years earlier.
Luke et al. \cite[Section 4]{LukTebNgu18} also revealed that the property has been imposed either explicitly or implicitly in all existing linear convergence criteria for nonconvex alternating projections, and hence conjectured that subtransversality is a necessary condition for linear convergence of the algorithm.

\item (\emph{transversality})
The origin of the concept of \emph{transversality} can be traced back to at least the 1970's \cite{GuiPol74,Hir76} in differential geometry which deals of course with smooth manifolds, where transversality of a pair of smooth manifolds $\{A,B\}$ at a common point $\bx$ can also be characterized by \eqref{tran_dual}\footnote{In this special setting, the normal cones appearing in \eqref{tran_dual} are subspaces which correspondingly coincide with the \emph{normal spaces} (i.e., orthogonal complements to the \emph{tangent spaces}) to the manifolds at $\bx$. Particularly, the minus sign in \eqref{tran_dual} can be omitted.}.
The property is known as a sufficient condition for the intersection $A\cap B$ to be also a smooth manifold around $\bx$.
To the best of our awareness, transversality of pairs (collections) of general sets in normed linear spaces was first investigated by Kruger in  a systematic picture of mutual arrangement properties of the sets.
The property has been known under quite a number of other names including \emph{regularity}, \emph{strong regularity}, \emph{property $(UR)_S$}, \emph{uniform regularity}, \emph{strong metric inequality} \cite{Kru05,Kru06,Kru09,KruTha13} and \textit{linear regular intersection} \cite{LewLukMal09}.
Plenty of primal and dual space characterizations of transversality (especially in the Euclidean space setting) as well as its close connections to important concepts in optimization and variational analysis such as \emph{weak sharp minima}, \emph{error bounds}, \emph{conditions involving primal and dual slopes}, \emph{metric regularity}, \emph{(extended) extremal principles} and other types of mutual arrangement properties of collections of sets have been established and extended to more general \emph{nonlinear settings} in a series of papers by Kruger and his collaborators \cite{KhaKruTha15,Kru15,Kru16,Kru15.2,Kru06,KruTha14,KruTha15,Kru05,Kru09}.
Apart from classical applications of the property, for example, as a sufficient condition for \emph{strong duality} to hold for convex optimization (Slater's condition) \cite{BorLew00,BoyVan04} or as a constraint qualification condition for establishing calculus rules for the limiting/Mordukhovich normal cones \cite[page 265]{Mor06.1} and coderivatives (in connection with \emph{metric regularity}, the counterpart of transversality in terms of set-valued mappings) \cite{RocWet98,DonRoc14}, important applications have also been emerging in the field of numerical analysis.
Lewis et al. \cite{LewMal08,LewLukMal09} applied the property to establish the first linear convergence criteria for nonconvex alternating and averaged projections.
Transversality was also used to prove linear convergence of the Douglas-Rachford algorithm \cite{HesLuk13,Pha16} and its relaxations \cite{Tha18}.
A practical application of these results is to the \emph{phase retrieval problem} where transversality is sufficient for linear convergence of alternating projections, the Douglas-Rachford algorithm and actually any convex combinations of the two algorithms \cite{ThaSolVer18}.
\end{enumerate}
We refer the reader to the recent surveys by Kruger et al. \cite{KruLukNgu17,KruLukNgu18} for a more comprehensive discussion about the two properties.
\end{remark}
	
A number of dual characterizations of transversality, especially in the Euclidean space setting, have been established \cite{Kru05,Kru06,Kru09,LewLukMal09,KruTha13,KruTha15,KruLukNgu18} and applied, for example, \cite{Mor06.1,LewLukMal09,Pha16,Tha18}.
The situation is very much different for subtransversality.
For collections of closed and convex sets, the following dual characterization of subtransversality is due to Kruger.

\begin{proposition}\label{p:dual_subtran_convex} \cite[Theorem 3]{Kru18}\footnote{The result is also valid in Banach spaces.}
A pair of closed and convex sets $\{A,B\}$ is subtransversal at a point $\bx\in A\cap B$ if and only if there exist numbers $\alpha\in]0,1[$ and $\delta>0$ such that $\|x^*_1+x^*_2\|>\alpha$
for all $a\in(A\setminus B)\cap \B_\de(\bx)$, $b\in(B\setminus A)\cap \B_\de(\bx)$, $x\in \B_\de(\bx)$ with $\|x-a\|=\|x-b\|$ and $x_1^*,x_2^*\in X^*$ satisfying 
\begin{gather*}
\dist(x_1^*,N_{A}(a))<\delta,\quad \dist(x_2^*,N_{B}(b))<\delta,\\
\|x^*_1\|+\|x^*_2\|=1,\quad
\langle x_1^*,x-a\rangle=\|x_1^*\|\,\|x-a\|,\;
\langle x_2^*,x-b\rangle=\|x_2^*\|\,\|x-b\|.
\end{gather*}
\end{proposition}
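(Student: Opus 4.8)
The plan is to prove the two implications separately. Throughout I use the standard facts for a closed convex set $\Omega$ in a Hilbert space: the metric projector $P_\Omega$ is single-valued, $x-P_\Omega(x)$ lies in the (convex) normal cone $N_\Omega(P_\Omega(x))$, and $\dist(\cdot,\Omega)$ is Fr\'echet differentiable at every $x\notin\Omega$ with gradient the \emph{normalized projection direction} $(x-P_\Omega(x))/\dist(x,\Omega)$.

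\emph{Necessity (subtransversality $\Rightarrow$ dual condition).} Suppose $\alpha_0\dist(x,A\cap B)\le\max\{\dist(x,A),\dist(x,B)\}$ for all $x\in\B_{\delta_0}(\bx)$, with $\alpha_0:=\str>0$, and let $\delta\in\,]0,\delta_0/2[\,$ be small (fixed below). Given $a,b,x,x_1^*,x_2^*$ as in the statement, set $r:=\|x-a\|=\|x-b\|$ (which is $>0$, since otherwise $x\in(A\setminus B)\cap(B\setminus A)=\emptyset$) and choose genuine normals $y_1^*\in N_A(a)$, $y_2^*\in N_B(b)$ with $\|x_i^*-y_i^*\|<\delta$. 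If $x\in A$, then combining $\langle y_1^*,x-a\rangle\le0$ with the alignment equality gives $\|x_1^*\|\,r=\langle x_1^*,x-a\rangle\le\langle y_1^*,x-a\rangle+\delta r\le\delta r$, so $\|x_1^*\|\le\delta$ and hence $\|x_1^*+x_2^*\|\ge\|x_2^*\|-\|x_1^*\|\ge1-2\delta$; symmetrically if $x\in B$. So assume $x\notin A\cup B$; then $D:=\dist(x,A\cap B)>0$ and $w:=P_{A\cap B}(x)$ satisfies $\|x-w\|=D$. Since $w\in A$ and $y_1^*\in N_A(a)$, we have $\langle y_1^*,x-w\rangle\ge\langle y_1^*,x-a\rangle\ge(\|x_1^*\|-\delta)r$, and similarly for the index $2$; adding and using $\|x_1^*\|+\|x_2^*\|=1$ yields $\langle y_1^*+y_2^*,x-w\rangle\ge(1-2\delta)r$, whence $\|y_1^*+y_2^*\|\ge(1-2\delta)r/D$. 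As $\dist(x,A)\le r$ and $\dist(x,B)\le r$, subtransversality gives $\alpha_0D\le r$, i.e. $r/D\ge\alpha_0$, so $\|x_1^*+x_2^*\|\ge\|y_1^*+y_2^*\|-2\delta\ge(1-2\delta)\alpha_0-2\delta$. Picking $\delta$ small enough that the last quantity is a positive number $2\alpha$ with $\alpha\in\,]0,1[\,$ finishes this direction.

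\emph{Sufficiency (dual condition $\Rightarrow$ subtransversality).} By contraposition, if $\{A,B\}$ is not subtransversal at $\bx$ there exist $x_n\to\bx$ with $d_n:=\dist(x_n,A\cap B)>0$ and $\max\{\dist(x_n,A),\dist(x_n,B)\}<d_n/n$. Apply Ekeland's variational principle to the (continuous, convex, nonnegative) function $f:=\max\{\dist(\cdot,A),\dist(\cdot,B)\}$ at $x_n$ with radius $\lambda_n:=d_n/2$: it produces $\hat x_n$ with $f(\hat x_n)\le f(x_n)$, $\|\hat x_n-x_n\|\le d_n/2$, and $\hat x_n$ a global minimizer of $f(\cdot)+\mu_n\|\cdot-\hat x_n\|$, where $\mu_n:=f(x_n)/\lambda_n<2/n\to0$. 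Then $\hat x_n\to\bx$ and $\dist(\hat x_n,A\cap B)\ge d_n/2>0$. I claim $\dist(\hat x_n,A)=\dist(\hat x_n,B)=:r_n>0$ for large $n$: if the two distances differed, near $\hat x_n$ the function $f$ would coincide with the larger one, say $\dist(\cdot,A)$, which is then positive and hence differentiable with unit gradient at $\hat x_n$ --- contradicting near-stationarity once $\mu_n<1$. Put $a_n:=P_A(\hat x_n)$, $b_n:=P_B(\hat x_n)$, so $\|\hat x_n-a_n\|=\|\hat x_n-b_n\|=r_n$, and since $\dist(\hat x_n,A\cap B)\ge d_n/2>r_n$ one gets $a_n\notin B$ and $b_n\notin A$. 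With both distance functions active and differentiable at $\hat x_n$, $\partial f(\hat x_n)$ is the segment joining $u_n:=(\hat x_n-a_n)/r_n\in N_A(a_n)$ and $v_n:=(\hat x_n-b_n)/r_n\in N_B(b_n)$; near-stationarity then yields $s_n\in[0,1]$ with $\|s_nu_n+(1-s_n)v_n\|\le\mu_n$. For large $n$, $\hat x_n\in\B_\delta(\bx)$, $a_n\in(A\setminus B)\cap\B_\delta(\bx)$, $b_n\in(B\setminus A)\cap\B_\delta(\bx)$, and $x_{1,n}^*:=s_nu_n$, $x_{2,n}^*:=(1-s_n)v_n$ meet all requirements of the dual condition ($x_{i,n}^*$ lies in the pertinent normal cone, $\|x_{1,n}^*\|+\|x_{2,n}^*\|=1$, the alignment equalities hold by construction), while $\|x_{1,n}^*+x_{2,n}^*\|\le\mu_n\to0$ --- contradicting $\|x_{1,n}^*+x_{2,n}^*\|>\alpha$.

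\emph{Main obstacle.} The crux is the sufficiency direction, and specifically the coordination inside Ekeland's principle: the radius $\lambda_n$ must be tied to $\dist(x_n,A\cap B)$ so that $\hat x_n$ is \emph{at once} close to $\bx$, at equal \emph{positive} distance from $A$ and $B$, still far from $A\cap B$ relative to $r_n$ (which is exactly what delivers $a_n\notin B$, $b_n\notin A$), and approximately stationary for $f$. Hand in hand with this goes the identification of $\partial\big(\max\{\dist(\cdot,A),\dist(\cdot,B)\}\big)$ at such a point with the segment between the two normalized projection directions --- this is the source of the dual pair. The necessity direction is comparatively routine, the only care being the bookkeeping of the $\delta$-slack in the conditions $\dist(x_1^*,N_A(a))<\delta$, $\dist(x_2^*,N_B(b))<\delta$ and the separate (easy) treatment of the degenerate case $x\in A\cup B$. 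Convexity is used essentially: single-valuedness and differentiability of the distance functions, and the fact that $x-P_\Omega(x)$ is a genuine normal.
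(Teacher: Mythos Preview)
The paper does not prove this proposition: it is quoted verbatim from \cite[Theorem~3]{Kru18} and used as a known result. There is therefore no ``paper's own proof'' to compare your argument against.

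That said, your argument is essentially correct in the Hilbert setting of the paper. The necessity direction is clean; the only cosmetic slip is the ``$2\alpha$'' at the end --- you can simply take $\alpha:=(1-2\delta)\alpha_0-2\delta$, which lies in $]0,1[$ for small $\delta$, and both cases then give the strict inequality $\|x_1^*+x_2^*\|>\alpha$ directly. In the sufficiency direction your Ekeland argument is sound: the choice $\lambda_n=d_n/2$ keeps $\hat x_n$ away from $A\cap B$ while $f(\hat x_n)\le f(x_n)<d_n/n$, which is exactly what forces $a_n\in A\setminus B$, $b_n\in B\setminus A$; the equality $\dist(\hat x_n,A)=\dist(\hat x_n,B)$ follows from near-stationarity once $\mu_n<1$, and the max-rule identification $\partial f(\hat x_n)=\conv\{u_n,v_n\}$ is legitimate because both distance functions are convex and differentiable at $\hat x_n\notin A\cup B$.

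One remark worth recording: your proof is genuinely Hilbertian --- single-valued projections, $x-P_\Omega(x)\in N_\Omega(P_\Omega(x))$, and Fr\'echet differentiability of $\dist(\cdot,\Omega)$ off $\Omega$ all enter essentially. The footnote in the paper notes that the result is valid in Banach spaces; the proof in \cite{Kru18} proceeds differently (via the quantitative machinery around the constant $\itrc$ and error-bound/subdifferential arguments that do not require uniqueness of projections). So your route is a legitimate, more elementary alternative in the Hilbert case, trading generality for transparency.
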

	
In the nonconvex setting, the first sufficient dual condition for subtransversality was formulated in \cite[Theorem 4.1]{KruTha14} following the routine of deducing metric subregularity characterizations for set-valued mappings in \cite{Kru15}.
The result was then refined successively in \cite[Theorem 4$(ii)$]{KruLukNgu18}, \cite[Theorem 2]{KruLukNgu17} and finally in \cite{Kru18} in the following form.

\begin{proposition}\label{p:suff_subtran} \cite[combination of Definition 2 and Corollary 2]{Kru18}\footnote{The result is also valid in Asplund spaces.} 
A pair of closed sets $\{A,B\}$ is subtransversal at a point $\bx\in A\cap B$ if there exist numbers $\alpha\in]0,1[$ and $\delta>0$ such that, for all $a\in(A\setminus B)\cap \B_\de(\bx)$, $b\in(B\setminus A)\cap \B_\de(\bx)$ and $x\in \B_\de(\bx)$
with $\norm{x-a}=\norm{x-b}$,
one has $\|x^*_1+x^*_2\|>\alpha$ for some $\eps>0$ and all $a'\in A\cap \B_\eps(a)$, $b'\in B\cap \B_\eps(b)$, $x_1'\in\B_\eps(a)$, $x_2'\in \B_\eps(b)$ with $\norm{x-x_1'}=\norm{x-x_2'}$, and $x_1^*,x_2^*\in X^*$ satisfying 
\begin{gather*}
\dist(x_1^*,N_{A}(a'))<\delta,\quad \dist(x_2^*,N_{B}(b'))<\delta,\\
\|x^*_1\|+\|x^*_2\|=1,\quad
\ang{x^*_1,x-x_1'}=\|x^*_1\|\|x-x_1'\|,\quad
\ang{x^*_2,x-x_2'}=\|x^*_2\|\|x-x_2'\|.
\end{gather*}
\end{proposition}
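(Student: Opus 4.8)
The plan is to recast subtransversality as a local error bound and to run an Ekeland-type variational argument, feeding the dual hypothesis through the fuzzy subdifferential calculus available because a Hilbert space is Asplund. Put $\varphi:=\max\{\dist(\cdot,A),\dist(\cdot,B)\}$; as $A$ and $B$ are closed, $\varphi$ is nonnegative, $1$-Lipschitz and $\{\varphi=0\}=A\cap B$, so by Definition \ref{D3} subtransversality of $\{A,B\}$ at $\bx$ is exactly a local error bound for $\varphi$ at $\bx$. Arguing by contradiction, assume $\str=0$; then there is $x_k\to\bx$ with $\varphi(x_k)>0$ and $\dist(x_k,A\cap B)/\varphi(x_k)\to\infty$. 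Ekeland's variational principle, used with the parameter $\varphi(x_k)$ and radius $\lambda_k:=\sqrt{\varphi(x_k)\dist(x_k,A\cap B)}$, produces $\hat x_k\to\bx$ with $\varphi(\hat x_k)>0$, $\hat x_k$ a global minimizer of $\varphi+\beta_k\|\cdot-\hat x_k\|$ for some $\beta_k\to0$, and $\dist(\hat x_k,A\cap B)/\varphi(\hat x_k)\to\infty$; the last property forces every metric projection onto $A$ (resp.\ onto $B$) of a point close to $\hat x_k$ to lie in $A\setminus B$ (resp.\ $B\setminus A$).

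I would then apply the generalized Fermat rule and the fuzzy sum rule to $\varphi+\beta_k\|\cdot-\hat x_k\|$ at $\hat x_k$, obtaining $w_k\to\bx$ with $\varphi(w_k)>0$ and $\xi_k\in\widehat\partial\varphi(w_k)$ satisfying $\|\xi_k\|\le\beta_k+o(1)$. Since every Fr\'echet subgradient of a distance function at a point outside the set has norm $1$, the alternative $\dist(w_k,A)\ne\dist(w_k,B)$, in which $\varphi$ locally coincides with a single distance function, would give $1\le\beta_k+o(1)$, which is absurd for large $k$; hence $\dist(w_k,A)=\dist(w_k,B)=:s_k>0$ eventually. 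Choosing $a_k\in P_A(w_k)$, $b_k\in P_B(w_k)$, the triple $(a_k,b_k,w_k)$ satisfies all premises of the hypothesis, which therefore supplies some $\eps_k>0$. Applying the fuzzy max rule to $\varphi=\max\{\dist(\cdot,A),\dist(\cdot,B)\}$ at $w_k$, with a tolerance $\tau_k$, yields $\lambda_k\in[0,1]$, points $u_k^A,u_k^B$ within $\tau_k$ of $w_k$ (lying outside $A$ and $B$), feet $a_k'\in P_A(u_k^A)$, $b_k'\in P_B(u_k^B)$, and unit vectors $\xi_k^A=(u_k^A-a_k')/\|u_k^A-a_k'\|\in N_A(a_k')$, $\xi_k^B=(u_k^B-b_k')/\|u_k^B-b_k'\|\in N_B(b_k')$ with $\|\lambda_k\xi_k^A+(1-\lambda_k)\xi_k^B\|\le\beta_k+O(\tau_k)\to0$. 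Setting $x_1^*:=\lambda_k\xi_k^A\in N_A(a_k')$ and $x_2^*:=(1-\lambda_k)\xi_k^B\in N_B(b_k')$ (so that $\|x_1^*\|+\|x_2^*\|=1$ and the $\delta$-tolerances on the normals hold trivially), and taking perturbation points $x_1',x_2'$ on the half-lines $x-t\xi_k^A$ and $x-t\xi_k^B$ with a common length $t$ (which enforces $\|x-x_1'\|=\|x-x_2'\|$) for a suitable $x$ near $\bx$, the hypothesis would give $\|x_1^*+x_2^*\|>\alpha$, contradicting $\|\lambda_k\xi_k^A+(1-\lambda_k)\xi_k^B\|\to0$. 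This contradiction establishes subtransversality.

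\textbf{Where the difficulty lies.} The one genuinely delicate point --- and the reason the statement carries its nested ``for some $\eps>0$'' together with two separate anchors $a',x_1'$ --- is that the output of the fuzzy calculus need not fit the rigid template of the hypothesis: metric projections onto $A$ and $B$ in a Hilbert space are discontinuous, so the feet $a_k',b_k'$ of $u_k^A,u_k^B\approx w_k$ may be far from the projections $a_k,b_k$ of $w_k$, and $\|w_k-a_k'\|$ and $\|w_k-b_k'\|$ agree only up to $O(\tau_k)$, whereas the hypothesis requires $\|x-a\|=\|x-b\|$ exactly. One can dispatch the easy regime $\|a_k'-b_k'\|$ small --- there $\xi_k^A$ and $\xi_k^B$ are nearly parallel, so $\|\lambda_k\xi_k^A+(1-\lambda_k)\xi_k^B\|$ is already close to $1$ --- and, in the complementary regime, anchor the hypothesis at $(a_k',b_k',x)$ with $x$ the projection of $\hat x_k$ onto the perpendicular bisector of $[a_k',b_k']$ (which lies $O(\tau_k)$ from $\hat x_k$ there); but one must still cope with the fact that $\eps_k$ is produced only after $\tau_k$ has been fixed, while the estimates need $\tau_k<\eps_k$ and $\eps_k$ has no lower bound in the data. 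The clean way around this --- essentially the content of \cite[Theorem 2]{KruLukNgu17} and \cite[Corollary 2]{Kru18}, on which the Proposition rests --- is to express the hypothesis as positivity of a \emph{stabilized} (approximate) slope that already absorbs the $\eps$-perturbations, so that the variational argument closes without this order-of-quantifiers obstruction. I expect that stabilization to be the crux of a fully rigorous proof.
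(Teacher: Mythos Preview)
The paper does not prove this proposition at all: it is quoted verbatim from \cite[Definition~2 and Corollary~2]{Kru18} and used as a known input to the subsequent analysis. So there is no ``paper's own proof'' to compare against; you are attempting to supply an argument for a result the authors import from the literature.

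Your sketch follows the standard route used in Kruger's papers on this topic: reformulate subtransversality as a local error bound for $\varphi=\max\{\dist(\cdot,A),\dist(\cdot,B)\}$, contradict it via Ekeland's principle to produce near-stationary points, and then invoke the Asplund fuzzy sum/max rules to manufacture approximate normals that should violate the dual hypothesis. That is indeed the architecture behind the results in \cite{KruTha14,Kru15,KruLukNgu17,Kru18} from which the proposition is drawn, and your identification of the two-case split (distances unequal versus equal) and the role of unit-norm subgradients of distance functions is correct.

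However, the proposal is not a proof: you yourself locate the genuine obstruction --- the order-of-quantifiers clash between the tolerance $\tau_k$ you must fix before applying the fuzzy calculus and the $\eps$ that the hypothesis only hands you afterwards, with no a priori lower bound --- and then resolve it by pointing back to \cite[Theorem~2]{KruLukNgu17} and \cite[Corollary~2]{Kru18}, which is precisely the citation the paper already gives for the proposition. In other words, the crux of your argument is deferred to the very references the statement is quoted from. A self-contained proof would have to carry out the ``stabilized slope'' reformulation explicitly: one shows that the nested $\eps$-layer in the hypothesis is exactly what allows the fuzzy-calculus output $(a',b',x_1',x_2',x_1^*,x_2^*)$ to be absorbed, by first fixing $(a,b,x)$ from an initial Ekeland step, obtaining the associated $\eps>0$, and only then running the second approximation with tolerance $\tau_k<\eps$. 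Your sketch gestures at this but does not execute it, and the perpendicular-bisector correction you propose for $x$ still does not guarantee that the resulting $a_k',b_k',x_1',x_2'$ land in the prescribed $\eps$-balls around the \emph{original} $a,b$ unless the two approximation scales are nested in the right order.
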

The inverse implication of Proposition \ref{p:suff_subtran} is unknown.
Our subsequent analysis particularly shows the negative answer to this question, see Remark \ref{r:suff but not ness}.

Compared to transversality and subtransversality, the intrinsic transversality property below appeared very recently.

\begin{definition}\label{d:intr_tran:DIL} \cite[Definition 3.1]{DruIofLew15}
A pair of closed sets $\{A,B\}$ in a Euclidean space is \emph{intrinsically transversal} at a point $\bx \in A \cap B$ if there exists an angle $\al > 0$ together with a number $\de>0$ such that any two points $a \in (A\setminus B) \cap \B_{\de}(\bx)$
and $b \in (B\setminus A) \cap \B_{\de}(\bx)$ cannot have difference $a-b$ simultaneously making an angle strictly less than $\al$ with both the proximal normal cones $N^p_B(b)$ and $-N^p_A(a)$.
\end{definition}

The above property was originally introduced in 2015 by Drusvyatskiy et al. \cite{DruIofLew15} as a sufficient condition for establishing local linear convergence of the alternating projection algorithm for solving nonconvex feasibility problems in Euclidean spaces.
As demonstrated by Ioffe \cite{Iof17}, Kruger et al. \cite{KruLukNgu17,Kru18} and will also be in this paper, intrinsic transversality turns out to be an important property itself in the field of variational anyalysis.
Kruger \cite{Kru18} recently extended and investiaged intrinsic transversality in more general underlying spaces.

\begin{definition}\label{d:intr_tran} 
\cite[Definition 2$(ii)$]{Kru18}\footnote{The property was defined and investigated in general normed linear spaces.}
A pair of closed sets $\{A,B\}$ is \emph{intrinsically transversal} at a point $\bx \in A \cap B$ if there exist numbers $\alpha\in]0,1[$ and $\delta>0$ such that $\|x^*_1+x^*_2\|>\alpha$
for all $a\in(A\setminus B)\cap \B_\de(\bx)$, $b\in(B\setminus A)\cap \B_\de(\bx)$, $x\in \B_\de(\bx)$ with
$x\ne a$, $x\ne b$, $1-\delta<\frac{\norm{x-a}}{\norm{x-b}}<1+\delta$, and $x_1^*\in N_{A}(a)\setminus\{0\}$, $x_2^*\in N_{B}(b)\setminus\{0\}$ satisfying
\begin{gather*}
\norm{x^*_1}+\norm{x^*_2}=1,\quad\frac{\ang{x_1^*,x-a}} {\|x_1^*\|\|x-a\|} >1-\delta,\quad
\frac{\ang{x_2^*,x-b}} {\|x_2^*\|\|x-b\|} >1-\delta.
\end{gather*}
\end{definition}

It is worth noting that the extension from Definition \ref{d:intr_tran:DIL} to Definition \ref{d:intr_tran} of intrinsic transversality is nontrivial and the coincidence of the two definitions in the Euclidean space setting was shown in \cite[Proposition 8$(iii)$]{Kru18}.

It was proved in \cite[Theorem 4]{Kru18} that intrinsic transversality implies the sufficient dual condition of subtransversality provided in Proposition \ref{p:suff_subtran}, which in turn implies the one stated in Proposition \ref{p:dual_subtran_convex}.
The following quantitative constants \cite{KruLukNgu17,Kru18} respectively characterizing the three
dual space properties will be convenient for our subsequent discussion and analysis\footnote{In \cite{Kru18}, the restrictions $x\neq a$ and $x\neq b$ were also used under the $\liminf$ of \eqref{itr constant}. We note that they are redundant due to the constraints $a\in A\setminus B$, $b\in B\setminus A$ and $\frac{\norm{x-a}}{\norm{x-b}}\to1$.}

\begin{align}\label{itr constant}
\itr:=&\liminf\limits_{\substack{
a\to\bar x,\; b\to\bar x,\; x\to\bar x\\	a\in A\setminus B,\;b\in B\setminus A,\;x\ne a,\;x\ne b\\ x^*_1\in N_{A}(a)\setminus\{0\},\;x^*_2\in N_{B}(b)\setminus\{0\},\;\norm{x^*_1}+\norm{x^*_2}=1
\\
\frac{\norm{x-a}}{\norm{x-b}}\to1,\; \frac{\ang{x^*_1,x-a}}{\norm{x^*_1}\norm{x-a}}\to1,\; \frac{\ang{x^*_2,x-b}}{\norm{x^*_2}\norm{x-b}}\to1}}\|x^*_1+x^*_2\|,
\\ \nonumber 
\itrd{w}:=& \lim_{\rho\downarrow 0} \inf_{\substack{a\in(A\setminus B)\cap \B_\rho(\bx),\;b\in(B\setminus A)\cap \B_\rho(\bx)\\x\in \B_\rho(\bx),\; \norm{x-a}=\norm{x-b}}}
\\ \notag
&\liminf_{\substack{x_1'\to a,\;x_2'\to b,\; a'\to a,\;b'\to b\\a'\in A,\;b'\in B,\; \|x-x_1'\|=\|x-x_2'\|\\ \dist(x_1^*,N_{A}(a'))<\rho,\; \dist(x_2^*,N_{B}(b'))<\rho,\; \|x_1^*\|+\|x_2^*\|=1
\\ \notag
\langle x_1^*,x-x_1'\rangle=\|x_1^*\|\,\|x-x_1'\|,\;
\langle x_2^*,x-x_2'\rangle=\|x_2^*\|\,\|x-x_2'\|}}
\|x^*_1+x^*_2\|,
\\\label{itrc constant}
\itrc:=& \liminf_{\substack{x\to\bx,\;a\to\bx,\;b\to\bx\\a\in A\setminus B,\;b\in B\setminus A,\; \norm{x-a}=\norm{x-b}\\ \dist(x_1^*,N_{A}(a))\to0,\;\dist(x_2^*,N_{B}(b))\to0,\; \|x_1^*\|+\|x_2^*\|=1
\\
\langle x_1^*,x-a\rangle=\|x_1^*\|\,\|x-a\|,\;
\langle x_2^*,x-b\rangle=\|x_2^*\|\,\|x-b\|}}
\|x^*_1+x^*_2\|,
\end{align}
with the convention that the infimum over the empty set equals 1.

In terms of these constants, intrinsic transversality and Propositions \ref{p:suff_subtran} and \ref{p:dual_subtran_convex} respectively admit more concise descriptions.

\begin{proposition}
Let $\{A,B\}$ be a pair of closed sets, and $\bx\in A\cap B$.
\begin{enumerate}
\item $\{A,B\}$ is intrinsically transversal at $\bar x$ if and only if $\itr>0$.
\item
$\{A,B\}$ is subtransversal at $\bar x$ if $\itrd{w}>0$.
\item If the sets $A,B$ are convex, then $\{A,B\}$ is subtransversal at $\bar x$ if and only if $\itrc>0$.
\end{enumerate}
\end{proposition}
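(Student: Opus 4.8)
The plan is to obtain all three assertions by unwinding the definitions of the constants $\itr$, $\itrd{w}$ and $\itrc$ and comparing them with the quantitative conditions already available: intrinsic transversality as formulated in Definition~\ref{d:intr_tran}, the sufficient dual condition for subtransversality in Proposition~\ref{p:suff_subtran}, and the dual characterization of subtransversality of convex sets in Proposition~\ref{p:dual_subtran_convex}. The only tool needed is the elementary reformulation: for a nonnegative quantity $g$ depending on a configuration $\xi$ that ranges over a family $\mathcal C_\de$ shrinking as $\de\downarrow 0$ (i.e.\ $\mathcal C_{\de'}\subseteq\mathcal C_\de$ whenever $0<\de'\le\de$), the corresponding generalized lower limit equals $\sup_{\de>0}\inf_{\xi\in\mathcal C_\de}g(\xi)$, and
\[
\sup_{\de>0}\inf_{\xi\in\mathcal C_\de}g(\xi)>0\quad\Longleftrightarrow\quad\exists\,\al\in\,]0,1[,\ \de>0:\ g(\xi)>\al\ \text{for all}\ \xi\in\mathcal C_\de .
\]
For ``$\Rightarrow$'' one takes $\al:=\tfrac12\sup_{\de>0}\inf_{\xi\in\mathcal C_\de}g(\xi)$, which lies in $]0,1[$ since this supremum is at most $1$ in all cases below, and picks $\de$ with $\inf_{\xi\in\mathcal C_\de}g(\xi)>\al$; ``$\Leftarrow$'' is immediate. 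With the stated convention that an infimum over the empty set equals $1$, the equivalence persists (trivially on both sides) when $\mathcal C_\de$ is eventually empty, so the degenerate situations --- such as $\bx\notin\overline{A\setminus B}$ or $\bx\notin\overline{B\setminus A}$, where the quantitative conditions in Definition~\ref{d:intr_tran} and Propositions~\ref{p:suff_subtran} and~\ref{p:dual_subtran_convex} hold vacuously --- need no separate discussion.

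For assertion~(i) I would take $g(\xi)=\norm{x_1^*+x_2^*}$ with $\xi=(a,b,x,x_1^*,x_2^*)$ and let $\mathcal C_\de$ be the set of tuples satisfying $a\in(A\setminus B)\cap\B_\de(\bx)$, $b\in(B\setminus A)\cap\B_\de(\bx)$, $x\in\B_\de(\bx)$, $x\ne a$, $x\ne b$, $1-\de<\norm{x-a}/\norm{x-b}<1+\de$, $x_1^*\in N_A(a)\setminus\{0\}$, $x_2^*\in N_B(b)\setminus\{0\}$, $\norm{x_1^*}+\norm{x_2^*}=1$, $\ang{x_1^*,x-a}>(1-\de)\norm{x_1^*}\norm{x-a}$ and $\ang{x_2^*,x-b}>(1-\de)\norm{x_2^*}\norm{x-b}$. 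Reading each ``$\to$'' condition in the definition of $\itr$ as the corresponding ``within $\de$'' inequality, one sees that $\sup_{\de>0}\inf_{\xi\in\mathcal C_\de}g(\xi)=\itr$; the displayed equivalence then says precisely that $\itr>0$ if and only if the condition of Definition~\ref{d:intr_tran} holds, i.e.\ if and only if $\{A,B\}$ is intrinsically transversal at $\bx$.

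Assertions~(ii) and~(iii) are of the same nature. For~(iii), letting $\mathcal C_\de$ consist of the tuples with $a\in(A\setminus B)\cap\B_\de(\bx)$, $b\in(B\setminus A)\cap\B_\de(\bx)$, $x\in\B_\de(\bx)$, $\norm{x-a}=\norm{x-b}$, $\dist(x_1^*,N_A(a))<\de$, $\dist(x_2^*,N_B(b))<\de$, $\norm{x_1^*}+\norm{x_2^*}=1$, $\ang{x_1^*,x-a}=\norm{x_1^*}\norm{x-a}$ and $\ang{x_2^*,x-b}=\norm{x_2^*}\norm{x-b}$ gives $\sup_{\de>0}\inf_{\xi\in\mathcal C_\de}g(\xi)=\itrc$, so $\itrc>0$ if and only if the condition in Proposition~\ref{p:dual_subtran_convex} holds, which for closed convex $A,B$ is equivalent to subtransversality of $\{A,B\}$ at $\bx$. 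For~(ii) the configuration has the two-level structure of $\itrd{w}$: an outer variable $(a,b,x)$ with $a,b,x\in\B_\de(\bx)$ and $\norm{x-a}=\norm{x-b}$, and an inner variable $(a',b',x_1',x_2',x_1^*,x_2^*)$ subject to $a'\to a$, $b'\to b$, $x_1'\to a$, $x_2'\to b$, $\norm{x-x_1'}=\norm{x-x_2'}$, $\dist(x_1^*,N_A(a'))<\de$, $\dist(x_2^*,N_B(b'))<\de$, $\norm{x_1^*}+\norm{x_2^*}=1$ and the two alignment equalities; $\itrd{w}$ is the outer infimum of the inner generalized lower limit of $g$. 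Matching this, level by level, with the quantifier pattern ``$\exists\de\;\forall(a,b,x)\;\exists\eps\;\forall(a',b',x_1',x_2',x_1^*,x_2^*)$'' of Proposition~\ref{p:suff_subtran} shows that $\itrd{w}>0$ is equivalent to the sufficient dual condition stated there, which by that proposition implies subtransversality of $\{A,B\}$ at $\bx$.

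The step I expect to be the main obstacle --- more a matter of care than of real difficulty --- is this last alignment of quantifiers for $\itrd{w}$: one must verify that the nested $\lim_{\rho\downarrow0}\inf\,\liminf$ in its definition corresponds, quantifier for quantifier, to the nested ``$\exists\de$ / $\forall(a,b,x)$ / $\exists\eps$ / $\forall$'' in Proposition~\ref{p:suff_subtran}, noting in particular that the single radius $\rho$ (resp.\ $\de$) plays simultaneously the role of the outer neighbourhood size and that of the proximal-distance threshold in the two formulations, so the two usages match rather than conflict. Granting this, all three statements follow at once from the cited definition and propositions.
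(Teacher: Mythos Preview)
Your proposal is correct and matches the paper's intent: the paper does not give a separate proof of this proposition, treating it as an immediate restatement of Definition~\ref{d:intr_tran} and Propositions~\ref{p:suff_subtran} and~\ref{p:dual_subtran_convex} in terms of the constants $\itr$, $\itrd{w}$, $\itrc$, and your argument is precisely this unwinding of the $\liminf$/$\lim\inf\liminf$ definitions into the corresponding $\exists\alpha,\delta$ conditions. Your care with the nested quantifier structure for $\itrd{w}$, and with the dual role of $\rho$ (resp.\ $\delta$) as both the outer radius and the distance threshold, is exactly the bookkeeping the paper leaves implicit.
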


The quantitative relationships amongst the five characterization constants defined at Definition \ref{D3} and expressions \eqref{itr constant}--\eqref{itrc constant} are as follows.

\begin{proposition}\label{p:relation_cons} \cite[Proposition 1]{Kru18}
Let $\{A,B\}$ be a pair of closed sets and $\bx\in A\cap B$.
\begin{enumerate}
\item $0 \le \tr \le \itr \le \itrd{w} \le \itrc \le 1$.\footnote{The statement is valid in Banach spaces.}
\item $\itrd{w} \le \str$.\footnote{The statement is valid in Asplund spaces.}
\item If $A$ and $B$ are convex, then $\itrc = \str$.
\item If $\dim X<\infty$ and $A,B$ are convex, then $\itrd{w} = \itrc = \str$.
\end{enumerate}
\end{proposition}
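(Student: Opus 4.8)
The plan is to peel off the chain in part~(i) first, since all of it except $\tr\le\itr$ is formal, and then to treat part~(ii) by a variational argument in Asplund spaces and parts~(iii)--(iv) by convex duality. In~(i) the extreme bounds $0\le\tr$ and $\itrc\le1$ are immediate (the empty-supremum convention, and $\norm{x_1^*+x_2^*}\le\norm{x_1^*}+\norm{x_2^*}=1$ together with the empty-infimum convention). The two middle inequalities $\itr\le\itrd{w}\le\itrc$ I would obtain by a routine comparison of the admissible tuples: every tuple entering the definition of the larger constant yields, after an arbitrarily small perturbation and a diagonal extraction as the governing radius shrinks, a tuple entering the definition of the smaller constant with almost the same value of $\norm{x_1^*+x_2^*}$. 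For $\itrd{w}\le\itrc$ one specializes the inner limit in the definition of $\itrd{w}$ to $a'=a$, $b'=b$, $x_1'=a$, $x_2'=b$, which is legitimate precisely because the definition of $\itrc$ imposes $\norm{x-a}=\norm{x-b}$; for $\itr\le\itrd{w}$ one replaces a vector $x_1^*$ with $\dist(x_1^*,N_A(a'))<\rho$ by a genuine element of $N_A(a')$ within $\rho$ of it, keeps $a'$, $b'$ in $A\setminus B$ and $B\setminus A$ using closedness of $B$ and $A$, and checks that the alignment ratios and $\norm{x-a'}/\norm{x-b'}\to1$ survive.

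The inequality $\tr\le\itr$ is the one with geometric content, and I would route it through the necessary dual condition for transversality: if $\{A,B\}$ is transversal at $\bx$ with constant $\alpha$, then $\norm{x_1^*+x_2^*}\ge\alpha$ for all $a\in A$, $b\in B$ near $\bx$ and all $x_1^*\in N_A(a)$, $x_2^*\in N_B(b)$ with $\norm{x_1^*}+\norm{x_2^*}=1$. To prove this I would fix a small $\eps>0$, use Fr\'echet normality to get a radius on which $\ang{x_1^*,y-a}\le\eps\norm{y-a}$ and $\ang{x_2^*,y-b}\le\eps\norm{y-b}$, pick $t_1,t_2>0$ small with $t_1\norm{x_1^*}=t_2\norm{x_2^*}$ (equalizing the two distances below), and test \eqref{D1-2} at $\bx$ with the translations $x_1:=a-\bx+t_1x_1^*$ and $x_2:=b-\bx+t_2x_2^*$; then $\max\{\dist(\bx,A-x_1),\dist(\bx,B-x_2)\}\le t_1\norm{x_1^*}$, and adding the two Fr\'echet inequalities at a point $w$ of the translated intersection forces $\dist(\bx,(A-x_1)\cap(B-x_2))\ge t_1\norm{x_1^*}(1-2\eps)/(\norm{x_1^*+x_2^*}+2\eps)$, so \eqref{D1-2} gives $\alpha\le\norm{x_1^*+x_2^*}+O(\eps)$. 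Every tuple entering the definition of $\itr$ satisfies in particular the hypotheses of this dual condition, so $\itr\ge\tr$.

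For part~(ii), with $X$ Asplund, I would fix $\alpha<\itrd{w}$ and prove subtransversality with constant $\alpha$, refining the proof of Proposition~\ref{p:suff_subtran}. Starting from a sequence $x_k\to\bx$, $x_k\notin A\cap B$, with $\alpha\dist(x_k,A\cap B)>\max\{\dist(x_k,A),\dist(x_k,B)\}$, Ekeland's variational principle applied to $y\mapsto\max\{\dist(y,A),\dist(y,B)\}$ on a small ball around $x_k$ yields a nearby point carrying a subgradient of norm $<\alpha$, and the fuzzy max rule together with the fuzzy sum rule for distance functions rewrites this subgradient as $x_{1,k}^*+x_{2,k}^*$ with $x_{i,k}^*$ within $\rho$ of Fr\'echet normal cones at nearby points, $\norm{x_{1,k}^*}+\norm{x_{2,k}^*}=1$, the alignment identities and the equidistance $\norm{x_k-x_{1,k}'}=\norm{x_k-x_{2,k}'}$ (after moving to a point where both distances are active), and $\norm{x_{1,k}^*+x_{2,k}^*}<\alpha$; this family contradicts $\itrd{w}>\alpha$. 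For parts~(iii)--(iv), convexity makes projections single-valued, gives $x-P_\Omega(x)\in N_\Omega(P_\Omega(x))$, and makes the alignment identity of $\itrc$ entail $a=P_A(x)$ and $b=P_B(x)$, hence $\dist(x,A)=\norm{x-a}$ and $\dist(x,B)=\norm{x-b}$. Then $\str\le\itrc$: for a tuple entering the definition of $\itrc$ near $\bx$ one has $\dist(x,A)=\norm{x-a}=\norm{x-b}=\dist(x,B)$, while $\ang{x_1^*+x_2^*,x-w}\ge\ang{x_1^*,x-a}+\ang{x_2^*,x-b}=(\norm{x_1^*}+\norm{x_2^*})\norm{x-a}=\norm{x-a}$ for all $w\in A\cap B$ yields $\dist(x,A\cap B)\ge\norm{x-a}/\norm{x_1^*+x_2^*}$, and combining with subtransversality gives $\alpha\le\norm{x_1^*+x_2^*}$ up to terms vanishing with $\dist(x_1^*,N_A(a))$ and $\dist(x_2^*,N_B(b))$. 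The reverse $\itrc\le\str$ is the quantitative sufficiency in Proposition~\ref{p:dual_subtran_convex}. Finally, for~(iv): in finite dimension a sequence nearly realizing the inner limit in the definition of $\itrd{w}$ has a subsequence along which $a_k'$, $b_k'$, $x_{1,k}'$, $x_{2,k}'$, $x_{1,k}^*$, $x_{2,k}^*$ all converge, and the limit is an exact tuple feeding $\itrc$ (here one uses outer semicontinuity of the limiting normal cone, which for convex sets coincides with $N_\Omega$); the computation just given makes its value $\ge\str$, so the inner limit is $\ge\str$ and hence $\itrd{w}\ge\str$, which with~(i)--(iii) gives $\itrd{w}=\itrc=\str$.

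The step I expect to be the main obstacle is part~(ii). It genuinely needs the fuzzy sum and max rules, hence the Asplund hypothesis, and more delicately it must carry the exact constant $\alpha$ through Ekeland's principle, through the fuzzy-calculus error terms, and through the outer/inner nesting in the definition of $\itrd{w}$, so that one lands on the sharp inequality $\itrd{w}\le\str$ rather than on an estimate with a multiplicative loss. The parallel control of lower-order terms in the limit passage for~(iv), and the identification in~(iii) of the optimal constant in Proposition~\ref{p:dual_subtran_convex} with $\itrc$, are less demanding but also require attention.
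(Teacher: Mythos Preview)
The paper does not contain a proof of this proposition: it is stated with the citation \cite[Proposition~1]{Kru18} and used as a black box throughout. So there is no ``paper's own proof'' to compare your proposal against; the result is imported from Kruger's 2018 paper.

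Viewed on its own merits, your outline is broadly on target and correctly identifies part~(ii) as the step requiring genuine work (Ekeland plus fuzzy calculus in Asplund spaces, with careful bookkeeping of constants to land on the sharp inequality). A couple of points deserve tightening. In part~(iii), your claim that the alignment identity in $\itrc$ forces $a=P_A(x)$ is not quite right as stated: the definition of $\itrc$ only gives $\dist(x_1^*,N_A(a))\to0$, not $x_1^*\in N_A(a)$, so $x-a$ is only \emph{approximately} normal. Fortunately you do not need $a=P_A(x)$; the trivial bound $\dist(x,A)\le\norm{x-a}$ suffices to feed the subtransversality inequality, and the error from $\dist(x_i^*,N_\Omega)$ in the estimate $\ang{x_1^*+x_2^*,x-w}\ge\norm{x-a}-O(\norm{e_1}+\norm{e_2})\norm{x-w}$ is controllable since $\norm{x-w}\le\alpha^{-1}\norm{x-a}$. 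Also, in your sketch of $\itr\le\itrd{w}$, replacing $x_i^*$ by a nearby genuine normal perturbs $\norm{x_1^*}+\norm{x_2^*}$ away from~$1$; you need a renormalization and a check that the alignment ratios still tend to~$1$ after that. These are routine fixes, and your plan would go through with them in place.
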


\begin{remark}[about notation and terminology]
	It is clear from Proposition \ref{p:relation_cons}$(i)$ that the strict inequality $\itrd{w}>0$ characterizes a weaker dual property than intrinsic transversality.
	That property is indeed called \emph{weak} intrinsic transversality in \cite{KruLukNgu17,Kru18}.
	This somehow explains why the letter ``$w$'' has been used in the notation $\itrd{w}$.
	Similarly, the strict inequality $\itrc>0$ also characterizes some weaker dual property than (weak) intrinsic transversality.
	Such a property has not been named yet, and it has played an important role in the analysis of transversality-type properties mainly in the \emph{convex} setting \cite{Kru18}.
	This somehow explains why the letter ``$c$'' has been used in the notation $\itrc$.
	Since one of the main results of this paper (Corollary \ref{c:equivalence}) reveals that these two constants do coincide with $\itr$ in the Hilbert space setting and, as a result, they characterize the same property - intrinsic transversality, we choose to keep as a minimum number of terminologies as possible in this paper for clarification.
	It is worth emphasizing that in the general normed linear space setting, such a coincidence remains as a challenging open question and it is natural to treat those properties characterized by the constants $\itrd{w}$ and $\itrc$ independently and as importantly as the intrinsic transversality property, see \cite{Kru18}. 
\end{remark}	

We are now ready to formulate one of the main results of this paper.
The statement and its proof is rather technical, and its meaningful consequences will be clarified subsequently.

\begin{theorem}[key estimate]\label{t:question 3}
	Let $\{A,B\}$ be a pair of closed sets and $\bx\in A\cap B$.
It holds that
\begin{equation}\label{quan_est}
\min\left\{\itrd{c},1/\sqrt{2}\right\} \le \itr.
\end{equation}
\end{theorem}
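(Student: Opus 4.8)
The plan is to show that if $\itrc \ge \eta$ for some $\eta \in (0, 1/\sqrt{2}]$, then $\itr \ge \eta$, which gives \eqref{quan_est} since trivially $\itr$ cannot be compared the other way around without this work. Recall from Proposition~\ref{p:relation_cons}$(i)$ that $\itr \le \itrc$ always, so the content is the reverse bound up to the threshold $1/\sqrt 2$. I would argue by taking a sequence realizing the $\liminf$ in \eqref{itr constant}: points $a_k \in A\setminus B$, $b_k \in B\setminus A$, $x_k \to \bx$ with $a_k,b_k\to\bx$, normals $x_{1,k}^*\in N_A(a_k)\setminus\{0\}$, $x_{2,k}^*\in N_B(b_k)\setminus\{0\}$ with $\|x_{1,k}^*\|+\|x_{2,k}^*\|=1$, such that $\frac{\|x_k-a_k\|}{\|x_k-b_k\|}\to 1$, $\frac{\ang{x_{1,k}^*,x_k-a_k}}{\|x_{1,k}^*\|\|x_k-a_k\|}\to 1$, $\frac{\ang{x_{2,k}^*,x_k-b_k}}{\|x_{2,k}^*\|\|x_k-b_k\|}\to 1$, and $\|x_{1,k}^*+x_{2,k}^*\|\to\itr$. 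The goal is to manufacture, from this data, admissible test configurations for the constant $\itrc$ in \eqref{itrc constant} — which require the \emph{exact} equalities $\|x-a\|=\|x-b\|$ and $\ang{x_i^*,x-a_i}=\|x_i^*\|\|x-a_i\|$ (exact alignment) rather than the approximate versions — whose value $\|x_1^*+x_2^*\|$ is controlled by $\|x_{1,k}^*+x_{2,k}^*\|$ plus an error that vanishes, thereby forcing $\itrc \le \itr$ on the relevant range.

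The key geometric step, and where the Hilbert space structure enters essentially, is the following correction. Given $x_k$ and the near-aligned pair $(a_k, x_{1,k}^*)$, I would replace $x_k$ by the point $\widehat x_k$ on the ray $a_k + \Rp x_{1,k}^*$ that is the metric projection of... more precisely, I want a new reference point $y_k$ equidistant from $a_k$ and $b_k$ such that $y_k - a_k$ is a positive multiple of $x_{1,k}^*$ and $y_k - b_k$ is a positive multiple of $x_{2,k}^*$, or failing that, such that the alignment defects are exactly zero after adjusting $x_{1,k}^*, x_{2,k}^*$ slightly. The cleanest route: keep the normals fixed and move the point. Set $t_k := \frac{\ang{x_{1,k}^*, x_k - a_k}}{\|x_{1,k}^*\|^2}$ and consider $\widehat x_k := a_k + t_k x_{1,k}^*$, so that $\ang{x_{1,k}^*, \widehat x_k - a_k} = \|x_{1,k}^*\|\|\widehat x_k - a_k\|$ holds exactly. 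The near-alignment hypothesis gives $\|\widehat x_k - x_k\| = o(\|x_k - a_k\|)$, since $x_k - a_k$ makes angle $\to 0$ with $x_{1,k}^*$ in the Hilbert space (Pythagoras: $\|x_k-a_k\|^2 = \|\widehat x_k - a_k\|^2 + \|x_k - \widehat x_k\|^2$ with the first term $\to\|x_k-a_k\|^2$). One does the symmetric construction from the $b$-side to get $\check x_k$, and the factor $1/\sqrt 2$ will appear when reconciling the two one-sided corrections with the equidistance constraint $\|y_k - a_k\| = \|y_k - b_k\|$ — because balancing a perturbation split between two nearly-orthogonal directions costs at worst a factor $\sqrt 2$ in the displacement relative to the base distances $\|x_k - a_k\|\approx\|x_k-b_k\|$, and one needs the resulting moved point to still lie in $\B_\rho(\bx)$ and to keep the quotient of distances controlled; the bound $1/\sqrt 2$ is exactly the regime where the induced change in $\|x_1^*+x_2^*\|$ stays below the target.

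After the correction I would check that the perturbed data $(\widehat a_k, \widehat b_k, y_k, x_{1,k}^*, x_{2,k}^*)$ — possibly with $\widehat a_k = a_k$, $\widehat b_k = b_k$ and only $y_k$ moved, or with the normals replaced by nearby vectors in $N_A(a_k), N_B(b_k)$ to restore exact alignment while staying within $\dist(\cdot, N_A(a_k)) \to 0$ — is admissible in the infimum defining $\itrc$: the equidistance is exact by construction, the alignment equalities are exact, $\|x_{1,k}^*\|+\|x_{2,k}^*\|=1$ is preserved (or renormalized with negligible effect), and $a_k\in A\setminus B$, $b_k \in B\setminus A$ are unchanged. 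Then $\itrc \le \liminf_k \|x_{1,k}^* + x_{2,k}^*\| = \itr$, and combined with $\itr \le \itrc$ from Proposition~\ref{p:relation_cons}$(i)$ on the range where $\itrc \le 1/\sqrt2$, we get $\min\{\itrc, 1/\sqrt2\} \le \itr$. The main obstacle I anticipate is bookkeeping the two simultaneous one-sided corrections: moving the reference point to align with $x_{1,k}^*$ destroys the (approximate) alignment with $x_{2,k}^*$ and the equidistance, so one must solve a small coupled system — essentially projecting $x_k$ onto the intersection of an affine subspace (equidistant hyperplane, or a thin slab) with two rays — and show the combined displacement is $O(\sqrt2 \cdot o(\|x_k - a_k\|))$ while the angle quotients all converge to $1$; verifying that this is consistent and that no configuration is lost (the "$x\ne a, x\ne b$" and the open-ball constraints survive in the limit) is the delicate part, and it is precisely there that the constant $1/\sqrt2$ is forced.
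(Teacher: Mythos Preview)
Your overall direction --- correct an approximate $\itr$-configuration into an exact $\itrc$-configuration and read off $\itrc\le\itr$ --- is the right one, and it is also the paper's strategy. But two concrete pieces are missing, and your account of where the constant $1/\sqrt{2}$ comes from is incorrect.

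First, the correction you propose is over-determined. You suggest moving $x_k$ onto the ray $a_k+\Rp x_{1,k}^*$ and, symmetrically, onto $b_k+\Rp x_{2,k}^*$, then ``reconciling'' with the equidistant constraint $\|y_k-a_k\|=\|y_k-b_k\|$. Generically there is no point satisfying all three simultaneously. The workable device (and the one the paper uses) is different: project $x$ only onto the perpendicular bisector of $[a,b]$ to get $x'$ with $\|x'-a\|=\|x'-b\|$, keep $a,b$ fixed, and then \emph{replace the vectors}, not the point, by
\[
{x_1^*}':=\frac{\|x_1^*\|}{\|x'-a\|}(x'-a),\qquad {x_2^*}':=\frac{\|x_2^*\|}{\|x'-b\|}(x'-b),
\]
which are exactly aligned with $x'-a$ and $x'-b$ by construction. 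These ${x_i^*}'$ are \emph{not} in $N_A(a)$, $N_B(b)$; they are only close to $x_i^*\in N_A(a),N_B(b)$, and that is precisely what the $\itrc$ definition allows via the conditions $\dist(x_1^*,N_A(a))<\delta$, $\dist(x_2^*,N_B(b))<\delta$. Your parenthetical ``or with the normals replaced by nearby vectors in $N_A(a_k), N_B(b_k)$'' has the containment backwards and misses this point.

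Second, and more seriously, you are missing a case split on the sign of $\ang{x-a,x-b}$, and this is the actual source of $1/\sqrt{2}$. The correction above requires controlling $\|x-x'\|/\|x'-a\|$, and the needed estimates (e.g.\ $\|x'-b\|^2-\|x-b\|^2\ge\|x'-x\|^2$) use $\ang{x-a,x-b}\le 0$. When instead $\ang{x-a,x-b}>0$, the correction argument does not go through; but in that regime one argues \emph{directly}, without invoking $\itrc$ at all: the unit vectors $\tfrac{x-a}{\|x-a\|}$ and $\tfrac{x-b}{\|x-b\|}$ are at distance $<\sqrt{2}$, hence so (up to $o(1)$) are $\tfrac{x_1^*}{\|x_1^*\|}$ and $\tfrac{x_2^*}{\|x_2^*\|}$, which gives $\ang{x_1^*,x_2^*}\ge -o(1)\|x_1^*\|\|x_2^*\|$ and therefore
\[
\|x_1^*+x_2^*\|^2 = 1-2\|x_1^*\|\|x_2^*\|+2\ang{x_1^*,x_2^*}\ge \tfrac12 - o(1).
\]
That is where $1/\sqrt{2}$ enters: it is the automatic lower bound on $\|x_1^*+x_2^*\|$ in the acute-angle case, not a ``$\sqrt{2}$ cost'' of balancing two perturbations as you suggest. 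Without this split your scheme has no mechanism for producing the threshold, and the vague sentence about ``nearly-orthogonal directions'' does not substitute for it.
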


\begin{proof}
	To proceed with the proof, let us suppose that $\itrd{c}>0$ since there is nothing to prove in the case $\itrd{c}=0$.
	Let us fix an arbitrary number
	\begin{equation}\label{be}
	\be \in \left]0,\min\left\{\itrd{c},1/\sqrt{2}\right\}\right[
	\end{equation}
	and prove that $\itr\ge \be$.
	By the definition \eqref{itrc constant} of $\itrd{c}$, there exist numbers
	\begin{equation*}
	\alpha\in \left]\be,\min\left\{\itrd{c},1/\sqrt{2}\right\}\right[
	\end{equation*}
	and $\delta>0$ such that, for all $a\in (A\setminus B)\cap \Ball_{\delta}(\bx)$, $b\in (B\setminus A)\cap \Ball_{\delta}(\bx)$ and $x\in \Ball_{\delta}(\bx)$ with $\|x-a\|=\|x-b\|$, one has
	\begin{equation}\label{>al}
	\|x^*_1+x^*_2\|>\alpha
	\end{equation}
	for all $x_1^*,x_2^*\in X^*$ satisfying
	\begin{align}\label{11}
	&\dist(x_1^*,N_A(a))<\delta,\quad \dist(x_2^*,N_B(b))<\delta,\\
	\label{13}
	\|x_1^*\|+\|x_2^*\|=1,\quad &\ip{x_1^*}{x-a} = \|x_1^*\|\|x-a\|,\quad \ip{x_2^*}{x-b} = \|x_2^*\|\|x-b\|.
	\end{align}
	Choose a number $\de'\in \,[0,\de/3[$ and satisfying
	\begin{align}\label{del'}
	&2\paren{\sqrt{\de'}+\de'} < \frac{1}{2} -\be^2,
	\\\label{del'2}
	\sqrt{2\de'} + &2\sqrt{\frac{2\de'-\de'^2}{4-6\de'+3\de'^2}} < \min\left\{\de,\al-\be\right\}.
	\end{align}
	Such a number $\de'$ exists since $\frac{1}{2} -\be^2>0$, $\min\left\{\de,\al-\be\right\}>0$ and
	\begin{equation*}
	\lim_{t\downarrow 0}2\paren{\sqrt{t}+t}=0,\;\quad  \lim_{t\downarrow 0}\paren{\sqrt{2t} + 2\sqrt{\frac{2t-t^2}{4-6t+3t^2}}}=0.
	\end{equation*}
	We are going to prove $\itr\ge \be$ with the technical constant $\de'>0$.
	To begin, let us take any $a\in (A\setminus B)\cap \Ball_{\delta'}(\bx)$, $b\in (B\setminus A)\cap \Ball_{\delta'}(\bx)$ and $x\in \Ball_{\delta'}(\bx)$ with $x\neq a$, $x\neq b$,
	\begin{equation}\label{1 mp delta}
	1-\de' <\frac{\|x-a\|}{\|x-b\|} < 1+ \de',
	\end{equation}
	and $x_1^*\in N_A(a)\setminus \{0\}$, $x_2^*\in N_B(b)\setminus \{0\}$ satisfying
	\begin{align}\label{13'}
	\|x_1^*\|+\|x_2^*\|=1,\quad \frac{\ip{x_1^*}{x-a}}{\|x_1^*\|\|x-a\|}>1-\de',\quad \frac{\ip{x_2^*}{x-b}}{\|x_2^*\|\|x-b\|}>1-\de'.
	\end{align}
	All we need is to show that
	\begin{equation}\label{conclusion}
	\|x_1^*+x_2^*\| > \be.
	\end{equation}
	We first observe from \eqref{13'} that
	\begin{equation}\label{gap_2}
	\begin{aligned}
	\norm{\frac{x_1^*}{\|x_1^*\|}-\frac{x-a}{\|x-a\|}}^2 = 2 -2\frac{\ip{x_1^*}{x-a}}{\|x_1^*\|\|x-a\|} < 2-2(1-\de') = 2\de',\\
	\norm{\frac{x_2^*}{\|x_2^*\|}-\frac{x-b}{\|x-b\|}}^2 = 2 -2\frac{\ip{x_2^*}{x-b}}{\|x_2^*\|\|x-b\|} < 2-2(1-\de') = 2\de'.
	\end{aligned}
	\end{equation}
	We take care of two possibilities concerning the value of $\ip{x-a}{x-b}$ as follows.
	
	\emph{Case 1.} $\ip{x-a}{x-b}>0$. Then
	\begin{equation*}
	\norm{\frac{x-a}{\|x-a\|}-\frac{x-b}{\|x-b\|}}^2 = 2 - 2\frac{\ip{x-a}{x-b}}{\|x-a\|\|x-b\|} < 2.
	\end{equation*}
	Equivalently,
	\begin{equation}\label{gap_1}
	\norm{\frac{x-a}{\|x-a\|}-\frac{x-b}{\|x-b\|}} < \sqrt{2}.
	\end{equation}
	By the triangle inequality and estimates \eqref{gap_1}, \eqref{gap_2}, we get that
	\begin{equation*}
	\begin{aligned}
	\norm{\frac{x_1^*}{\|x_1^*\|}-\frac{x_2^*}{\|x_2^*\|}} &\le \norm{\frac{x-a}{\|x-a\|}-\frac{x-b}{\|x-b\|}} + \norm{\frac{x_1^*}{\|x_1^*\|}-\frac{x-a}{\|x-a\|}} + \norm{\frac{x_2^*}{\|x_2^*\|}-\frac{x-b}{\|x-b\|}}\\
	&< \sqrt{2} + 2\sqrt{2\de'} = \sqrt{2}\paren{1+2\sqrt{\de'}}.
	\end{aligned}
	\end{equation*}
	This implies that
	\begin{align}\label{ip>}
	\norm{\frac{x_1^*}{\|x_1^*\|}-\frac{x_2^*}{\|x_2^*\|}}^2=2-2\frac{\ip{x_1^*}{x_2^*}}{\norm{x_1^*}\norm{x_2^*}} < 2\paren{1+2\sqrt{\de'}}^2
	\Leftrightarrow \ip{x_1^*}{x_2^*} > -4\paren{\sqrt{\de'}+\de'}\norm{x_1^*}\norm{x_2^*}.
	\end{align}
	Using $\norm{x_1^*}+\norm{x_2^*}=1$ which implies $\norm{x_1^*}\norm{x_2^*}\le 1/4$ and \eqref{ip>}, respectively, we obtain that
	\begin{equation*}
	\begin{aligned}
	\norm{x_1^*+x_2^*}^2 &=  \norm{x_1^*}^2 +\norm{x_2^*}^2+2\ip{x_1^*}{x_2^*} = 1-2 \norm{x_1^*}\norm{x_2^*}+2\ip{x_1^*}{x_2^*}\\
	&> 1-2 \norm{x_1^*}\norm{x_2^*}-8\paren{\sqrt{\de'}+\de'}\norm{x_1^*}\norm{x_2^*} \ge \frac{1}{2}-2\paren{\sqrt{\de'}+\de'}.
	\end{aligned}
	\end{equation*}
	This combining with \eqref{del'} yields that
	\begin{equation*}
	\begin{aligned}
	\norm{x_1^*+x_2^*} > \sqrt{\frac{1}{2}-2\paren{\sqrt{\de'}+\de'}}\;> \be.
	\end{aligned}
	\end{equation*}
	
	\emph{Case 2.}
	\begin{equation}\label{Case2_ass}
	\ip{x-a}{x-b}\le 0.
	\end{equation}
	Let us define $m=\frac{a+b}{2}$ and
	\begin{equation}\label{x'}
	x' = x - \frac{\ip{b-a}{x-m}}{\|b-a\|^2}(b-a).
	\end{equation}
	We first check that
	\begin{equation}\label{|x'-a|=|x'-b|}
	\|x'-a\|=\|x'-b\|.
	\end{equation}
	Indeed,
	\begin{align*}
	\|x'-a\|^2-\|x'-b\|^2 &= \|x-a\|^2-\|x-b\|^2 - 2\frac{\ip{b-a}{x-m}}{\|b-a\|^2}\ip{x-a}{b-a}\\
	&+ 2\frac{\ip{b-a}{x-m}}{\|b-a\|^2}\ip{x-b}{b-a}\\
	&= \|x-a\|^2-\|x-b\|^2 - 2\frac{\ip{b-a}{x-m}}{\|b-a\|^2}\ip{b-a}{b-a}\\
	&= \|x-a\|^2-\|x-b\|^2 - 2\ip{b-a}{x-m}\\
	&= \|x-a\|^2-\|x-b\|^2 - \ip{(x-a)-(x-b)}{(x-a) + (x-b)}=0.
	\end{align*}
	We next check that
	\begin{equation}\label{ip=0}
	\ip{x-x'}{x'-m}=0.
	\end{equation}
	Indeed, by \eqref{x'}, it holds that
	\begin{equation*}
	\ip{x-x'}{x'-m} = \frac{\ip{b-a}{x-m}}{\|b-a\|^2}\ip{b-a}{x'-m},
	\end{equation*}
	from which \eqref{ip=0} follows since
	\begin{align*}
	\ip{b-a}{x'-m} &= \ip{b-a}{x-m-\frac{\ip{b-a}{x-m}}{\|b-a\|^2}(b-a)}\\
	&= \ip{b-a}{x-m}-\frac{\ip{b-a}{x-m}}{\|b-a\|^2}\ip{b-a}{b-a}=0.
	\end{align*}
	Let us define also
	\begin{equation}\label{x^*'}
	{x_1^*}' = \frac{\|x_1^*\|}{\|x'-a\|}(x'-a),\quad   {x_2^*}' = \frac{\|x_2^*\|}{\|x'-b\|}(x'-b).
	\end{equation}
	It is clear that
	\begin{equation}\label{satisfy 13}
	\begin{aligned}
	&\|{x_1^*}'\| = \|x_1^*\|,\; \|{x_2^*}'\| = \|x_2^*\|,\; \|{x_1^*}'\| + \|{x_2^*}'\| =1,\\
	&\ip{{x_1^*}'}{x'-a} = \|{x_1^*}\|\|x'-a\| = \|{x_1^*}'\|\|x'-a\|,\\
	&\ip{{x_2^*}'}{x'-b} = \|{x_2^*}\|\|x'-b\|= \|{x_2^*}'\|\|x'-b\|.
	\end{aligned}
	\end{equation}
	We next check that
	\begin{align}\label{satisfy 11}
	\dist({x_1^*}',N_A(a)) < \de,\quad \dist({x_2^*}',N_B(b)) < \de.
	\end{align}
	Let us prove $\dist({x_1^*}',N_A(a)) < \de$.
	Indeed, since $x_1^*\in N_A(a)$ it holds by \eqref{x^*'} that
	\begin{equation}\label{key est_1}
	\begin{aligned}
	\dist({x_1^*}',N_A(a)) &\le \|{x_1^*}'-x_1^*\| = \norm{\frac{\|x_1^*\|}{\|x'-a\|}(x'-a)-x_1^*}\\
	&= \|x_1^*\|\norm{\frac{x'-a}{\|x'-a\|}-\frac{x_1^*}{\|x_1^*\|}}\\
	&\le \|x_1^*\|\paren{\norm{\frac{x-a}{\|x-a\|}-\frac{x_1^*}{\|x_1^*\|}}+\norm{\frac{x'-a}{\|x'-a\|}-\frac{x-a}{\|x-a\|}}}.
	\end{aligned}
	\end{equation}
	An upper bound of $\norm{\frac{x-a}{\|x-a\|}-\frac{x_1^*}{\|x_1^*\|}}$ has been given by \eqref{gap_2}:
	\begin{align}\label{est_term2}
	\norm{\frac{x-a}{\|x-a\|}-\frac{x_1^*}{\|x_1^*\|}} < \sqrt{2\de'}.
	\end{align}
	We now establish an upper bound of $\norm{\frac{x'-a}{\|x'-a\|}-\frac{x-a}{\|x-a\|}}$ via three steps as follows.
	
	\emph{Step 1.} We show that
	\begin{equation}\label{Step1}
	\|x-x'\|^2 \le \frac{2\de'-\de'^2}{4(1-\de')^2}\,\min\left\{\|x-a\|^2,\|x-b\|^2\right\}.
	\end{equation}
	
If $\|x-a\|\ge\|x-b\|$, then
	\begin{equation}\label{<x-m,b-a> >0}
	\begin{aligned}
	&\|x-a\|^2 - \|x-b\|^2 \ge 0\\
	\Leftrightarrow \|x-m\|^2 + \|m-a\|^2 + &2\ip{x-m}{m-a} - \|x-m\|^2 - \|m-b\|^2 - 2\ip{x-m}{m-b}\ge 0\\
	\Leftrightarrow &\ip{b-a}{x-m} \ge 0.
	\end{aligned}
	\end{equation}
	Note from \eqref{Case2_ass} that
	\begin{equation}\label{<x-b,b-a> >0}
	\ip{b-a}{x-b} = \ip{b-x}{x-b} + \ip{x-a}{x-b} = -\norm{x-b}^2 + \ip{x-a}{x-b} \le 0.
	\end{equation}
	Taking \eqref{x'}, \eqref{<x-m,b-a> >0} and \eqref{<x-b,b-a> >0} into account, we have that
	\begin{equation}\label{|x'-b|>|x-b|}
	\begin{aligned}
	\|x'-b\|^2 - \|x-b\|^2 &= \|x'-x\|^2+2\ip{x'-x}{x-b}\\
	&= \|x'-x\|^2 - 2\frac{\ip{b-a}{x-m}}{\|b-a\|^2}\ip{b-a}{x-b}\\
	&\ge \|x'-x\|^2.
	\end{aligned}
	\end{equation}
	By \eqref{|x'-a|=|x'-b|} and \eqref{ip=0} we get that
	\begin{align*}
	\|x-a\|^2+\|x-b\|^2 &= 2\|x-x'\|^2 + \|x'-a\|^2 + \|x'-b\|^2 + 2\ip{x-x'}{2x'-(a+b)} \\
	&= 2\|x-x'\|^2 + 2\|x'-b\|^2 + 4\ip{x-x'}{x'-m}\\
	&= 2\|x-x'\|^2 + 2\|x'-b\|^2.
	\end{align*}
	This together with \eqref{1 mp delta} and \eqref{|x'-b|>|x-b|} yields that
	\begin{align*}
	2\|x-x'\|^2 &= \|x-a\|^2+\|x-b\|^2 - 2\|x'-b\|^2\\
	&\le (1+\de')^2\|x-b\|^2 +\|x-b\|^2 - 2\|x'-b\|^2\\
	&\le (1+\de')^2\|x-b\|^2 +\|x-b\|^2 - 2\paren{\|x-b\|^2+\|x'-x\|^2}.
	\end{align*}
	Equivalently,
	\begin{equation}\label{Step1_case1}
	4\|x-x'\|^2 \le \paren{2\de'+\de'^2}\|x-b\|^2
	= \paren{2\de'+\de'^2}\min\{\|x-a\|^2,\|x-b\|^2\}
	\end{equation}
	since $\|x-a\|\ge \|x-b\|$ in this case.

By a similar argument, if $\|x-a\|\le \|x-b\|$, then
\[
\ip{b-a}{x-m} \le 0,\;
\ip{b-a}{x-a} \ge 0.
\]
Thus
	\begin{equation}\label{|x'-a|>|x-a|}
	\begin{aligned}
\|x'-a\|^2 - \|x-a\|^2 &= \|x'-x\|^2+2\ip{x'-x}{x-a}\\
	&= \|x'-x\|^2 - 2\frac{\ip{b-a}{x-m}}{\|b-a\|^2}\ip{b-a}{x-a}\\
	&\ge \|x'-x\|^2.
	\end{aligned}
	\end{equation}
By \eqref{|x'-a|=|x'-b|} and \eqref{ip=0} we get that
	\begin{align*}
	\|x-a\|^2+\|x-b\|^2 = 2\|x-x'\|^2 + 2\|x'-a\|^2,
	\end{align*}
which together with \eqref{1 mp delta} and \eqref{|x'-a|>|x-a|} yields that
	\begin{align*}
	2\|x-x'\|^2 \le \|x-a\|^2 + \frac{1}{(1-\de')^2}\|x-a\|^2 - 2\paren{\|x-a\|^2+\|x'-x\|^2}.
	\end{align*}
Equivalently,
	\begin{equation}\label{Step1_case2}
	4\|x-x'\|^2 \le \frac{2\de'-{\de'}^2}{(1-\de')^2}\|x-a\|^2 = \frac{2\de'-\de'^2}{(1-\de')^2}\min\{\|x-a\|^2,\|x-b\|^2\}
	\end{equation}
	since $\|x-a\|\le \|x-b\|$ in this case.
	
Combining \eqref{Step1_case1} and \eqref{Step1_case2} and noting that $2\de'+\de'^2 < \frac{2\de'-\de'^2}{(1-\de')^2}$, we obtain \eqref{Step1} as claimed.

	\emph{Step 2.} We show that
	\begin{align}\label{Step2}
	\|x'-a\|^2 \ge \frac{4-6\de'+3\de'^2}{2\de'-\de'^2}\|x-x'\|^2.
	\end{align}
		Indeed, if $\|x-a\|\le \|x-b\|$, then the use of \eqref{|x'-a|>|x-a|} and \eqref{Step1_case2} yields \eqref{Step2}:
	\begin{align*}
	 \|x'-a\|^2 &\ge \|x-a\|^2+\|x-x'\|^2\\
	&\ge \frac{4(1-\de')^2)}{2\de'-\de'^2}\|x-x'\|^2+\|x-x'\|^2 = \frac{4-6\de'+3\de'^2}{2\de'-\de'^2}\|x-x'\|^2.
	\end{align*}
		Otherwise, i.e., $\|x-a\|\ge \|x-b\|$, then the use of \eqref{|x'-a|=|x'-b|}, \eqref{|x'-b|>|x-b|} and \eqref{Step1_case1} successively implies that
	\begin{align*}
	\|x'-a\|^2 &= \|x'-b\|^2 \ge \|x-b\|^2+\|x-x'\|^2\\
	&\ge \frac{4}{2\de'+\de'^2}\|x-x'\|^2+\|x-x'\|^2 = \frac{4+2\de'+\de'^2}{2\de'+\de'^2}\|x-x'\|^2,
	\end{align*}
which also yields \eqref{Step2} since $\frac{4+2\de'+\de'^2}{2\de'+\de'^2} > \frac{4-6\de'+3\de'^2}{2\de'-\de'^2}$.
Hence \eqref{Step2} has been proved.
	
	\emph{Step 3.} We show that
	\begin{align}\label{Step3}
	\norm{\frac{x'-a}{\|x'-a\|}-\frac{x-a}{\|x-a\|}} \le 2\frac{\|x-x'\|}{\|x'-a\|}.
	\end{align}
	Indeed,
	\begin{align*}
	\norm{\frac{x'-a}{\|x'-a\|}-\frac{x-a}{\|x-a\|}} &\le \norm{\frac{x'-a}{\|x'-a\|}-\frac{x-a}{\|x'-a\|}} + \norm{\frac{x-a}{\|x'-a\|}-\frac{x-a}{\|x-a\|}}\\
	&= \frac{\|x-x'\|}{\|x'-a\|} + \left|\frac{\|x-a\|}{\|x'-a\|}-1\right|.
	\end{align*}
	If $\|x-a\|\ge \|x'-a\|$, then \eqref{Step3} holds true since
	\begin{align*}
	\left|\frac{\|x-a\|}{\|x'-a\|}-1\right| = \frac{\|x-a\|}{\|x'-a\|}-1\le  \frac{\|x-x'\|+\|x'-a\|}{\|x'-a\|}-1
	=\frac{\|x-x'\|}{\|x'-a\|}.
	\end{align*}
	Otherwise, i.e., $\|x-a\|<\|x'-a\|$, then \eqref{Step3} also holds true since
	\begin{align*}
	\left|\frac{\|x-a\|}{\|x'-a\|}-1\right| = 1- \frac{\|x-a\|}{\|x'-a\|}
	\le 1- \frac{\|x'-a\|-\|x-x'\|}{\|x'-a\|}
	= \frac{\|x-x'\|}{\|x'-a\|}.
	\end{align*}
	Hence \eqref{Step3} has been proved.
	
	A combination of \eqref{Step2} and \eqref{Step3} yields that
	\begin{align}\label{est_term1}
	\norm{\frac{x'-a}{\|x'-a\|}-\frac{x-a}{\|x-a\|}} \le 2\sqrt{\frac{2\de'-\de'^2}{4-6\de'+3\de'^2}}.
	\end{align}
	Plugging \eqref{est_term2} and \eqref{est_term1} into \eqref{key est_1} and using \eqref{del'2}, we obtain that
	\begin{align}\label{dis2N_A}
	\dist({x_1^*}',N_A(a)) \le \|{x_1^*}'-{x_1^*}\| &\le \norm{x_1^*}\paren{\sqrt{2\de'} + 2\sqrt{\frac{2\de'-\de'^2}{4-6\de'+3\de'^2}}}\\
	\nonumber
	&< \sqrt{2\de'} + 2\sqrt{\frac{2\de'-\de'^2}{4-6\de'+3\de'^2}}\;<\de.
	\end{align}
	The proof of $\dist({x_2^*}',N_B(b)) < \de$ is analogous and we also obtain that
	\begin{align}\label{dis2N_B}
	\dist({x_2^*}',N_B(b))\le \|{x_2^*}'-{x_2^*}\| &\le \norm{x_2^*}\paren{\sqrt{2\de'} + 2\sqrt{\frac{2\de'-\de'^2}{4-6\de'+3\de'^2}}}\\
	\nonumber
	&< \sqrt{2\de'} + 2\sqrt{\frac{2\de'-\de'^2}{4-6\de'+3\de'^2}}\;<\de.
	\end{align}
	Hence \eqref{satisfy 11} has been proved.
	
	Conditions \eqref{satisfy 11} and \eqref{satisfy 13} ensure that the pair of vectors $\{{x_1^*}',{x_2^*}'\}$ satisfies conditions \eqref{11} and \eqref{13}, respectively.
	It is trivial from the choice of $\de'$ at \eqref{del'2} that $a\in (A\setminus B)\cap \Ball_{\delta}(\bx)$, $b\in (B\setminus A)\cap \Ball_{\delta}(\bx)$.
We also have $x'\in \Ball_{\delta}(\bx)$ since
	\begin{align*}
\|x' -\bx\| 
&= \left \| x- \dfrac{\langle b-a, x- m \rangle}{\|b-a\|^2}(b-a) - \bar x
\right\| \le \|x -\bx\| + \|x - m\| \le \delta' + \max\{\|x-a\|,\|x-b\|\}\\
&\le \delta' + \|x-\bx\| + \max\{\|a-\bx\|, \|b-\bx\|\}
\le 3\delta' < \delta.
\end{align*}

	Hence, the estimate \eqref{>al} is applicable to $\{{x_1^*}',{x_2^*}'\}$. That is,
	\begin{equation}\label{est for x^*'}
	\|{x_1^*}'+{x_2^*}'\| > \al.
	\end{equation}
	Now using the triangle inequality, \eqref{dis2N_A}, \eqref{dis2N_B}, \eqref{est for x^*'}, \eqref{del'2} and \eqref{be} successively, we obtain the desired estimate:
	\begin{equation*}
	\begin{aligned}
	\|{x_1^*}+{x_2^*}\| &= \|{x_1^*}'+{x_2^*}'+{x_1^*}-{x_1^*}' + {x_2^*}-{x_2^*}'\|\\
	&\ge \|{x_1^*}'+{x_2^*}'\|-\|{x_1^*}'-{x_1^*}\|-\|{x_2^*}'-{x_2^*}\|\\
	&\ge \al - \paren{\norm{x_1^*}+\norm{x_2^*}}\paren{\sqrt{2\de'} + 2\sqrt{\frac{2\de'+\de'^2}{4+2\de'+\de'^2}}}\\
	&= \al - \paren{\sqrt{2\de'} + 2\sqrt{\frac{2\de'-\de'^2}{4-6\de'+3\de'^2}}}\\
	&> \al - (\al-\be) = \be.
	\end{aligned}
	\end{equation*}
	This completes \emph{Case 2} and \eqref{conclusion} has been proved.
	
	Hence, we have proved that $\{A,B\}$ is intrinsically transversal at $\bx$ with $\itr\ge\be$.
	Since $\be$ can be arbitrarily close to $\min\left\{\itrd{c},1/\sqrt{2}\right\}$, we also obtain the estimate \eqref{quan_est} and the proof is complete.
	\qed
\end{proof}

\begin{remark}\label{r:itrc<}
Technically, we are unable to get rid of the figure $1/\sqrt{2}$ in the key estimate \eqref{quan_est} of Theorem \ref{t:question 3}.
Fortunately, this technical burden does not restrict the analysis of the relevant transversality-type properties.
Let us briefly look at the case $\itrc > 1/\sqrt{2}$. 
Theorem \ref{t:question 3} together with Proposition \ref{p:relation_cons}$(i)$ implies that all the quantitative constants $\itr$, $\itrd{w}$, $\itrc$ and $\str$ are greater than $1/\sqrt{2}$ and, as a consequence, all the properties characterized by these constants are satisfied.
Indeed, the analysis of transversality-type properties would primarily address the question about the strict positiveness of the relevant characterizing constants rather than their boundedness from below by some positive number.
\end{remark}

Due to the technical burden as well as the ease for it discussed in Remark \ref{r:itrc<}, in the remainder of this section, we always make use of the assumption:
\begin{equation}\label{itrc<}
\itrc \le 1/\sqrt{2}.
\end{equation}

\begin{corollary}[equivalence of dual properties in Hilbert spaces]\label{c:equivalence}
	Let $\{A,B\}$ be a pair of closed sets and $\bx\in A\cap B$.
	Then it holds that
\begin{equation}\label{equal constant}
\itrc = \itr = \itrd{w}.
\end{equation}
\end{corollary}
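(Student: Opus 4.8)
The plan is to derive \eqref{equal constant} by sandwiching the three constants between two chains of inequalities that are already available: the ``cheap'' chain from Proposition~\ref{p:relation_cons}(i) and the ``expensive'' reverse estimate from Theorem~\ref{t:question 3}. First I would recall that Proposition~\ref{p:relation_cons}(i) gives
\[
\itr \le \itrd{w} \le \itrc,
\]
an inequality valid in any normed space and needing no Hilbert structure. Hence the whole content of the corollary reduces to establishing the reverse inequality $\itrc \le \itr$.

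For this, I would invoke the key estimate of Theorem~\ref{t:question 3}, namely $\min\{\itrc,1/\sqrt2\}\le\itr$. Under the standing assumption \eqref{itrc<} that $\itrc\le 1/\sqrt2$, the minimum equals $\itrc$, so the key estimate reads precisely $\itrc\le\itr$. Combining with the chain above yields
\[
\itrc \le \itr \le \itrd{w} \le \itrc,
\]
which forces $\itr=\itrd{w}=\itrc$, i.e.\ \eqref{equal constant}.

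I do not expect any genuine obstacle at this stage: all of the difficulty of the equivalence has already been absorbed into the (technical) proof of Theorem~\ref{t:question 3}, where the Hilbert-space construction $x\mapsto x'$ is used to symmetrize the ratio $\|x-a\|/\|x-b\|$ into an exact equality while simultaneously controlling the perturbations $\|{x_1^*}'-x_1^*\|$ and $\|{x_2^*}'-x_2^*\|$ of the normalized normals. The only point worth a remark is that the excluded regime $\itrc>1/\sqrt2$ is immaterial: by Theorem~\ref{t:question 3} and Proposition~\ref{p:relation_cons}(i) each of $\itr$, $\itrd{w}$, $\itrc$ (and $\str$) then exceeds $1/\sqrt2$, so every transversality-type property characterized by positivity of these constants holds automatically, as already noted in Remark~\ref{r:itrc<}.
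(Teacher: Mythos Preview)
Your argument is correct and matches the paper's own proof essentially line for line: combine the chain $\itr\le\itrd{w}\le\itrc$ from Proposition~\ref{p:relation_cons}(i) with Theorem~\ref{t:question 3} under the standing assumption~\eqref{itrc<} to get $\itrc\le\itr$, and conclude. There is nothing to add.
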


\begin{proof}
A combination of Theorem \ref{t:question 3} and the inequality \eqref{itrc<} yields that $\itrc \le \itr$, which together with Proposition \ref{p:relation_cons}$(i)$ yields the equalities in \eqref{equal constant}.
\qed
\end{proof}

The next result significantly refines Proposition \ref{p:suff_subtran} in the Hilbert space setting, which is the weakest sufficient dual condition for subtransversality in the literature.

\begin{corollary}[refined sufficient dual condition for subtransversality]\label{c:suff_subtran}
A pair of closed sets $\{A,B\}$ is subtransversal at $\bx\in A\cap B$ if $\itrc>0$, that is, there exist numbers $\alpha\in]0,1[$ and $\delta>0$ such that $\|x^*_1+x^*_2\|>\alpha$
for all $a\in(A\setminus B)\cap \B_\de(\bx)$, $b\in(B\setminus A)\cap \B_\de(\bx)$, $x\in \B_\de(\bx)$ with $\|x-a\|=\|x-b\|$, and $x_1^*,x_2^*\in X^*$ satisfying 
\begin{gather*}
\dist(x_1^*,N_{A}(a))<\delta,\quad \dist(x_2^*,N_{B}(b))<\delta,\\
\|x^*_1\|+\|x^*_2\|=1,\quad
\langle x_1^*,x-a\rangle=\|x_1^*\|\,\|x-a\|,\;
\langle x_2^*,x-b\rangle=\|x_2^*\|\,\|x-b\|.
\end{gather*}
\end{corollary}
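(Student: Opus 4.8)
The plan is to derive Corollary~\ref{c:suff_subtran} as an immediate consequence of Theorem~\ref{t:question 3} together with Corollary~\ref{c:equivalence} and Proposition~\ref{p:relation_cons}; no new estimates are required. The first step is purely a matter of unwinding notation: by the definition \eqref{itrc constant} of $\itrc$, the assertion ``there exist $\alpha\in\,]0,1[$ and $\delta>0$ such that $\|x^*_1+x^*_2\|>\alpha$ for all $a,b,x$ and $x^*_1,x^*_2$ satisfying the listed constraints'' is exactly equivalent to the single inequality $\itrc>0$. Hence it suffices to prove the implication $\itrc>0\ \Longrightarrow\ \{A,B\}$ is subtransversal at $\bx$.

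To prove this implication I would use the chain $\itrc=\itrd{w}$ from Corollary~\ref{c:equivalence}, which is available here under the standing assumption \eqref{itrc<} that $\itrc\le1/\sqrt2$ (alternatively, directly from Theorem~\ref{t:question 3} one has $\min\{\itrc,1/\sqrt2\}\le\itr\le\itrd{w}$ by Proposition~\ref{p:relation_cons}(i)). Thus $\itrc>0$ forces $\itrd{w}>0$, and the already-recalled sufficient condition for subtransversality --- namely that $\itrd{w}>0$ implies subtransversality, i.e.\ Proposition~\ref{p:suff_subtran} rewritten in terms of the constant $\itrd{w}$, or equivalently Proposition~\ref{p:relation_cons}(ii), which gives $\str\ge\itrd{w}>0$ --- closes the argument via Definition~\ref{D3}. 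The complementary case $\itrc>1/\sqrt2$ (outside the standing assumption) needs no extra work: then Theorem~\ref{t:question 3} yields $\itr\ge1/\sqrt2>0$, so $\{A,B\}$ is even intrinsically transversal at $\bx$, hence a fortiori subtransversal, exactly as noted in Remark~\ref{r:itrc<}.

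I would finish with one sentence making the content of the corollary explicit: since $\itrd{w}\le\itrc$ always holds by Proposition~\ref{p:relation_cons}(i), the hypothesis $\itrc>0$ is formally weaker --- and notationally lighter, involving only $\dist(x^*_1,N_A(a))<\delta$ and $\dist(x^*_2,N_B(b))<\delta$ in place of the additional inner double limit over $a',b',x_1',x_2'$ of Proposition~\ref{p:suff_subtran} --- yet by Theorem~\ref{t:question 3} it is still enough to guarantee subtransversality in the Hilbert space setting, which is precisely the refinement being claimed. I do not expect any genuine obstacle in this proof, since all the difficulty was absorbed into the proof of Theorem~\ref{t:question 3}; the only points that require a little care are the notational equivalence in the first step and the bookkeeping around the threshold $1/\sqrt2$, the latter already settled in Remark~\ref{r:itrc<}.
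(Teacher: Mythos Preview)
Your proposal is correct and follows essentially the same route as the paper's proof: invoke Corollary~\ref{c:equivalence} to get $\itrc=\itrd{w}$ and then Proposition~\ref{p:relation_cons}(ii) to conclude subtransversality from $\itrd{w}>0$. The only difference is that the paper simply works under the standing assumption~\eqref{itrc<} (so the case $\itrc>1/\sqrt{2}$ is handled once in Remark~\ref{r:itrc<} rather than revisited here), whereas you treat that threshold case explicitly; both are fine.
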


\begin{proof}
One the one hand, by Proposition \ref{p:relation_cons}$(ii)$, $\{A,B\}$ is subtransversal at $\bx$ if $\itrd{w}>0$.
On the other hand, it holds that $\itrc = \itrd{w}$ thanks to Corollary \ref{c:equivalence}.
Hence, the proof is complete.
\qed
\end{proof}

\begin{remark}[dual characterization of subtransversality with convexity]
	For a pair of closed and convex sets $\{A,B\}$, the property $\itrc > 0$ is not only sufficient but also necessary for subtransversality of $\{A,B\}$ at $\bx$.
\end{remark}

\begin{remark}\label{r:suff but not ness} As a by-product, we can now deduce the negative answer to the question whether the sufficient dual condition for subtransversality - Proposition \ref{p:suff_subtran} or equivalently Corollary \ref{c:suff_subtran} is also neccessary in the nonconvex setting.
Indeed, thanks to Corollary \ref{c:equivalence} the sufficient dual condition is equivalent to intrinsic transversality of $\{A,B\}$ at $\bar{x}$, which in turn implies local linear convergence of the alternating projection method around $\bx$ to a point in the intersection $A\cap B$ thanks to \cite[Theorem 6.1]{DruIofLew15}.
But in the nonconvex setting, there are pairs of closed sets that are subtransversal but the alternating projection method does not locally converge to a point in the intersection of the sets.
Hence, we infer that the dual condition $\itrc > 0$ (equivalently, $\itrd{w}>0$ or $\itr>0$) is not necesary for subtransversality.
\end{remark}

\begin{remark} We are ready to answer the research question raised by Kruger \cite[question 3, page 140]{Kru18} in the Hilbert space setting.
Recall that the property $\itrd{w} > 0$ is called \emph{weak intrinsic transversality} in \cite{KruLukNgu17,Kru18}.
The question is about the relationship between this property and intrinsic transversality in the general normed space setting.
The second equality of \eqref{equal constant} clearly shows that the two properties coincide in the Hilbert space setting.
Unfortunately, we have not obtained the answer in more general settings.
\end{remark}
 
In summary, Corollary \ref{c:equivalence} allows one to unify a number of dual transversality-type properties in the Hilbert space setting including intrinsic transversality, its weaker variant considered in \cite{Kru18}, the sufficient dual condition for subtransversality \cite{KruTha15,KruLukNgu18,KruLukNgu17} and the dual characterization of subtransversality with convexity \cite{Kru18}.
In our opinion, this significantly clarifies the picture of these important dual space properties.

\section{Intrinsic Transversality in Finite Dimensional Spaces}\label{s:finite dimensional}

We first recall definitions about relative limiting normals which are motivated by the compactness of the unit ball in finite dimensional spaces as well as the fact that not all normal vectors are always involved for characterizing transversality-type properties.
These notions were shown to be useful for analyzing the intrinsic transversality property and its variants, see \cite[page 123]{Kru18} for a more thorough discussion.

\begin{definition} \cite[Definition 2]{Kru18}\label{D2}
	Let $X$ be finite dimensional, $A,B\subset X$ and $\bx\in A\cap B$.
	\begin{enumerate}
		\item
		A pair $(x^*_1,x^*_2)\in X^*\times X^*$ is called
		a \emph{pair of relative limiting normals} to $\{A,B\}$ at $\bar x$ if there exist sequences $(a_k)\subset A\setminus B$, $(b_k)\subset B\setminus A$, $(x_k)\subset X$ and $(x_{1k}^*),(x_{2k}^*)\subset X^*$ such that $x_k\ne a_k$, $x_k\ne b_k$ $(k=1,2,\ldots)$, $a_k\to\bx$, $b_k\to\bx$, $x_k\to\bx$, $x_{1k}^*\to x_1^*$, $x_{2k}^*\to x_2^*$, and
		\begin{gather*}	
		x_{1k}^*\in N_A(a_k),\quad x_{2k}^*\in N_B(b_k)\quad (k=1,2,\ldots),
		\\
		\frac{\norm{x_k-a_k}}{\norm{x_k-b_k}} \to1,\quad
		\frac{\ang{x_{1k}^*,x_k-a_k}}{\norm{x_{1k}^*}\norm{x_k-a_k}} \to1,\quad
		\frac{\ang{x_{2k}^*,x_k-b_k}}{\norm{x_{2k}^*}\norm{x_k-b_k}} \to1,
		\end{gather*}
		with the convention that $\frac{0}{0}=1$.
		The collections of all pairs of relative limiting normals to $\{A,B\}$ at $\bar x$ will be denoted by $\overline{N}_{A,B}(\bar x)$.
		\item
		A pair $(x^*_1,x^*_2)\in X^*\times X^*$ is called
		a \emph{pair of restricted relative limiting normals} to $\{A,B\}$ at $\bar x$ if there exist sequences $(a_k)\subset A\setminus B$, $(b_k)\subset B\setminus A$, $(x_k)\subset X$ and $(x_{1k}^*),(x_{2k}^*)\subset X^*$ such that $\norm{x_k-a_k}=\norm{x_k-b_k}$ $(k=1,2,\ldots)$,
		$a_k\to\bx$, $b_k\to\bx$, $x_k\to\bx$, $x_{1k}^*\to x_1^*$, $x_{2k}^*\to x_2^*$, and
		\begin{gather*}
		\dist(x_{1k}^*,N_A(a_k))\to0,\quad \dist(x_{2k}^*,N_B(b_k))\to0,
		\\
		\ang{x_{1k}^*,x_k-a_k}=\norm{x_{1k}^*}\norm{x_k-a_k},\quad
		\ang{x_{2k}^*,x_k-b_k}=\norm{x_{2k}^*}\norm{x_k-b_k}\quad (k=1,2,\ldots).
		\end{gather*}
		The collections of all pairs of restricted relative limiting normals to $\{A,B\}$ at $\bar x$ will be denoted by $\overline{N}^c_{A,B}(\bar x)$.
	\end{enumerate}
\end{definition}

The following result shows that the two sets $\overline{N}_{A,B}(\bar x)$ and $\overline{N}^c_{A,B}(\bar x)$ are cones.

\begin{proposition} \cite[Proposition 2(i)]{Kru18}
		Let $A,B$ be subsets of a finite dimensional space $X$ and $\bx \in A \cap B$.
		The sets $\overline{N}_{A,B}(\bx)$ and $\overline{N}^c_{A,B}(\bx)$ are closed cones in $X^* \times X^*$, possibly empty.
		Moreover, if
		$(x^*_1,x^*_2) \in \overline{N}_{A,B}(\bx)$ or $(x^*_1,x^*_2) \in \overline{N}^c_{A,B}(\bx)$  then $(t_1x^*_1,t_2x^*_2) \in \overline{N}_{A,B}(\bx)$ or $(t_1x^*_1,t_2x^*_2) \in \overline{N}^c_{A,B}(\bx)$ for all $t_1,t_2 >0$.	
\end{proposition}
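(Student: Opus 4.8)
The plan is to establish, in order, the bi-homogeneity $(x_1^*,x_2^*)\mapsto(t_1x_1^*,t_2x_2^*)$ of each set for all $t_1,t_2>0$ (which contains the cone property as the special case $t_1=t_2$), then the topological closedness via a diagonal argument, and finally the harmless remark that $(0,0)$ belongs to each set whenever it is nonempty.

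\emph{Homogeneity.} Suppose $(x_1^*,x_2^*)\in\overline{N}_{A,B}(\bx)$ with witnessing sequences $a_k,b_k,x_k,x_{1k}^*,x_{2k}^*$ as in Definition~\ref{D2}, and fix $t_1,t_2>0$. I would keep $a_k,b_k,x_k$ and replace the dual sequences by $t_1x_{1k}^*$ and $t_2x_{2k}^*$. These still converge, now to $t_1x_1^*$ and $t_2x_2^*$; they still lie in $N_A(a_k)$ and $N_B(b_k)$ since the Fr\'echet normal cone is a cone; and each of the two normalized inner-product quotients in Definition~\ref{D2} is literally unchanged, because the positive factor $t_i$ ($i\in\{1,2\}$) cancels between numerator and denominator, with the convention $0/0=1$ respected. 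Hence $(t_1x_1^*,t_2x_2^*)\in\overline{N}_{A,B}(\bx)$. The same substitution works verbatim for $\overline{N}^c_{A,B}(\bx)$: one additionally uses $\dist(t_ix_{ik}^*,N_A(a_k))=t_i\,\dist(x_{ik}^*,N_A(a_k))\to0$ (distances to a cone scale under positive multiples), the orthogonality-type equalities $\ang{x_{ik}^*,x_k-a_k}=\norm{x_{ik}^*}\norm{x_k-a_k}$ scale through since both sides acquire the factor $t_i$, and the equation $\norm{x_k-a_k}=\norm{x_k-b_k}$ is untouched. Taking $t_1=t_2$ gives the cone property of both sets.

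\emph{Closedness.} Let $(x_1^{*(j)},x_2^{*(j)})\in\overline{N}_{A,B}(\bx)$ converge to $(x_1^*,x_2^*)$ in $X^*\times X^*$, and for each $j$ let $a_k^{(j)},b_k^{(j)},x_k^{(j)},x_{1k}^{*(j)},x_{2k}^{*(j)}$ be witnessing sequences indexed by $k$. As $k\to\infty$ the finitely many scalars $\norm{a_k^{(j)}-\bx}$, $\norm{b_k^{(j)}-\bx}$, $\norm{x_k^{(j)}-\bx}$, $\norm{x_{ik}^{*(j)}-x_i^{*(j)}}$, together with the three deviations of the quotients from $1$, all tend to $0$, so I may pick $k=k(j)$ making each of them smaller than $1/j$. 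Setting $\tilde a_j:=a_{k(j)}^{(j)}$ and, analogously, $\tilde b_j,\tilde x_j,\tilde x_{1j}^*,\tilde x_{2j}^*$, one checks that $\tilde a_j,\tilde b_j,\tilde x_j\to\bx$; that $\tilde x_{ij}^*\to x_i^*$ because $\norm{\tilde x_{ij}^*-x_i^*}\le 1/j+\norm{x_i^{*(j)}-x_i^*}\to0$; that the memberships $\tilde x_{1j}^*\in N_A(\tilde a_j)$, $\tilde x_{2j}^*\in N_B(\tilde b_j)$ and the relations $\tilde x_j\ne\tilde a_j$, $\tilde x_j\ne\tilde b_j$ are inherited term by term; and that the three quotients tend to $1$. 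Hence $(x_1^*,x_2^*)\in\overline{N}_{A,B}(\bx)$. The identical selection proves closedness of $\overline{N}^c_{A,B}(\bx)$; the only changes are that the two quantities $\dist(x_{ik}^{*(j)},N_A(a_k^{(j)}))$ and $\dist(x_{ik}^{*(j)},N_B(b_k^{(j)}))$ are also forced below $1/j$, while the exact equalities $\norm{x_k-a_k}=\norm{x_k-b_k}$ and $\ang{x_{ik}^*,x_k-a_k}=\norm{x_{ik}^*}\norm{x_k-a_k}$ pass to the diagonal unchanged.

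\emph{Containment of the origin, and the main difficulty.} If either set is nonempty, I would fix one of its elements together with witnessing sequences $a_k,b_k,x_k$ and replace the dual sequences by the constant zero sequences: $0\in N_A(a_k)$ and $0\in N_B(b_k)$, the quotient conditions hold by the convention $0/0=1$ (and, for $\overline{N}^c_{A,B}(\bx)$, because $\dist(0,N_A(a_k))=0$ and $\ang{0,x_k-a_k}=0=\norm{0}\norm{x_k-a_k}$), and the conditions on $\norm{x_k-a_k}$ and $\norm{x_k-b_k}$ are untouched, so $(0,0)$ lies in the set. I do not anticipate a genuine obstacle: the argument is elementary and does not even use finite-dimensionality. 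The only point that requires care is the bookkeeping in the diagonal extraction — simultaneously controlling the finitely many scalar deviations — together with the consistent use of the convention $0/0=1$ so that the quotient conditions behave well under both the scaling and the diagonal limit.
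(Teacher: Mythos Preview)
The paper does not supply its own proof of this proposition; it is quoted from \cite[Proposition 2(i)]{Kru18} and used as a known fact. Your argument is correct and is precisely the elementary verification one would give: positive bi-homogeneity follows because the Fr\'echet normal cone is a cone and the normalized quotients (respectively, the exact equalities and the distances to the cones in the restricted case) are invariant under positive rescaling of the dual sequences, while closedness is a routine diagonal extraction. Your observation that finite-dimensionality is not actually used here is also correct --- the hypothesis matters for the compactness arguments that come \emph{after} this proposition in both the present paper and \cite{Kru18}, not for the proposition itself.
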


Thanks to the compactness of the sets under the following minima in finite dimensional spaces, making use of the notions in Definition \ref{D2} leads to alternative and more concise representations for $\itr$ and $\itrc$ compared to \eqref{itr constant} and \eqref{itrc constant}, respectively:
\begin{gather}\label{itr_constant_2}
\itr= \min_{\substack{(x^*_1,x^*_2)\in\overline{N}_{A,B}(\bar x)\\
		\|x^*_1\|+\|x^*_2\|=1}}\|x^*_1+x^*_2\|,
\\ \label{itrc_constant_2}
\itrc= \min_{\substack{(x^*_1,x^*_2)\in\overline{N}{}^c_{A,B}(\bar x)\\
		\|x^*_1\|+\|x^*_2\|=1}}\|x^*_1+x^*_2\|,
\end{gather}
with the convention that the minimum over the empty set equals $1$.

We have shown in Corollary \ref{c:equivalence} that the two constants are either both greater than $1/\sqrt{2}$ or equal in the Hilbert space setting.
How about the relationship between the two cones under the minima on the right-hand-side of \eqref{itr_constant_2} and \eqref{itrc_constant_2}?
It has only been known from \cite{Kru18} that $\overline{N}^c_{A,B}(\bar x) \subset \overline{N}_{A,B}(\bar x)$ since if $(x^*_1,x^*_2)\in X^*\times X^*$ is a \emph{pair of restricted relative limiting normals} to $\{A,B\}$ at $\bar x$, then it is also a \emph{pair of relative limiting normals} to $\{A,B\}$ at this point.
One of our goals in this section is to further investigate this relationship.
By doing this, we are indeed addressing the research question asked by Kruger \cite[question 4, page 140]{Kru18}.

To begin, let us define the constraint set below:
\begin{equation}\label{con_cone}
C := \{(x_1^*,x_2^*)\in X^* \times X^* \mid \ip{x_1^*}{x_2^*}\le 0\}.
\end{equation}

\begin{remark}
It is clear that $C$ is a closed cone in $X^* \times X^*$ and if $(x^*_1,x^*_2) \in C$, then $(t_1x^*_1,t_2x^*_2) \in C$ for all $t_1,t_2 \ge 0$.
\end{remark}

\begin{remark}\label{r:about C}
The set $C$ is consistently related to the assumption \eqref{itrc<} we imposed in Section \ref{s:Tran,subtran,itr tran}.
More specifically, condition \eqref{itrc<} holds true if and only if
	\begin{equation*}
C \cap \overline{N}^c_{A,B}(\bar x) \cap \{(x_1^*,x_2^*)\in X^* \times X^* \mid \|x_1^*\| + \|x_2^*\|=1\} \neq \emptyset.
	\end{equation*}
Under this condition, one can further refine the two expressions \eqref{itr_constant_2} and \eqref{itrc_constant_2}:
\begin{gather}\label{itr_constant_3}
\itr= \min_{\substack{(x^*_1,x^*_2)\in\overline{N}_{A,B}(\bar x)\\
		\|x^*_1\|+\|x^*_2\|=1}}\|x^*_1+x^*_2\| = \min_{\substack{(x^*_1,x^*_2)\in C \cap \overline{N}_{A,B}(\bar x)\\
		\|x^*_1\|+\|x^*_2\|=1}}\|x^*_1+x^*_2\|,
\\\label{itrc_constant_3}
\itrc= \min_{\substack{(x^*_1,x^*_2)\in\overline{N}{}^c_{A,B}(\bar x)\\
		\|x^*_1\|+\|x^*_2\|=1}}\|x^*_1+x^*_2\| = \min_{\substack{(x^*_1,x^*_2)\in C\cap \overline{N}{}^c_{A,B}(\bar x)\\
		\|x^*_1\|+\|x^*_2\|=1}}\|x^*_1+x^*_2\|,
\end{gather}
and, as a consequence, only pairs of vectors in $C$ are needed for calculating the two quantitative constants.
\end{remark}

In view of \eqref{itr_constant_3} and \eqref{itrc_constant_3} and Corollary \ref{c:equivalence}, a number of characterizations of intrinsic transversality can be recast in the next proposition, which slightly extends the list in \cite[Theorems 5 and 6]{Kru18}.

\begin{proposition}\label{p:characterizations of itr}
	Let $X$ be finite dimensional, $A,B\subset X$ be closed and $\bx\in A\cap B$. 
	The following conditions are equivalent:
	\begin{enumerate}
		\item
		$\{A,B\}$ is intrinsically transversal at $\bar x$;
		\item $\itr>0$;
		\item $\itrd{w}>0$;
		\item $\itrc>0$;
		\item
		there exists a number $\alpha\in]0,1[$ such that ${\|x^*_1+x^*_2\|>\alpha}$
		for all $(x^*_1,x^*_2)\in C\cap \overline{N}_{A,B}(\bar x)$ satisfying
		$\|x^*_1\|+\|x^*_2\|=1$;
		\item
		there exists a number $\alpha\in]0,1[$ such that ${\|x^*_1+x^*_2\|>\alpha}$
		for all $(x^*_1,x^*_2)\in C\cap \overline{N}^c_{A,B}(\bar x)$ satisfying
		$\|x^*_1\|+\|x^*_2\|=1$;
		\item
		$\left\{x^*\in X^*\mid (x^*,-x^*)\in\overline{N}_{A,B}(\bar x)\right\} \subset \{0\}$;
		\item
		$\left\{x^*\in X^*\mid (x^*,-x^*)\in\overline{N}^c_{A,B}(\bar x)\right\} \subset \{0\}$.
	\end{enumerate}
	If, in addition, the sets $A,B$ are convex, then the following item can be added to the above list.
	\begin{enumerate}
	\setcounter{enumi}{8}
	\item $\{A,B\}$ is subtransversal at $\bar x$.
	\end{enumerate}
\end{proposition}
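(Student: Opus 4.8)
The plan is to establish the equivalences by chaining together results already proved in the excerpt, so that the whole work reduces to a cycle of elementary implications. First I would observe that, by Remark~\ref{r:itrc<}, we may safely assume \eqref{itrc<} throughout: indeed, if $\itrc>1/\sqrt2$ then all five constants exceed $1/\sqrt2>0$, so each of (i)--(iv) holds, while (v)--(viii) hold vacuously or with $\al=1/\sqrt2$, and in the convex case (ix) follows from Proposition~\ref{p:relation_cons}(iii)--(iv); thus there is nothing to prove in that regime. Under \eqref{itrc<}, Corollary~\ref{c:equivalence} gives the key identity $\itrc=\itr=\itrd{w}$, which immediately yields the equivalence of (ii), (iii) and (iv); the equivalence of (i) and (ii) is the content of the proposition stated just before Proposition~\ref{p:relation_cons}. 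So the substantive part is to link the quantitative statements (ii)--(iv) to the cone-theoretic formulations (v)--(viii).

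For (iv)$\Leftrightarrow$(vi), I would invoke the refined representation \eqref{itrc_constant_3} from Remark~\ref{r:about C}: the strict inequality $\itrc>0$ is, by definition of the minimum in \eqref{itrc_constant_3}, exactly the assertion that $\|x^*_1+x^*_2\|>0$ — and, by compactness of the unit sphere $\{\|x^*_1\|+\|x^*_2\|=1\}$ intersected with the closed cone $C\cap\overline N^c_{A,B}(\bx)$ in the finite-dimensional space $X^*\times X^*$, that the minimum is attained and hence bounded below by some $\al\in\,]0,1[$. This is precisely (vi). The identical argument with \eqref{itr_constant_3} in place of \eqref{itrc_constant_3} gives (ii)$\Leftrightarrow$(v). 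For the ``diagonal'' formulations, note that $(x^*,-x^*)$ automatically lies in $C$ since $\ip{x^*}{-x^*}=-\|x^*\|^2\le0$, and that $x^*_1+x^*_2=0$ together with $\|x^*_1\|+\|x^*_2\|=1$ forces $x^*_1=-x^*_2$ with $\|x^*_1\|=1/2$; so (vii) says exactly that no nonzero pair $(x^*_1,x^*_2)\in\overline N_{A,B}(\bx)$ with $\|x^*_1\|+\|x^*_2\|=1$ has $x^*_1+x^*_2=0$, i.e.\ that the minimum in \eqref{itr_constant_2} is positive, i.e.\ (ii); likewise (viii)$\Leftrightarrow$(iv) via \eqref{itrc_constant_2}. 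Finally, for the convex addendum, Proposition~\ref{p:relation_cons}(iii) gives $\itrc=\str$ when $A,B$ are convex, so (iv) $\Leftrightarrow$ $\str>0$ $\Leftrightarrow$ (ix) by Definition~\ref{D3}.

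The main obstacle — really the only point requiring care — is the passage from the strict pointwise inequality ``$\|x^*_1+x^*_2\|>0$ for all admissible pairs'' to the uniform bound ``$\|x^*_1+x^*_2\|>\al$ for a fixed $\al\in\,]0,1[$''. This is where finite dimensionality is essential: the feasible set $S:=\{(x^*_1,x^*_2)\in C\cap\overline N^c_{A,B}(\bx):\|x^*_1\|+\|x^*_2\|=1\}$ is closed (as $C$ and $\overline N^c_{A,B}(\bx)$ are closed cones, by the proposition following Definition~\ref{D2}) and bounded, hence compact, so the continuous function $(x^*_1,x^*_2)\mapsto\|x^*_1+x^*_2\|$ attains its infimum on $S$; if that infimum is positive we take $\al$ to be any number strictly between $0$ and it (and if $S=\emptyset$ we use the convention that the minimum over the empty set is $1$ and take $\al$ arbitrary in $]0,1[$). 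One should also record the trivial bound $\|x^*_1+x^*_2\|\le\|x^*_1\|+\|x^*_2\|=1$, which guarantees $\al<1$ as required. With these observations in place the proof is a bookkeeping exercise assembling Corollary~\ref{c:equivalence}, Remark~\ref{r:itrc<}, Remark~\ref{r:about C}, Proposition~\ref{p:relation_cons} and the representations \eqref{itr_constant_2}--\eqref{itrc_constant_3}.
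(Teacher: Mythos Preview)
Your proposal is correct and follows precisely the approach the paper intends. The paper does not give an explicit proof of this proposition; it simply introduces it with the sentence ``In view of \eqref{itr_constant_3} and \eqref{itrc_constant_3} and Corollary~\ref{c:equivalence}, a number of characterizations of intrinsic transversality can be recast in the next proposition, which slightly extends the list in \cite[Theorems 5 and 6]{Kru18}.'' You have supplied exactly the routine verification the authors leave implicit, drawing on Remark~\ref{r:itrc<}, Corollary~\ref{c:equivalence}, Remark~\ref{r:about C}, Proposition~\ref{p:relation_cons}, and the representations \eqref{itr_constant_2}--\eqref{itrc_constant_3}, together with the compactness argument in finite dimensions that the paper tacitly assumes when passing from the $\liminf$ definitions to the $\min$ formulas \eqref{itr_constant_2}--\eqref{itrc_constant_2}.

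One very minor point of presentation: in the regime $\itrc>1/\sqrt2$ you write that (v) ``holds with $\al=1/\sqrt2$'', but Theorem~\ref{t:question 3} only gives $\itr\ge1/\sqrt2$, not strict inequality, so the correct claim is that (v) holds with any $\al\in\,]0,1/\sqrt2[$. This is cosmetic and does not affect the argument.
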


\begin{remark} It is also possible to establish an expression for $\itrd{w}$ analogous to \eqref{itr_constant_3} and \eqref{itrc_constant_3} for $\itr$ and $\itrc$.
	Then two more characterizations of intrinsic transversality deduced from such an expression could be added to the list in Proposition \ref{p:characterizations of itr}.
	This task would lead to the introduction of another cone object containing $\overline{N}^c_{A,B}(\bar x)$ and being contained in $\overline{N}_{A,B}(\bar x)$.
	The analysis of such a cone object is indeed a research question asked by Kruger \cite[question 5, page 140]{Kru18}.
	However, we will show later on in Theorem \ref{t:question 4} that the latter two cones are indeed equal when restricted in the cone $C$ that is the case of our main interest (see Remark \ref{r:about C}).
	Hence, we choose not to give details about this task for simplicity in terms of presentation.  
\end{remark}

We now reveal a deeper relationship between $\lncone{A,B}(\bx)$ and $\lncone{A,B}^c(\bx)$.
This result complements Corollary \ref{c:equivalence} in the Euclidean space setting and further clarifies the characterization of intrinsic transversality in terms of (restricted) relative limiting normals.
Apart from the latter application, the next theorem was also inspired by the importance of the cones themselves, see \cite[page 123]{Kru18}. 

\begin{theorem}\label{t:question 4}
	Let $\{A,B\}$ be a pair of closed sets in a Euclidean space and $\bx\in A\cap B$.
	Then
	\begin{equation*}
	\lncone{A,B}^c(\bx) \cap C  =  \lncone{A,B}(\bx)  \cap C.
	\end{equation*}
\end{theorem}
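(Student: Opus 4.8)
The inclusion $\lncone{A,B}^c(\bx)\cap C\subset\lncone{A,B}(\bx)\cap C$ is already known (every pair of restricted relative limiting normals is a pair of relative limiting normals), so the real task is the reverse inclusion. The plan is to take an arbitrary pair $(x_1^*,x_2^*)\in\lncone{A,B}(\bx)\cap C$ and produce the sequences required in Definition~\ref{D2}(ii). By definition of $\lncone{A,B}(\bx)$ we are handed sequences $a_k\to\bx$ in $A\setminus B$, $b_k\to\bx$ in $B\setminus A$, $x_k\to\bx$, and $x_{1k}^*\in N_A(a_k)$, $x_{2k}^*\in N_B(b_k)$ with $x_{1k}^*\to x_1^*$, $x_{2k}^*\to x_2^*$, together with the three ratio conditions $\frac{\norm{x_k-a_k}}{\norm{x_k-b_k}}\to1$, $\frac{\ang{x_{1k}^*,x_k-a_k}}{\norm{x_{1k}^*}\norm{x_k-a_k}}\to1$, $\frac{\ang{x_{2k}^*,x_k-b_k}}{\norm{x_{2k}^*}\norm{x_k-b_k}}\to1$. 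The goal is, for each $k$ (or at least along a subsequence), to replace $x_k$ by a nearby point $x_k'$ satisfying $\norm{x_k'-a_k}=\norm{x_k'-b_k}$, and to replace $x_{1k}^*,x_{2k}^*$ by vectors ${x_{1k}^*}',{x_{2k}^*}'$ that still converge to $x_1^*,x_2^*$, lie $o(1)$-close to $N_A(a_k),N_B(b_k)$, and satisfy the \emph{exact} alignment equalities $\ang{{x_{ik}^*}',x_k'-\cdot}=\norm{{x_{ik}^*}'}\norm{x_k'-\cdot}$.

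The key idea is to reuse the geometric construction already carried out in the proof of Theorem~\ref{t:question 3}, but now in the limiting (sequential) rather than the neighborhood-quantified regime. First I would handle the alignment of the normal vectors: the conditions $\frac{\ang{x_{ik}^*,\cdot}}{\norm{x_{ik}^*}\norm{\cdot}}\to1$ mean, as in \eqref{gap_2}, that $\frac{x_{1k}^*}{\norm{x_{1k}^*}}$ is asymptotically parallel to $\frac{x_k-a_k}{\norm{x_k-a_k}}$ and similarly for the second pair; so it is natural to \emph{define} ${x_{ik}^*}'$ to be the appropriately scaled multiple of $x_k'-a_k$ (resp. $x_k'-b_k$), exactly as in \eqref{x^*'}. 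For the point $x_k'$ one distinguishes, as in the two cases of Theorem~\ref{t:question 3}, whether $\ang{x_k-a_k,x_k-b_k}>0$ or $\le0$. In the first case $x_k$ itself is already ``almost'' equidistant and a direct perturbation suffices; in the second, $x_k'$ is the orthogonal-projection-type point defined by \eqref{x'}, and the estimates \eqref{Step1}--\eqref{est_term1} show $\norm{x_k-x_k'}\to0$ and that $\frac{x_k'-a_k}{\norm{x_k'-a_k}}-\frac{x_k-a_k}{\norm{x_k-a_k}}\to0$, hence ${x_{ik}^*}'\to x_i^*$ and $\dist({x_{ik}^*}',N_A(a_k))\to0$. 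One must also verify $x_k'\to\bx$, which follows from the displayed bound $\norm{x_k'-\bx}\le3\delta_k'$-type estimate in that proof. At this point the sequences $(a_k),(b_k),(x_k'),({x_{1k}^*}'),({x_{2k}^*}')$ witness $(x_1^*,x_2^*)\in\lncone{A,B}^c(\bx)$.

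There is one genuine subtlety, and I expect it to be the main obstacle: the whole construction of $x_k'$ and the resulting estimates require $a_k\ne b_k$ (so that $b-a\ne0$ in \eqref{x'}), and more importantly the case split and the bounds \eqref{Step1}--\eqref{Step2} implicitly use that the relevant denominators $\norm{x_k-a_k}$, $\norm{x_k'-a_k}$ do not degenerate relative to $\norm{x_k-x_k'}$; near-degenerate configurations (e.g. $x_k$ very close to $a_k$ or $b_k$, or $x_k=a_k=b_k$ handled by the $\tfrac00=1$ convention) need to be treated separately. I would dispose of these by noting that if infinitely many $x_k$ coincide with $a_k$ (or $b_k$) then the ratio conditions force $x_1^*$ (or $x_2^*$) to be arbitrary unit-normalizable and one can simply take $x_k'=x_k$; and the case $a_k=b_k$ is excluded since $a_k\in A\setminus B$, $b_k\in B\setminus A$. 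The remaining verification --- that $C$-membership $\ang{x_1^*,x_2^*}\le0$ is preserved, which is automatic since we never touch the limit pair $(x_1^*,x_2^*)$, only the approximating sequences --- is immediate. Assembling these pieces gives $\lncone{A,B}(\bx)\cap C\subset\lncone{A,B}^c(\bx)\cap C$ and hence the claimed equality. I would streamline the write-up by citing the relevant displayed inequalities from the proof of Theorem~\ref{t:question 3} rather than re-deriving them, since the construction is literally the same with ``$\delta'\downarrow0$'' replaced by ``$k\to\infty$''.
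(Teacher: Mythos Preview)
Your overall architecture is right---take the defining sequences for $(x_1^*,x_2^*)\in\lncone{A,B}(\bx)$, project $x_k$ onto the bisecting hyperplane to obtain $x_k'$ with $\norm{x_k'-a_k}=\norm{x_k'-b_k}$, and define ${x_{ik}^*}'$ as in \eqref{x^*'}---and this is exactly what the paper does. But there is a genuine gap in how you treat the constraint set $C$.

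You write that ``$C$-membership \ldots\ is automatic since we never touch the limit pair.'' This misreads the role of $C$. The condition $\ip{x_1^*}{x_2^*}\le 0$ is not something to be \emph{preserved}; it is something the proof must actively \emph{use}. The critical estimate is that $\norm{x_k-x_k'}/\norm{x_k-a_k}\to 0$ (so that $\tfrac{x_k'-a_k}{\norm{x_k'-a_k}}-\tfrac{x_k-a_k}{\norm{x_k-a_k}}\to 0$ and hence ${x_{1k}^*}'\to x_1^*$). In the paper's argument this comes from the chain: $\ip{x_1^*}{x_2^*}\le 0$ $\Rightarrow$ $\bigl\|\tfrac{x_1^*}{\norm{x_1^*}}-\tfrac{x_2^*}{\norm{x_2^*}}\bigr\|\ge\sqrt{2}$ $\Rightarrow$ $\ip{x_k-a_k}{x_k-b_k}$ is bounded above by $o(1)\cdot\norm{x_k-b_k}^2$ $\Rightarrow$ $\ip{b_k-a_k}{x_k-b_k}<0$ for large $k$, which is exactly the ingredient replacing the Case~2 hypothesis \eqref{Case2_ass} of Theorem~\ref{t:question 3}. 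Without it the estimate can fail: take $a_k=(\eta_k^2,0)$, $b_k=(-\eta_k^2,0)$, $x_k=(\eta_k,\eta_k)$ with $\eta_k\downarrow 0$. Then $\tfrac{\norm{x_k-a_k}}{\norm{x_k-b_k}}\to 1$, $\ip{x_k-a_k}{x_k-b_k}>0$ (your Case~1), yet $x_k'=(0,\eta_k)$ and $\norm{x_k-x_k'}/\norm{x_k-a_k}\to 1/\sqrt{2}\not\to 0$. Here the normalized directions $\tfrac{x_k-a_k}{\norm{x_k-a_k}}$ and $\tfrac{x_k-b_k}{\norm{x_k-b_k}}$ both converge to $(1,1)/\sqrt{2}$, so any compatible limit pair has $\ip{x_1^*}{x_2^*}>0$, i.e., lies outside $C$. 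The $C$ hypothesis is precisely what excludes such configurations.

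Correspondingly, your case split does not transfer from Theorem~\ref{t:question 3}. There, Case~1 was dispatched by directly bounding $\norm{x_1^*+x_2^*}$ from below---no $x'$ was ever constructed---because that was the goal. Here the goal is to \emph{build} sequences for $\lncone{A,B}^c(\bx)$, so Case~1 would still require producing an equidistant point, and your ``direct perturbation suffices'' is not justified (the orthogonal projection is already the minimal perturbation, and the example shows it can fail). The correct fix is to drop the case split, always use the projection formula, and invoke $\ip{x_1^*}{x_2^*}\le 0$ to push through the analogue of \eqref{<x-b,b-a> >0}; this is what the paper does.
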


\begin{proof}
	It is known by \cite[Proposition 2$(ii)$]{Kru18} that $\lncone{A,B}^c(\bx)  \subset  \lncone{A,B}(\bx)$.
	In this proof, we prove the inverse inclusion
	\begin{equation*}
	\lncone{A,B}(\bx)  \cap C  \subset  \lncone{A,B}^c(\bx)  \cap C.
	\end{equation*}
	Let us take any $(x_1^*,x_2^*) \in \lncone{A,B}(\bx)  \cap C$.
	Then by the definition of $\lncone{A,B}(\bx)$, there exist sequences $(a_k)\subset A\setminus B$, $(b_k)\subset B\setminus A$, $(x_k)\subset X$ and $(x_{1k}^*), (x_{2k}^*)\subset X^*$ such that $x_k\ne a_k$, $x_k\ne b_k$ $(k=1,2,\ldots)$, $a_k\to \bx$, $b_k\to \bx$, $x_k\to \bx$, $x_{1k}^*\to x_1^*$, $x_{2k}^*\to x_2^*$ and
	\begin{gather}\nonumber
	x_{1k}^*\in \ncone{A}(a_k),\quad x_{2k}^*\in \ncone{B}(b_k)\quad (k=1,2,\ldots),\\
	\label{2c_2}
	\frac{\|x_k-a_k\|}{\|x_k-b_k\|} \to 1,\quad \frac{\ip{x_{1k}^*}{x_k-a_k}}{\norm{x_{1k}^*}\|x_k-a_k\|} \to 1,\quad \frac{\ip{x_{2k}^*}{x_k-b_k}}{\norm{x_{1k}^*}\|x_k-b_k\|} \to 1.
	\end{gather}
	Since $(x_1^*,x_2^*) \in C$, it holds that $\ip{x_1^*}{x_2^*} \le 0$, equivalently,
	\begin{equation}\label{<0}
	\norm{\frac{x_1^*}{\norm{x_1^*}}-\frac{x_2^*}{\norm{x_2^*}}} \ge \sqrt{2}.
	\end{equation}
	To complete the proof, it suffices to prove that $(x_1^*,x_2^*) \in \lncone{A,B}^c(\bx)$.
	For each $k=1,2,\ldots$, let us define: 
	\begin{gather}\nonumber
	m_k := \frac{a_k+b_k}{2},\\
	\label{x'_k}
	x_k' := x_k - \frac{\ip{b_k-a_k}{x_k-m_k}}{\|b_k-a_k\|^2}(b_k-a_k),\\
	\label{x^*'_k}
	{x_{1k}^*}' := \frac{\|x_{1k}^*\|}{\|x_k'-a_k\|}(x_k'-a_k),\quad   {x_{2k}^*}' := \frac{\|x_{2k}^*\|}{\|x_k'-b_k\|}(x_k'-b_k).
	\end{gather}
	All we need is to verify the following four conditions:
	\begin{enumerate}
		\item\label{i}
		\begin{equation}\label{|x'-a|=|x'-b|_k}
		\|x_k'-a_k\|=\|x_k'-b_k\|\quad (k=1,2,\ldots);
		\end{equation}
		\item\label{ii}
		\begin{equation*}
		x_k'\to \bx,\quad {x_{1k}^*}'\to x_1^*,\quad {x_{2k}^*}'\to x_2^* \quad (k=1,2,\ldots);
		\end{equation*}		
		\item\label{iii}
		\begin{equation*}
		\dist\paren{{x_{1k}^*}',\ncone{A}(a_k)} \to 0,\quad \dist\paren{{x_{2k}^*}',\ncone{B}(b_k)} \to 0;
		\end{equation*}		
		\item\label{iv}
		\begin{equation*}
		\ip{{x_{1k}^*}'}{x_k'-a_k} = \|{x_{1k}^*}'\|\|x_k'-a_k\|,\quad
		\ip{{x_{2k}^*}'}{x_k'-b_k} = \|{x_{2k}^*}'\|\|x_k'-b_k\|.
		\end{equation*}
	\end{enumerate}
	
	\emph{Condition \eqref{i}:} this follows from \eqref{x'_k} since for each $k=1,2,\ldots$, we have that
	\begin{align*}
	\|x_k'-a_k\|^2-\|x_k'-b_k\|^2 &= \|x_k-a_k\|^2-\|x_k-b_k\|^2 - 2\frac{\ip{b_k-a_k}{x_k-m_k}}{\|b_k-a_k\|^2}\ip{x_k-a_k}{b_k-a_k}\\
	&+ 2\frac{\ip{b_k-a_k}{x_k-m_k}}{\|b_k-a_k\|^2}\ip{x_k-b_k}{b_k-a_k}\\
	&= \|x_k-a_k\|^2-\|x_k-b_k\|^2 - 2\frac{\ip{b_k-a_k}{x_k-m_k}}{\|b_k-a_k\|^2}\ip{b_k-a_k}{b_k-a_k}\\
	&= \|x_k-a_k\|^2-\|x_k-b_k\|^2 - 2\ip{b_k-a_k}{x_k-m_k}\\
	&= \|x_k-a_k\|^2-\|x_k-b_k\|^2 - \ip{(x_k-a_k)-(x_k-b_k)}{(x_k-a_k) + (x_k-b_k)}=0.
	\end{align*}
	
	\emph{Condition \eqref{iv}:} from \eqref{x^*'_k}, we have that
	\begin{gather*}
	\|{x_{1k}^*}'\| = \|x_{1k}^*\|,\; \|{x_{2k}^*}'\| = \|x_{2k}^*\|,\\
	\ip{{x_{1k}^*}'}{x_k'-a_k} = \|{x_{1k}^*}\|\|x_k'-a_k\| = \|{x_{1k}^*}'\|\|x_k'-a_k\|,\\
	\ip{{x_{2k}^*}'}{x_k'-b_k} = \|{x_{2k}^*}\|\|x_k'-b_k\|= \|{x_{2k}^*}'\|\|x_k'-b_k\|.
	\end{gather*}
	
	\emph{Condition \eqref{iii}:} since $x_{1k}^*\in \ncone{A}(a_k)$ and $x_{2k}^*\in \ncone{A}(b_k)$ $(k=1,2,\ldots)$, whenever condition \eqref{ii} has been verified, we have that
	\begin{gather*}
	\dist\paren{{x_{1k}^*}',\ncone{A}(a_k)} \le \norm{{x_{1k}^*}'-x_{1k}^*} \le \norm{{x_{1k}^*}'-x_1^*}+ \norm{{x_{1k}^*}-x_1^*} \to 0,\\
	\dist\paren{{x_{2k}^*}',\ncone{B}(b_k)} \le \norm{{x_{2k}^*}'-x_{2k}^*} \le \norm{{x_{2k}^*}'-x_2^*}+ \norm{{x_{2k}^*}-x_2^*} \to 0.
	\end{gather*}
	
	\emph{Condition \eqref{ii}:} since $x_k\to \bx$, $a_k\to \bx$ and $b_k\to \bx$, it holds by \eqref{x'_k} that
	\begin{align*}
	\norm{x_k' - x_k} = \norm{\frac{\ip{b_k-a_k}{x_k-m_k}}{\|b_k-a_k\|^2}(b_k-a_k)} \le \norm{x_k-m_k}
	=  \norm{x_k-\frac{a_k+b_k}{2}} \to 0.
	\end{align*}
	Then
	\begin{align*}
	\norm{x_k' - \bx} \le \norm{x_k' - x_k} + \norm{x_k - \bx} \to 0.
	\end{align*}
	In the remainder of the proof, we show that ${x_{1k}^*}'\to x_1^*$ while the condition ${x_{2k}^*}'\to x_2^*$ is obtained in a similar manner. Since ${x_{1k}^*}\to x_1^*$, we need to show that $\norm{{x_{1k}^*}'-{x_{1k}^*}}\to 0$.
	Note that by \eqref{x^*'_k} it holds that
	\begin{equation}\label{key est_1_k}
	\begin{aligned}
	\|{x_{1k}^*}'-x_{1k}^*\| &= \norm{\frac{\|x_{1k}^*\|}{\|x_k'-a_k\|}(x_k'-a_k)-x_{1k}^*} = \|x_{1k}^*\|\norm{\frac{x_k'-a_k}{\|x_k'-a_k\|}-\frac{x_{1k}^*}{\|x_{1k}^*\|}}\\
	&\le \|x_{1k}^*\|\paren{\norm{\frac{x_k-a_k}{\|x_k-a_k\|}-\frac{x_{1k}^*}{\|x_{1k}^*\|}}+
		\norm{\frac{x_k'-a_k}{\|x_k'-a_k\|}-\frac{x_k-a_k}{\|x_k-a_k\|}}}.
	\end{aligned}
	\end{equation}
	Note also that due to \eqref{2c_2},
	\begin{equation}\label{key est_2_k}
	\norm{\frac{x_k-a_k}{\|x_k-a_k\|}-\frac{x_{1k}^*}{\|x_{1k}^*\|}} = \sqrt{2 - 2\frac{\ip{x_{1k}^*}{x_k-a_k}}{\norm{x_{1k}^*}\|x_k-a_k\|}}\; \to 0.
	\end{equation}
	In view of \eqref{key est_1_k} and \eqref{key est_2_k}, to obtain $\norm{{x_{1k}^*}'-{x_{1k}^*}}\to 0$, it suffices to prove that
	\begin{align*}
	\norm{\frac{x_k'-a_k}{\|x_k'-a_k\|}-\frac{x_k-a_k}{\|x_k-a_k\|}} \to 0.
	\end{align*}
	To proceed, let us take any number $\eps>0$ which can be arbitrarily small and show the existence of an natural $N\in \mathbb{N}$ such that
	\begin{equation}\label{main_est_q4}
	\norm{\frac{x_k'-a_k}{\|x_k'-a_k\|}-\frac{x_k-a_k}{\|x_k-a_k\|}} < \eps, \quad \forall k\ge N.
	\end{equation}
	Choose a number $\eps'>0$ and satisfying
	\begin{align}\label{eps'}
	2\sqrt{\frac{2\eps'-\eps'^2}{4-6\eps'+3\eps'^2}}\;<\eps.
	\end{align}
	Such a number $\eps'$ exists since $\eps>0$ and
$\lim_{t\downarrow 0}\; 2\sqrt{\frac{2t-t^2}{4-6t+3t^2}}\;=0$.\\
	By the convergence conditions in \eqref{2c_2}, there exists a natural number $N\in \mathbb{N}$ such that $\forall k\ge N$,
	\begin{gather}\label{2c_2_eps}
	1-\eps' < \frac{\|x_k-a_k\|}{\|x_k-b_k\|} < 1+\eps',\\
	\label{2c_2_eps1}
	\frac{\ip{x_{1k}^*}{x_k-a_k}}{\norm{x_{1k}^*}\|x_k-a_k\|} > 1-\eps',\quad \frac{\ip{x_{2k}^*}{x_k-b_k}}{\norm{x_{1k}^*}\|x_k-b_k\|} > 1-\eps'.
	\end{gather}
	The estimates in \eqref{2c_2_eps1} amount to
	\begin{gather}\label{2c_2_eps2}
	\norm{\frac{x_{1k}^*}{\norm{x_{1k}^*}}-\frac{x_k-a_k}{\norm{x_k-a_k}}} < \sqrt{2\eps'},\quad \norm{\frac{x_{2k}^*}{\norm{x_{2k}^*}}-\frac{x_k-b_k}{\norm{x_k-b_k}}} < \sqrt{2\eps'}.
	\end{gather}

	In order to prove \eqref{main_est_q4}, we first note that
	\begin{equation}\label{ip=0_k}
	\ip{x_k-x_k'}{x_k'-m_k}=0.
	\end{equation}
	Indeed, by \eqref{x'_k}, it holds that
	\begin{equation*}
	\ip{x_k-x_k'}{x_k'-m_k} = \frac{\ip{b_k-a_k}{x_k-m_k}}{\|b_k-a_k\|^2}\ip{b_k-a_k}{x_k'-m_k},
	\end{equation*}
	from which \eqref{ip=0_k} follows since
	\begin{align*}
	\ip{b_k-a_k}{x_k'-m_k} &= \ip{b_k-a_k}{x_k-m_k-\frac{\ip{b_k-a_k}{x_k-m_k}}{\|b_k-a_k\|^2}(b_k-a_k)}\\
	&= \ip{b_k-a_k}{x_k-m_k}-\frac{\ip{b_k-a_k}{x_k-m_k}}{\|b_k-a_k\|^2}\ip{b_k-a_k}{b_k-a_k}=0.
	\end{align*}
	
	Second, we show that $\forall k\ge N$,
	\begin{equation}\label{Step1_k}
	\|x_k-x_k'\|^2 \le \frac{2\eps'-\eps'^2}{4(1-\eps')^2}\min\left\{\|x_k-a_k\|^2,\|x_k-b_k\|^2\right\}.
	\end{equation}

If $\|x_k-a_k\|\ge\|x_k-b_k\|$, then
	\begin{gather}\label{<x-m,b-a> >0_k}
\ip{b_k-a_k}{x_k-m_k} \ge 0.
	\end{gather}
	Note from \eqref{<0} and \eqref{2c_2_eps2} that
	\begin{align*}
	\norm{\frac{x_k-a_k}{\norm{x_k-a_k}}-\frac{x_k-b_k}{\norm{x_k-b_k}}} &\ge \norm{\frac{x_{1k}^*}{\norm{x_{1k}^*}}-\frac{x_{2k}^*}{\norm{x_{2k}^*}}} - \norm{\frac{x_{1k}^*}{\norm{x_{1k}^*}}-\frac{x_k-a_k}{\norm{x_k-a_k}}} - \norm{\frac{x_{2k}^*}{\norm{x_{2k}^*}}-\frac{x_k-b_k}{\norm{x_k-b_k}}}\\
	&> \sqrt{2} - 2\sqrt{2\eps'}.
	\end{align*}
	This combining with \eqref{2c_2_eps} yields that
	\begin{equation*}
	\ip{x_k-a_k}{x_k-b_k} < 4\paren{\sqrt{\eps'}-\eps'}\|x_k-a_k\|\|x_k-b_k\| < 4\paren{\sqrt{\eps'}-\eps'}(1+\eps')\|x_k-b_k\|^2.
	\end{equation*}
	Then
	\begin{equation}\label{<x-b,b-a> >0_k}
	\begin{aligned}
	\ip{b_k-a_k}{x_k-b_k} &= \ip{b_k-x_k}{x_k-b_k} + \ip{x_k-a_k}{x_k-b_k}\\
	&= -\norm{x_k-b_k}^2 + \ip{x_k-a_k}{x_k-b_k}\\
	&< -\norm{x_k-b_k}^2 + 4\paren{\sqrt{\eps'}-\eps'}(1+\eps')\|x_k-b_k\|^2\\
	&= -\paren{1-4\paren{\sqrt{\eps'}-\eps'}(1+\eps')}\|x_k-b_k\|^2<0.
	\end{aligned}
	\end{equation}
	From \eqref{x'_k}, \eqref{<x-m,b-a> >0_k} and \eqref{<x-b,b-a> >0_k} we have that
	\begin{equation}\label{|x'-b|>|x-b|_k}
	\begin{aligned}
	\|x_k'-b_k\|^2 - \|x_k-b_k\|^2 &= \|x_k'-x_k\|^2+2\ip{x_k'-x_k}{x_k-b_k}\\
	&= \|x_k'-x_k\|^2 - 2\frac{\ip{b_k-a_k}{x_k-m_k}}{\|b_k-a_k\|^2}\ip{b_k-a_k}{x_k-b_k}\\
	&\ge \|x_k'-x_k\|^2.
	\end{aligned}
	\end{equation}
	By \eqref{|x'-a|=|x'-b|_k} and \eqref{ip=0_k} we get that
	\begin{align*}
	\|x_k-a_k\|^2+\|x_k-b_k\|^2 &= 2\|x_k-x_k'\|^2 + \|x_k'-a_k\|^2 + \|x_k'-b_k\|^2 + 2\ip{x_k-x_k'}{2x_k'-(a_k+b_k)} \\
	&= 2\|x_k-x_k'\|^2 + 2\|x_k'-b_k\|^2 + 4\ip{x_k-x_k'}{x_k'-m_k}\\
	&= 2\|x_k-x_k'\|^2 + 2\|x_k'-b_k\|^2.
	\end{align*}
	This together with \eqref{2c_2_eps} and \eqref{|x'-b|>|x-b|_k} yields
	\begin{align*}
	2\|x_k-x_k'\|^2 &= \|x_k-a_k\|^2+\|x_k-b_k\|^2 - 2\|x_k'-b_k\|^2\\
	&\le (1+\eps')^2\|x_k-b_k\|^2 +\|x_k-b_k\|^2 - 2\|x_k'-b_k\|^2\\
	&\le (1+\eps')^2\|x_k-b_k\|^2 +\|x_k-b_k\|^2 - 2\paren{\|x_k-b_k\|^2+\|x_k'-x_k\|^2}.
	\end{align*}
	Hence
	\begin{equation}\label{Step1_case1_k}
	4\|x_k-x_k'\|^2 \le \paren{2\eps'+\eps'^2}\|x_k-b_k\|^2 = \paren{2\eps'+\eps'^2}\min\{\|x_k-a_k\|^2,\|x_k-b_k\|^2\}
	\end{equation}
	since $\|x_k-a_k\|\ge \|x_k-b_k\|$ in this case.

By a similar argument, if $\|x_k-a_k\|\le \|x_k-b_k\|$, then
\[
\ip{b_k-a_k}{x_k-m_k} \le 0,\;
\ip{b_k-a_k}{x_k-a_k} \ge 0.
\]
Thus
	\begin{equation}\label{|x'-a|>|x-a|_k}
	\begin{aligned}
\|x_k'-a_k\|^2 - \|x_k-a_k\|^2 &= \|x_k'-x_k\|^2+2\ip{x_k'-x_k}{x_k-a_k}\\
	&= \|x_k'-x_k\|^2 - 2\frac{\ip{b_k-a_k}{x_k-m_k}}{\|b_k-a_k\|^2}\ip{b_k-a_k}{x_k-a_k}\\
	&\ge \|x_k'-x_k\|^2.
	\end{aligned}
	\end{equation}
By \eqref{|x'-a|=|x'-b|_k} and \eqref{ip=0_k} we get that
	\begin{align*}
	\|x_k-a_k\|^2+\|x_k-b_k\|^2 = 2\|x_k-x_k'\|^2 + 2\|x_k'-a_k\|^2,
	\end{align*}
which together with \eqref{2c_2_eps} and \eqref{|x'-a|>|x-a|_k} yields that
	\begin{align*}
	2\|x_k-x_k'\|^2 \le \|x_k-a_k\|^2 + \frac{1}{(1-\eps')^2}\|x_k-a_k\|^2 - 2\paren{\|x_k-a_k\|^2+\|x_k'-x_k\|^2}.
	\end{align*}
Equivalently,
	\begin{equation}\label{Step1_case2_k}
	4\|x_k-x_k'\|^2 \le \frac{2\eps'-{\eps'}^2}{(1-\eps')^2}\|x_k-a_k\|^2 = \frac{2\eps'-\eps'^2}{(1-\eps')^2}\min\{\|x_k-a_k\|^2,\|x_k-b_k\|^2\}
	\end{equation}
	since $\|x_k-a_k\|\le \|x_k-b_k\|$ in this case.

Combining \eqref{Step1_case1_k} and \eqref{Step1_case2_k} and noting that $2\eps'+\eps'^2 < \frac{2\eps'-\eps'^2}{(1-\eps')^2}$, we obtain \eqref{Step1_k} as claimed.
	
	Third, we show that $\forall k\ge N$
	\begin{align}\label{Step2_k}
	\|x_k'-a_k\|^2 \ge \frac{4-6\eps'+3\eps'^2}{2\eps'-\eps'^2}\|x_k-x_k'\|^2.
	\end{align}
Indeed, if $\|x_k-a_k\|\le \|x_k-b_k\|$, then the use of \eqref{|x'-a|>|x-a|_k} and \eqref{Step1_case2_k} yields \eqref{Step2_k}:
	\begin{align*}
	 \|x_k'-a_k\|^2 &\ge \|x_k-a_k\|^2+\|x_k-x_k'\|^2\\
	&\ge \frac{4(1-\eps')^2)}{2\eps'-\eps'^2}\|x_k-x_k'\|^2+\|x_k-x_k'\|^2 = \frac{4-6\eps'+3\eps'^2}{2\eps'-\eps'^2}\|x_k-x_k'\|^2.
	\end{align*}
		Otherwise, i.e., $\|x_k-a_k\|\ge \|x_k-b_k\|$, then the use of \eqref{|x'-a|=|x'-b|_k}, \eqref{|x'-b|>|x-b|_k} and \eqref{Step1_k} successively implies that
	\begin{align*}
	&\|x_k'-a_k\|^2 = \|x_k'-b_k\|^2 \ge \|x_k-b_k\|^2+\|x_k-x_k'\|^2\\
	\ge &\frac{4}{2\eps'+\eps'^2}\|x_k-x_k'\|^2+\|x_k'-x_k\|^2 = \frac{4+2\eps'+\eps'^2}{2\eps'+\eps'^2}\|x_k-x_k'\|^2,
	\end{align*}
which also yields \eqref{Step2_k} since $\frac{4+2\eps'+\eps'^2}{2\eps'+\eps'^2} > \frac{4-6\eps'+3\eps'^2}{2\eps'-\eps'^2}$.
Hence \eqref{Step2_k} has been proved.
	
	Fourth, we show that $\forall k\ge N$
	\begin{align}\label{Step3_k}
	\norm{\frac{x_k'-a_k}{\|x_k'-a_k\|}-\frac{x_k-a_k}{\|x_k-a_k\|}} \le 2\frac{\|x_k-x_k'\|}{\|x_k'-a_k\|}.
	\end{align}
	Indeed,
	\begin{align*}
	\norm{\frac{x_k'-a_k}{\|x_k'-a_k\|}-\frac{x_k-a_k}{\|x_k-a_k\|}} &\le \norm{\frac{x_k'-a_k}{\|x_k'-a_k\|}-\frac{x_k-a_k}{\|x_k'-a_k\|}} + \norm{\frac{x_k-a_k}{\|x_k'-a_k\|}-\frac{x_k-a_k}{\|x_k-a_k\|}}\\
	&= \frac{\|x_k-x_k'\|}{\|x_k'-a_k\|} + \left|\frac{\|x_k-a_k\|}{\|x_k'-a_k\|}-1\right|.
	\end{align*}
	If $\|x_k-a_k\|\ge \|x_k'-a_k\|$, then \eqref{Step3_k} holds true since
	\begin{align*}
	\left|\frac{\|x_k-a_k\|}{\|x_k'-a_k\|}-1\right| = \frac{\|x_k-a_k\|}{\|x_k'-a_k\|}-1\le  \frac{\|x_k-x_k'\|+\|x_k'-a_k\|}{\|x_k'-a_k\|}-1
	=\frac{\|x_k-x_k'\|}{\|x_k'-a_k\|}.
	\end{align*}
	Otherwise, i.e., $\|x_k-a_k\|<\|x_k'-a_k\|$, \eqref{Step3_k} also holds true since
	\begin{align*}
	\left|\frac{\|x_k-a_k\|}{\|x_k'-a_k\|}-1\right| = 1- \frac{\|x_k-a_k\|}{\|x_k'-a_k\|}
	\le 1- \frac{\|x_k'-a_k\|-\|x_k-x_k'\|}{\|x_k'-a_k\|}
	= \frac{\|x_k-x_k'\|}{\|x_k'-a_k\|}.
	\end{align*}
	Hence \eqref{Step3_k} has been proved.
	
	Finally, a combination of \eqref{Step2_k}, \eqref{Step3_k} and \eqref{eps'} yields that
	\begin{align*}
	\norm{\frac{x_k'-a_k}{\|x_k'-a_k\|}-\frac{x_k-a_k}{\|x_k-a_k\|}}\le 2\frac{\|x_k-x_k'\|}{\|x_k'-a_k\|} \le 2\sqrt{\frac{2\eps'-\eps'^2}{4-6\eps'+3\eps'^2}}\;<\eps,\quad \forall k\ge N,
	\end{align*}
	which is \eqref{main_est_q4} and hence the proof is complete.
	\qed
\end{proof}

\begin{remark}
In the Euclidean space setting, thanks to \eqref{itr_constant_3} and \eqref{itrc_constant_3}, Corollary \ref{c:equivalence} can easily be deduced from Theorem \ref{t:question 4}.
But the inverse implication is not trivial since the minimal values at \eqref{itr_constant_3} and \eqref{itrc_constant_3} being equal does not tell much about the relationship between the two feasibility sets there. 	
\end{remark}

\section{Primal Space Characterizations of Intrinsic Transversality}\label{s:primal}

In the Hilbert space setting, it can be deduced from Proposition \ref{p:relation_cons}(iii) and Corollary \ref{c:equivalence} that for pairs of closed and convex sets, intrinsic transversality is equivalent to subtransversality which is a primal space property.
The situation for pairs of nonconvex sets has not been known and there is an interest to research for primal space counterparts of intrinsic transversality in this setting.
This research question was raised by Ioffe \cite[page 358]{Iof17}.
Our agenda in this section is to present material sufficient for formulating a primal characterization of intrinsic transversality in the Hilbert space setting.

In the sequel, we always assume that the Cartesian product space $X\times X$ is endowed with the maximum norm and accordingly define the distance between two subsets of as follows: for any $P,Q \subset X\times X$,
\begin{equation}\label{dist in X^2}
\begin{aligned}
\dist\paren{P,Q} :=&\; \inf_{(p_1,p_2)\in P,(q_1,q_2)\in Q}\; \norm{(p_1,p_2)-(q_1,q_2)}_{\text{max}}\\
=&\; \inf_{(p_1,p_2)\in P,(q_1,q_2)\in Q}\; \max\{\norm{p_1-q_1},\norm{p_2-q_2}\}
\end{aligned}
\end{equation}
with the convention that the infimum over the empty set equals infinity.
For convenience, for a subset $\Omega\subset X$, we use the following notation:
\begin{equation}\label{[]_2}
[\Omega]_2:=\{(x,x) \in X\times X \mid x\in \Omega\}.
\end{equation}
Note that $[\Omega]_2$ is different from (smaller than) the Cartesian product set $\Omega\times\Omega$.
We frequently use the distance \eqref{dist in X^2} involving a Cartesian product set and a set of form \eqref{[]_2}:
$$
\dist\paren{A\times B,\parenn{\Omega}_2} = \inf_{x\in\Omega,a\in A,b\in B}\; \max\{\norm{x-a},\norm{x-b}\},\quad A,B,\Omega \subset X.
$$

We formulate several technical results which are essential for proving the key estimates in this section.

\begin{proposition}\label{C3} \cite[Corollary 6.3]{BuiKru} \footnote{The result is valid in Banach spaces.}
	Let $\left\{\widetilde A, \widetilde B \right\}$ be a pair of closed sets in $X$, $\bx \in \widetilde A\cap \widetilde B$, $u,v\in X$ and numbers $\rho,\eps >0$.
	Suppose that
	\begin{gather}\label{T3.1.1}
	(\widetilde A-u)\cap (\widetilde B-v)\cap \B_\rho(\bx)=\emptyset,\\
	\label{T3.1.1_Thao}
\max\{\norm{u},\norm{v}\}<\dist\paren{(\widetilde A-u)\times (\widetilde B-v),\parenn{\B_\rho(\bx)}_2}+\eps.
	\end{gather}
	Then, for any numbers $\la\ge \eps+\rho$ and $\tau \in \left]0,\frac{\la-\eps}{\la+\eps}\right[$ there exist points $\tilde a\in \widetilde A\cap \B_\la(\bx)$, $\tilde b\in \widetilde B\cap \B_\la(\bx)$, $\tilde x\in \B_\rho(\bx)$ and vectors $x^*_1,x^*_2\in X^*$ such that
	\begin{gather}
	\label{T3.1.2}
	\norm{x^*_1}+\norm{x_2^*}=1,\quad \norm{x_1^*+x_2^*}<\frac{\eps}{\rho},\\
	\label{T3.1.2.1}
	x_1^*\in N_{\widetilde A}(\tilde a),\quad x_2^*\in N_{\widetilde B}(\tilde b),\\
	\label{T3.1.3}
	\langle x_1^*,\tilde x-\tilde a+u\rangle +\langle x_2^*,\tilde x-\tilde b+v\rangle>\tau \max\{\norm{\tilde x-\tilde a+u},\norm{\tilde x-\tilde b+v}\}.
	\end{gather}
\end{proposition}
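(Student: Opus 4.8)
The plan is to derive the statement from an Ekeland-type variational principle applied to a suitable merit function on the product space $X\times X$. First I would introduce the function
\[
f(x,a,b):=\max\{\norm{x-a+u},\norm{x-b+v}\}
\]
restricted to $(x,a,b)\in\B_\rho(\bx)\times\widetilde A\times\widetilde B$, which is lower semicontinuous and bounded below. Hypothesis \eqref{T3.1.1} guarantees that $f$ is bounded away from $0$ near the ``diagonal'' and in fact its infimum $\eta:=\inf f$ over the relevant set is positive; hypothesis \eqref{T3.1.1_Thao} says precisely that $\max\{\norm u,\norm v\}<\eta+\eps$, so a point $(x_0,a_0,b_0)$ can be chosen that is $\eps$-optimal in the sense $f(x_0,a_0,b_0)<\eta+\eps$ while still $\max\{\norm u,\norm v\}$ is close to $\eta$. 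Applying Ekeland's variational principle (in the product metric space $\B_\rho(\bx)\times\widetilde A\times\widetilde B$, using the max-norm as in \eqref{dist in X^2}) with the right choice of parameters produces a nearby point $(\tilde x,\tilde a,\tilde b)$ which is a genuine minimizer of a perturbed function
\[
(x,a,b)\mapsto f(x,a,b)+\gamma\,\norm{(x,a,b)-(\tilde x,\tilde a,\tilde b)}_{\max},
\]
with $\gamma$ controlled by $\eps/\rho$; the radius bookkeeping is where the assumptions $\la\ge\eps+\rho$ and $\tau\in\,]0,(\la-\eps)/(\la+\eps)[$ enter, ensuring $\tilde a,\tilde b$ land in $\B_\la(\bx)$ and $\tilde x$ stays in the \emph{open} ball $\B_\rho(\bx)$ (so that the ball constraint is inactive and no normal-cone term from $\B_\rho(\bx)$ appears).

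Next I would write down the subdifferential optimality condition for this perturbed minimization. Since $\tilde x$ is interior to $\B_\rho(\bx)$, the constraint set is just $\widetilde A\times\widetilde B$ in the $(a,b)$ variables and unconstrained in $x$; applying the fuzzy sum rule / the appropriate calculus for the Fr\'echet subdifferential in Banach spaces (this is the step that restricts us to Banach spaces, matching the footnote) to the sum of the max-function, the Lipschitz penalty, and the indicator of $\widetilde A\times\widetilde B$, one gets vectors $x_1^*\in N_{\widetilde A}(\tilde a)$, $x_2^*\in N_{\widetilde B}(\tilde b)$ (after absorbing fuzz by moving $\tilde a,\tilde b$ slightly, which is harmless). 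The stationarity in $x$ forces $x_1^*+x_2^*$ to be small — of order $\gamma\sim\eps/\rho$ — giving $\norm{x_1^*+x_2^*}<\eps/\rho$; a normalization $\norm{x_1^*}+\norm{x_2^*}=1$ can be imposed because the subgradients of the max come from the unit sphere of the dual, yielding \eqref{T3.1.2} and \eqref{T3.1.2.1}. Finally, the subgradient of the max-function $f$ at a point where $f>0$ is characterized by its supporting property: any $(\xi_1,\xi_2)$ in $\partial f$ satisfies $\langle\xi_1,\tilde x-\tilde a+u\rangle+\langle\xi_2,\tilde x-\tilde b+v\rangle=f(\tilde x,\tilde a,\tilde b)$ when both entries are active, and in general the combination stays close to this value; translating this through the normalization and the $\eps$-suboptimality produces the marginal inequality \eqref{T3.1.3} with the constant $\tau$, precisely because $f(\tilde x,\tilde a,\tilde b)$ is comparable to $\max\{\norm u,\norm v\}$ up to the $\eps$-error, and $\tau<(\la-\eps)/(\la+\eps)$ is exactly the tolerance that survives.

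The main obstacle I anticipate is the careful parameter bookkeeping in the Ekeland step: one must simultaneously (i) keep $\tilde x$ strictly inside $\B_\rho(\bx)$ so the ball contributes no normal vector, (ii) keep $\tilde a,\tilde b$ inside $\B_\la(\bx)$, (iii) make the penalty constant $\gamma$ small enough that $\norm{x_1^*+x_2^*}<\eps/\rho$, and (iv) retain enough of the $\eps$-suboptimality margin that the supporting inequality \eqref{T3.1.3} comes out with the stated $\tau$ rather than a weaker constant. Balancing these is what dictates the precise hypotheses $\la\ge\eps+\rho$ and the open interval for $\tau$; the subdifferential calculus itself (fuzzy sum rule for Fr\'echet normals, subdifferential of a max of two norms) is standard once the variational-principle setup is correct. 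Since this result is quoted from \cite{BuiKru}, I would simply cite it, but the sketch above is the route a reader should expect the original proof to follow.
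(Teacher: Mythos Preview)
The paper does not prove this proposition at all: it is stated with the citation \cite[Corollary 6.3]{BuiKru} and used as a black box in the proof of Lemma~\ref{C3.2}. You correctly recognize this in your final sentence, and your Ekeland-plus-fuzzy-sum-rule sketch is indeed the standard route one expects the original proof in \cite{BuiKru} to take; there is nothing to compare against in the present paper.
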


Condition \eqref{T3.1.3} plays an important role in our analysis. It relates the dual space elements $x_1^*,x_2^*$ to the primal space ones $\tilde x-\tilde a+u$, $\tilde x-\tilde b+v$.

The next lemma slightly modifies Proposition \ref{C3} in such a way that the imposed condition about the common point of the sets can be relaxed.

\begin{lemma}\label{C3.2}
Let $\{A,B\}$ be a pair of closed sets in $X$, $x\in X$, $a\in A$, $b\in B$ and numbers $\rho,\eps >0$.
Suppose that
\begin{equation}\label{l1 assume}
\eps < \max\{\norm{x-a},\norm{x-b}\} < \dist\paren{A\times B,\parenn{\B_\rho(x)}_2} + \eps.
\end{equation}
Then, for any numbers $\la\ge \eps+\rho$ and $\tau \in \left]0,\frac{\la-\eps}{\la+\eps}\right[$ there exist points $\hat a\in A\cap \B_\la(a)$, $\hat b\in B\cap \B_\la(b)$, $\hat x\in \B_\rho(x)$ and vectors $x^*_1,x^*_2\in X^*$ such that condition \eqref{T3.1.2} is satisfied and
\begin{gather}
\label{C3.1.2.1}
x_1^*\in N_A(\hat a),\quad x_2^*\in N_B(\hat b),\\
\label{C3.1.3}
\langle x_1^*,\hat x -\hat a\rangle +\langle x_2^*,\hat x-\hat b\rangle>\tau \max\left\{\norm{\hat x-\hat a},\|\hat x-\hat b\|\right\}.
\end{gather}
\end{lemma}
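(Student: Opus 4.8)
### Proof plan for Lemma \ref{C3.2}

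The plan is to reduce Lemma \ref{C3.2} to Proposition \ref{C3} by a suitable translation trick. The issue is that Proposition \ref{C3} requires a common point $\bx \in \widetilde A \cap \widetilde B$ together with the separation condition \eqref{T3.1.1}, whereas here we are only given arbitrary points $a \in A$, $b \in B$, $x \in X$ with the inequality \eqref{l1 assume}, and no common point is assumed. First I would set $\widetilde A := A$, $\widetilde B := B$, and choose the reference point to be $x$ itself, so that the role of ``$\bx$'' in Proposition \ref{C3} is played by $x$. Then I would pick the shift vectors $u := x - \hat a_0$-type corrections; more precisely, the natural choice is to take $u$ and $v$ so that the translated sets $A - u$ and $B - v$ are pulled apart from $\B_\rho(x)$ in a controlled way. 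Concretely, set $u := x - a$ and $v := x - b$ — wait, that would not be right either; the cleanest choice is $u := -(x-a)\cdot 0$… Let me instead take $u := 0$ is too weak. The correct device: replace $A$ by $A - u$ where $u$ is a small vector chosen so that $(A-u)\cap(B-v)\cap\B_\rho(x) = \emptyset$; since we do not have a common point hypothesis we cannot guarantee $x \in (A-u)\cap(B-v)$, so instead I would apply Proposition \ref{C3} with its reference point taken to be some point of $(A-u)\cap(B-v)$ — but that set is empty. The resolution actually used in such arguments: apply Proposition \ref{C3} not with a point in the intersection but observe that \eqref{T3.1.1} already says the intersection with $\B_\rho(\bx)$ is empty, and the statement of Proposition \ref{C3} as quoted does require $\bx \in \widetilde A \cap \widetilde B$; so the trick must be to enlarge the sets.

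So here is the approach I would actually carry out. Define the \emph{augmented} sets $\widetilde A := A \cup \{x\}$ is wrong (changes normal cones). Better: keep $\widetilde A := A$, $\widetilde B := B$, let the reference point be any point of $A \cap B$ near $x$ — but none need exist. Given this, the honest plan is: apply Proposition \ref{C3} \emph{with $\bx$ replaced by $x$ and with $u, v$ chosen to be $u = v = 0$ is impossible since $x$ need not lie in $A\cap B$}. Therefore the real mechanism is a \emph{direct repetition} of the Ekeland-variational-principle proof of Proposition \ref{C3} rather than a black-box application. I would set up the function
\[
f(x', a', b') := \max\{\|x' - a'\|, \|x' - b'\|\}
\]
on the closed set $X \times A \times B$ with $\B_\rho(x)$ constraining $x'$, note by \eqref{l1 assume} that $f(x,a,b) < \inf f + \eps$ over $\B_\rho(x)\times A \times B$, apply Ekeland's variational principle to obtain a nearby minimizer $(\hat x, \hat a, \hat b)$ with the perturbed function, and then read off the first-order optimality conditions — the sum rule for Fréchet subdifferentials in Hilbert space (or the fuzzy sum rule / Asplund-space calculus) — to produce $x_1^* \in N_A(\hat a)$ and $x_2^* \in N_B(\hat b)$ satisfying \eqref{T3.1.2} and the scalar-product inequality \eqref{C3.1.3}. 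The parameters $\la \ge \eps + \rho$ and $\tau \in \,]0, \frac{\la-\eps}{\la+\eps}[$ govern the Ekeland radius and the accuracy of the variational inequality, exactly as in the cited Corollary 6.3 of \cite{BuiKru}.

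Alternatively, and more in the spirit of ``slightly modifies Proposition \ref{C3}'', I would actually invoke Proposition \ref{C3} with the shifts $u := x - a$ wrong; the shifts should be chosen as $u$ any vector with $\|u\| < \dist(\cdots) + \eps - (\text{something})$… The point of the shifts in Proposition \ref{C3} is precisely to make the intersection empty; here it is already ``morally'' empty near $x$ because $\max\{\|x-a\|,\|x-b\|\} > \eps$ forces $x \notin A \cap B$ in a quantitative sense, but to genuinely apply Proposition \ref{C3} I need a \emph{common} point. So I would introduce the auxiliary pair $\widetilde A := A - (x - a)$, $\widetilde B := B - (x-b)$; then $a - (x-a) = 2a - x$… no. The clean choice making $x$ common is $\widetilde A := A - (a - x)$ so that $x = a - (a-x) \in \widetilde A$, and likewise $\widetilde B := B - (b-x)$ so $x \in \widetilde B$; hence $x \in \widetilde A \cap \widetilde B$. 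Now put $u := a - x$, $v := b - x$, so $\widetilde A - u = A - 2(a-x)$ is not $A$; that does not recover the original sets. The escape: apply Proposition \ref{C3} to $\widetilde A, \widetilde B$ with shifts $u' := -(a-x)$, $v' := -(b-x)$, so that $\widetilde A - u' = A$ and $\widetilde B - v' = B$ exactly. Then \eqref{T3.1.1} becomes $A \cap B \cap \B_\rho(x) = \emptyset$, which need NOT hold. Hence even this fails when $A\cap B \cap \B_\rho(x) \ne \emptyset$ — but in that case \eqref{l1 assume} reading $\max\{\|x-a\|,\|x-b\|\} < \dist(A\times B, [\B_\rho(x)]_2) + \eps$ would force $\dist(A\times B,[\B_\rho(x)]_2) = 0$, so $\max\{\|x-a\|,\|x-b\|\} < \eps$, contradicting the left inequality in \eqref{l1 assume}. \textbf{So the left inequality $\eps < \max\{\|x-a\|,\|x-b\|\}$ is exactly what rules out $A \cap B \cap \B_\rho(x) \ne \emptyset$}, and the translation argument goes through.

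Thus the concrete steps are: (1) observe from \eqref{l1 assume} that $A \cap B \cap \B_\rho(x) = \emptyset$ (else $\dist(A\times B,[\B_\rho(x)]_2)=0$ and the two inequalities of \eqref{l1 assume} clash); (2) set $\widetilde A := A - (a - x)$, $\widetilde B := B - (b - x)$, noting $x \in \widetilde A \cap \widetilde B$, and $u := -(a-x)$, $v := -(b-x)$, so that $\widetilde A - u = A$, $\widetilde B - v = B$; (3) check the two hypotheses of Proposition \ref{C3}: \eqref{T3.1.1} is just $A \cap B \cap \B_\rho(x) = \emptyset$ from step (1), and \eqref{T3.1.1_Thao}, namely $\max\{\|u\|,\|v\|\} = \max\{\|x-a\|,\|x-b\|\} < \dist(A\times B, [\B_\rho(x)]_2) + \eps$, is precisely the right inequality of \eqref{l1 assume}; (4) apply Proposition \ref{C3} to get $\tilde a \in \widetilde A \cap \B_\la(x)$, $\tilde b \in \widetilde B \cap \B_\la(x)$, $\tilde x \in \B_\rho(x)$ and $x_1^*, x_2^*$ satisfying \eqref{T3.1.2}, \eqref{T3.1.2.1}, \eqref{T3.1.3}; (5) translate back: put $\hat a := \tilde a + (a - x) \in A$, $\hat b := \tilde b + (b-x) \in B$, $\hat x := \tilde x$; then $N_{\widetilde A}(\tilde a) = N_A(\hat a)$ and $N_{\widetilde B}(\tilde b) = N_B(\hat b)$ because normal cones are translation invariant, giving \eqref{C3.1.2.1}; moreover $\tilde x - \tilde a + u = \hat x - \hat a$ and $\tilde x - \tilde b + v = \hat x - \hat b$, so \eqref{T3.1.3} turns into exactly \eqref{C3.1.3}; and \eqref{T3.1.2} is unchanged. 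Finally verify the localization: $\|\hat a - a\| = \|\tilde a - x\| \le \la$, so $\hat a \in A \cap \B_\la(a)$, similarly $\hat b \in B \cap \B_\la(b)$, and $\hat x = \tilde x \in \B_\rho(x)$, completing the proof.

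The only genuinely delicate point — and the ``main obstacle'' — is step (1): showing that the two inequalities in \eqref{l1 assume} cannot coexist with $A \cap B \cap \B_\rho(x) \ne \emptyset$. If $z \in A \cap B \cap \B_\rho(x)$ then taking $x' = z$, $a' = b' = z$ in the definition of $\dist(A\times B, [\B_\rho(x)]_2)$ gives $\dist(A\times B, [\B_\rho(x)]_2) \le \max\{\|z - z\|,\|z-z\|\} = 0$, hence the right inequality of \eqref{l1 assume} forces $\max\{\|x-a\|,\|x-b\|\} < \eps$, contradicting the left inequality. Everything else is bookkeeping with translations, which is why the lemma is described as only ``slightly'' modifying Proposition \ref{C3}.
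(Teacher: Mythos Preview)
Despite the meandering exposition, your final concrete steps (1)--(5) are correct and essentially identical to the paper's proof: the paper translates via $\widetilde A := A-a$, $\widetilde B := B-b$ with base point $\bar x := 0$ and the same shifts $u=x-a$, $v=x-b$, whereas you translate by $a-x$, $b-x$ with base point $x$; the two choices differ only by a global shift, and the key observation---that the left inequality in \eqref{l1 assume} forces $A\cap B\cap \B_\rho(x)=\emptyset$ (equivalently $\dist(A\times B,[\B_\rho(x)]_2)>0$), after which Proposition~\ref{C3} applies and the back-translation yields $\hat a,\hat b,\hat x$ with \eqref{C3.1.2.1}--\eqref{C3.1.3}---is the same in both arguments.
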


\begin{proof}
We are going to apply Proposition \ref{C3} for
\begin{gather}\label{til AB}
\widetilde A := A-a,\quad \widetilde B := B-b,\quad \bx := 0 \in \widetilde A\cap \widetilde B,\quad u:=x-a,\quad v:=x-b
\end{gather} 
 and the same technical numbers $\rho,\eps,\la,\tau$ by verifying conditions \eqref{T3.1.1} and \eqref{T3.1.1_Thao}.

Indeed, assumption \eqref{l1 assume} implies that $\dist\paren{A\times B,\parenn{\B_\rho(x)}_2}>0$ which
particularly implies that
$$
A\cap B\cap \B_\rho(x)=\emptyset,\; \text{equivalently,}\; (A-x)\cap (B-x)\cap (\rho\B)=\emptyset.
$$
This together with 
\[
(\widetilde A-u) \cap (\widetilde B-v) \cap (\B_\rho(\bx)) =(A-x)\cap (B-x)\cap (\rho\B)
\]
clearly yields \eqref{T3.1.1}.
Note also that
\begin{align*}
\dist\paren{(\widetilde A-u)\times (\widetilde B-v),\parenn{\B_\rho(\bx)}_2}
= \dist\paren{(A-x)\times (B-x),\parenn{\rho\B}_2}
= \dist\paren{A\times B,\parenn{\B_\rho(x)}_2},
\end{align*}
which means that condition \eqref{T3.1.1_Thao} is exactly the assumption \eqref{l1 assume}.

Then Proposition~\ref{C3} yields that there are points $\tilde a\in \widetilde A\cap (\la\B)$, $\tilde b\in \widetilde B\cap (\la\B)$, $\tilde x\in \rho\B$ and vectors $x^*_1,x^*_2\in X^*$ satisfying conditions \eqref{T3.1.2}, \eqref{T3.1.2.1} and \eqref{T3.1.3}.

Define $\hat a := \tilde{a} + a$, $\hat{b} :=\tilde{b}+b$ and $\hat{x} :=\tilde{x}+x$.
Then by the construction of $\widetilde A$, $\widetilde B$ at \eqref{til AB}, we have that $\hat a\in A\cap \B_\la(a)$, $\hat b\in B\cap \B_\la(b)$, $\hat x\in \B_\rho(x)$ and also
\begin{gather}\label{g1}
N_A(\hat a) = N_{\widetilde A}(\tilde{a}),\quad N_B(\hat b)=N_{\widetilde B}(\tilde{b}),
\\\label{g2}
\hat{x} - \hat{a} = \tilde{x}-\tilde{a}+x-a = \tilde{x}-\tilde{a}+u,\quad \hat{x} - \hat{b} = \tilde{x}-\tilde{b}+x-b = \tilde{x}-\tilde{b}+v.
\end{gather}
The combination of \eqref{T3.1.2.1} with \eqref{g1} yields \eqref{C3.1.2.1} and the combination of \eqref{T3.1.3} with \eqref{g2} yields \eqref{C3.1.3}.

That is, the points $\hat{a}$, $\hat{b}$, $\hat{x}$ and vectors $x_1^*,x_2^*$ satisfy all the required conditions of the lemma and hence the proof is complete.
\qed
\end{proof}

\begin{lemma}\label{P3.2}\footnote{The result is valid in normed linear spaces.}
	Let $\{A,B\}$ be a pair of closed sets in $X$, $a\in A$, $b\in B$, $x\in X$ with $\norm{x-a}=\norm{x-b}>0$, $\al>0$, $\eps\ge 0$ and vectors $x_1^*,x_2^*\in{X}^*$ with $\norm{x_1^*}+\norm{x_2^*}=1$.
	Suppose that the following conditions are satitisfied:
	\begin{gather}\label{Thao:sum=1}
{2\eps}({\al+1/\sqrt{\eps}})<\al,
\\\label{P3.2.1.1}
	{\dist(x_1^*,N_{A}(a)) <\eps},\quad \dist(x_2^*,N_{B}(b))<\eps,\quad\norm{x_1^*+x_2^*}+{2\eps}({\al+1/\sqrt{\eps}})<\al,\\
	\label{P3.2.}
	\langle x_1^*,x-a\rangle=\norm{x_1^*}\norm{x-a},\quad \langle x_2^*,x-b\rangle=\norm{x_2^*}\norm{x-b}.
	\end{gather}
Then there exists a number $\de>0$ such that for any $\rho\in\,]0,\de\,[$, we have that
	\begin{equation}\label{P3.2.3}
	\dist\paren{\paren{A\cap \overline\B_\la(a)} \times \paren{B\cap \overline\B_\la(b)},\parenn{\B_\rho(x)}_2} + \al\rho > \norm{x-a},
	\end{equation}
where $\la:=(\al+1/\sqrt{\eps})\rho$.
\end{lemma}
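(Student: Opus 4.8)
The plan is to establish \eqref{P3.2.3} by bounding from below the number $\max\{\norm{x'-a'},\norm{x'-b'}\}$ for \emph{every} admissible triple $x'\in\B_\rho(x)$, $a'\in A\cap\overline{\B}_\la(a)$, $b'\in B\cap\overline{\B}_\la(b)$ and then passing to the infimum; note that by \eqref{dist in X^2} and \eqref{[]_2} the left-hand side of \eqref{P3.2.3} is exactly $\al\rho$ plus the infimum of such maxima, and the admissible set is nonempty since it contains $(x,a,b)$. Also, the first inequality in \eqref{P3.2.1.1} forces $\eps>0$, so $1/\sqrt{\eps}$ is well defined. The starting point is purely algebraic: since $\norm{x_1^*}+\norm{x_2^*}=1$, the Cauchy--Schwarz inequality gives
\[
\max\{\norm{x'-a'},\norm{x'-b'}\}\ \ge\ \norm{x_1^*}\norm{x'-a'}+\norm{x_2^*}\norm{x'-b'}\ \ge\ \ip{x_1^*}{x'-a'}+\ip{x_2^*}{x'-b'}.
\]
Writing $x'-a'=(x-a)+(x'-x)-(a'-a)$ and $x'-b'=(x-b)+(x'-x)-(b'-b)$, and invoking \eqref{P3.2.} together with $\norm{x-a}=\norm{x-b}$, the leading term collapses to $\ip{x_1^*}{x-a}+\ip{x_2^*}{x-b}=(\norm{x_1^*}+\norm{x_2^*})\norm{x-a}=\norm{x-a}$, whence
\[
\ip{x_1^*}{x'-a'}+\ip{x_2^*}{x'-b'}\ =\ \norm{x-a}+\ip{x_1^*+x_2^*}{x'-x}-\ip{x_1^*}{a'-a}-\ip{x_2^*}{b'-b}.
\]

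Next I would control the three residual inner products. The first is at least $-\norm{x_1^*+x_2^*}\,\rho$ because $\norm{x'-x}<\rho$. For the other two, fix $\bar{x}_1^*\in N_A(a)$ and $\bar{x}_2^*\in N_B(b)$ with $\norm{x_1^*-\bar{x}_1^*}<\eps$ and $\norm{x_2^*-\bar{x}_2^*}<\eps$, which exist by \eqref{P3.2.1.1}. For any prescribed $\eta>0$, the definition \eqref{NC} of the Fr\'echet normal cone furnishes radii $\de_1,\de_2>0$ such that $\ip{\bar{x}_1^*}{a'-a}\le\eta\norm{a'-a}$ for all $a'\in A$ with $\norm{a'-a}<\de_1$ and $\ip{\bar{x}_2^*}{b'-b}\le\eta\norm{b'-b}$ for all $b'\in B$ with $\norm{b'-b}<\de_2$; consequently $\ip{x_1^*}{a'-a}\le(\eta+\eps)\norm{a'-a}\le(\eta+\eps)\la$ once $\la<\de_1$, and symmetrically for $b$. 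Combining all of this with $\la=(\al+1/\sqrt{\eps})\rho$ yields
\[
\max\{\norm{x'-a'},\norm{x'-b'}\}\ \ge\ \norm{x-a}-\Big(\norm{x_1^*+x_2^*}+2(\eta+\eps)(\al+1/\sqrt{\eps})\Big)\rho .
\]

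The proof is then completed by bookkeeping on the constants. The last inequality in \eqref{P3.2.1.1} supplies a strictly positive slack $\sigma:=\al-\norm{x_1^*+x_2^*}-2\eps(\al+1/\sqrt{\eps})$, so I fix $\eta>0$ small enough that $2\eta(\al+1/\sqrt{\eps})<\sigma$; then the bracketed coefficient above equals $\al-\sigma+2\eta(\al+1/\sqrt{\eps})<\al$. With $\eta$ fixed (hence also $\de_1,\de_2$), set $\de:=\min\{\de_1,\de_2\}/(\al+1/\sqrt{\eps})$; for every $\rho\in\,]0,\de[$ this gives $\la<\min\{\de_1,\de_2\}$, so the displayed lower bound holds for each admissible triple. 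Passing to the infimum and adding $\al\rho$ gives
\[
\dist\Big((A\cap\overline{\B}_\la(a))\times(B\cap\overline{\B}_\la(b)),\parenn{\B_\rho(x)}_2\Big)+\al\rho\ \ge\ \norm{x-a}+\big(\sigma-2\eta(\al+1/\sqrt{\eps})\big)\rho\ >\ \norm{x-a},
\]
which is \eqref{P3.2.3}. The one point that needs care is the uniformity of the Fr\'echet-normal estimate over the whole ball $\overline{\B}_\la(a)$ rather than just asymptotically at $a$: this is legitimate precisely because $\la=(\al+1/\sqrt{\eps})\rho$ tends to $0$ with $\rho$, which is exactly why $\de$ must be taken proportional to $\min\{\de_1,\de_2\}$ divided by $\al+1/\sqrt{\eps}$.
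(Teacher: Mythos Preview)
Your proof is correct and follows essentially the same route as the paper's: both arguments pick Fr\'echet normals $\bar x_1^*\in N_A(a)$, $\bar x_2^*\in N_B(b)$ within $\eps$ of $x_1^*,x_2^*$, use the definition of the Fr\'echet normal cone to obtain radii on which $\ip{\bar x_i^*}{\cdot}$ is controlled by a small multiple of the increment, set $\de$ so that $\la=(\al+1/\sqrt{\eps})\rho$ falls inside those radii, and then expand $\ip{x_1^*}{x'-a'}+\ip{x_2^*}{x'-b'}$ via the decomposition $(x-a)+(x'-x)-(a'-a)$ to recover $\norm{x-a}$ minus an error of size strictly less than $\al\rho$. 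The paper wraps this computation in a contradiction (assuming \eqref{P3.2.3} fails and deriving $\norm{x-a}<\norm{x-a}$), whereas you bound every admissible triple directly and then pass to the infimum; your slack parameter $\eta$ plays the role of the paper's $\be/2$, and your $\sigma$ encodes the same gap the paper splits as $\al_1-\al_2$.
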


\begin{proof}
By \eqref{P3.2.1.1}, there exist vectors $a^*\in N_{A}(a)$, $b^*\in N_B(b)$ such that
	\begin{equation}\label{P5.13P1}
	\norm{x_1^*-a^*}\le\eps,\quad \norm{x_2^*-b^*} \le \eps.
	\end{equation}
Since $\al-\norm{x_1^*+x_2^*}>{2\eps}(\al+1/\sqrt{\eps})$, there are positive numbers $\al_1,\al_2$ such that 
\begin{equation}\label{P5.13P1.1}
	\al>\al_1>\al_2>\norm{x_1^*+x_2^*}, \quad 	\al_1-\al_2> {2\eps}({\al+1/\sqrt{\eps}}).
\end{equation} 
Choose a number $\be>0$ such that
\begin{equation}\label{al-al3}
\be<\frac{\al_1-\al_2}{\al+1/\sqrt{\eps}}-2\eps,\; \text{equivalently},\;
\al_1-\al_2-({\be+2\eps})({\al+1/\sqrt{\eps}})>0.
\end{equation}
	By the definition \eqref{NC} of the Fr\'echet normal cone, there is a number $\de_1>0$ such that
	\begin{equation}\label{P5.13P2}
	\langle a^*,a'-a\rangle \le\frac{\be}{2}\norm{a'-a},\; \langle b^*,b'-b\rangle \le\frac{\be}{2}\norm{b'-b},\quad
	\forall	a'\in A\cap \B_{\de_1}(a), b'\in B\cap \B_{\de_1}(b).
	\end{equation}
Let us define $\de:=\frac{\de_1}{\al+1/\sqrt{\eps}}>0$ and show that $\de$ fulfils the requirement of Lemma \ref{P3.2}.
	
	Indeed, let us suppose to the contrary that condition \eqref{P3.2.3} is not satisfied, i.e., there is a $\rho \in ]0,\delta[$ such that
\begin{equation*}
	\dist\paren{\paren{A\cap \overline\B_\la(a)} \times \paren{B\cap \overline\B_\la(b)},\parenn{\B_\rho(x)}_2}+\rho\al \le \norm{x-a},
\end{equation*}
where $\la:=(\al+1/\sqrt{\eps})\rho$.
Since $\al_1<\al$, the previous inequality ensures the existence of $x'\in \B_\rho(x)$, $a'\in A\cap \overline \B_\la(a)$ and $b'\in B\cap \overline\B_\la(b)$ such that
	\begin{equation}\label{P5.3.2}
		\max\{\norm{x'-a'},\norm{x'-b'}\}< \norm{x-a}-\rho\al_1.
	\end{equation}

We now make several observations as below.
First, by \eqref{P3.2.}, $\|x_1^*\|+\|x_2^*\|=1$ and $\|x-a\|=\|x-b\|$, it holds that
	\begin{equation}\label{l2:Thao_est3}
	\langle x_1^*,x-a\rangle+\langle x_2^*,x-b\rangle = \|x_1^*\|\|x-a\| + \|x_2^*\|\|x-b\| = \norm{x-a}.
\end{equation}

Second, by \eqref{P5.3.2} and $\|x_1^*\|+\|x_2^*\|=1$, it holds that
	\begin{equation}\label{l2:Thao_est4}
	\langle x_1^*,x'-a'\rangle+\langle x_2^*,x'-b'\rangle \le \max\{\norm{x'-a'},\norm{x'-b'}\} < \norm{x-a}-\rho\al_1.
\end{equation}

Third, by the Cauchy-Schwarz inequality and the first inequality of \eqref{P5.13P1}, it holds that
\begin{equation}\label{l2:Thao_est2}
	\langle x_1^*+x_2^*,x-x'\rangle \le \rho\al_2.
\end{equation}

Fourth, by the Cauchy-Schwarz inequality and \eqref{P5.13P1}, it holds that
	\begin{equation}\label{l2:Thao_est1_1}
	\langle x_1^*-a^*,a'-a\rangle+\langle x_2^*-b^*,b'-b\rangle \le \eps\norm{a'-a} + \eps\norm{b'-b} \le 2\la\eps.
\end{equation} 
Thanks to \eqref{P5.13P2}, it also holds that
	\begin{equation}\label{l2:Thao_est1_2}
	\ip{a^*}{a'-a} + \ip{b^*}{b'-b} \le \frac{\be}{2}\norm{a'-a} + \frac{\be}{2}\norm{b'-b} \le \frac{\be}{2}\la + \frac{\be}{2}\la
	= \la\be.
\end{equation}
Adding \eqref{l2:Thao_est1_1} and \eqref{l2:Thao_est1_2} yields that
	\begin{equation}\label{l2:Thao_est1}
	\langle x_1^*,a'-a\rangle+\langle x_2^*,b'-b\rangle \le 2\la\eps + \la\be.
\end{equation}
 
Making use of \eqref{l2:Thao_est3}, \eqref{l2:Thao_est4}, \eqref{l2:Thao_est2}, \eqref{l2:Thao_est1}, $\la=(\al+1/\sqrt{\eps})\rho$ and \eqref{al-al3} successively, we come up with
	\begin{eqnarray*}
	\norm{x-a}&=&\langle x_1^*,x-a\rangle+\langle x_2^*,x-b\rangle\\
	&=&\langle x_1^*,x'-a'\rangle+\langle x_2^*,x'-b'\rangle+\langle x_1^*+x_2^*,x-x'\rangle+\langle x_1^*,a'-a\rangle+\langle x_2^*,b'-b\rangle\\
	&<& \norm{x-a}-\rho\al_1+\rho\al_2+2\la\eps + \la\be\\
	&=& \norm{x-a} - \rho \left(\al_1-\al_2-({2\eps+\be})({\al+1/\sqrt{\eps}})\right)<\norm{x-a},
	\end{eqnarray*}
which is a contradiction and hence the proof is complete.
	\qed
\end{proof}
\bigskip

We now state and prove the key estimates for our analysis.

\begin{theorem}[key estimates]\label{t:primal result}
	Let $\{A,B\}$ be a pair of closed subsets of $X$ and $\bx\in A\cap B$ and consider the following statements.
	\begin{enumerate}
		\item\label{t4_i} 
		$\{A,B\}$ is {\it intrinsically transversal at} $\bx$, that is, $\itr >0$.

		\item\label{primal itr} There are numbers $\al\in ]0,1[$ and $\eps>0$ satisfying that for every $a\in (A\setminus B)\cap \B_\eps(\bx)$, 	$b\in (B\setminus A)\cap \B_\eps(\bx)$ and $x\in \B_\eps(\bx)$ with $\norm{x-a}=\norm{x-b}$ and $\delta >0$, there is $\rho\in ]0,\delta[$ such that
	\begin{equation}\label{D6.2}
		\dist\paren{\paren{A\cap \overline\B_\la(a)} \times \paren{B\cap \overline\B_\la(b)},\parenn{\B_\rho(x)}_2} + \al\rho \le \norm{x-a},
		\end{equation}
		where $\la:=(\al+1/\sqrt{\eps})\rho$.
		\item\label{t4_iii} $\itrc >0$.
	\end{enumerate}
Then, it holds that \eqref{t4_i} $\Rightarrow$ \eqref{primal itr} $\Rightarrow$ \eqref{t4_iii}.
\bigskip

Furthermore, let $\itrpp$ denote the exact upper bound of all $\al>0$ satisfying condition \eqref{primal itr} for some $\eps>0$, then it holds that
	\begin{equation}\label{T4.1}
\itr \le \itrpp \le \itrc.
	\end{equation}
\end{theorem}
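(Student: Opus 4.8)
The plan is to establish the two implications \eqref{t4_i} $\Rightarrow$ \eqref{primal itr} $\Rightarrow$ \eqref{t4_iii} and then read off the quantitative chain \eqref{T4.1} essentially for free from the same arguments. For the implication \eqref{t4_i} $\Rightarrow$ \eqref{primal itr}, I would argue by contraposition: suppose condition \eqref{primal itr} fails for the given $\al\in]0,1[$ and all $\eps>0$. Then for every $\eps>0$ there are points $a\in(A\setminus B)\cap\B_\eps(\bx)$, $b\in(B\setminus A)\cap\B_\eps(\bx)$, $x\in\B_\eps(\bx)$ with $\norm{x-a}=\norm{x-b}$, and a $\de>0$, such that \eqref{D6.2} is violated for every $\rho\in]0,\de[$; i.e. the reverse strict inequality
\[
\dist\paren{\paren{A\cap\overline\B_\la(a)}\times\paren{B\cap\overline\B_\la(b)},\parenn{\B_\rho(x)}_2}+\al\rho>\norm{x-a}
\]
holds for all small $\rho$, with $\la=(\al+1/\sqrt\eps)\rho$. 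Fixing such $\rho$ and applying Lemma \ref{C3.2} to the truncated sets $A\cap\overline\B_\la(a)$, $B\cap\overline\B_\la(b)$ (after checking the hypothesis \eqref{l1 assume} from the reverse inequality above, choosing the auxiliary $\eps$ in the lemma appropriately and, say, $\la_{\text{lem}}=\eps_{\text{lem}}+\rho$, $\tau$ close to $1$), we obtain points $\hat a\in A$, $\hat b\in B$, $\hat x$ near $x$, and Fr\'echet normals $x_1^*\in N_A(\hat a)$, $x_2^*\in N_B(\hat b)$ with $\norm{x_1^*}+\norm{x_2^*}=1$, $\norm{x_1^*+x_2^*}$ small, and \eqref{C3.1.3} giving an almost-alignment of $x_i^*$ with $\hat x-\hat a$ and $\hat x-\hat b$ (together with near-equality of $\norm{\hat x-\hat a}$ and $\norm{\hat x-\hat b}$, which follows from the max in \eqref{C3.1.3} being small relative to each term). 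Letting $\eps\downarrow0$ we produce sequences exhibiting $\itr=0$, contradicting \eqref{t4_i}. The quantitative bound $\itr\le\itrpp$ comes out of the same computation by tracking how small $\norm{x_1^*+x_2^*}$ can be forced.

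For the implication \eqref{primal itr} $\Rightarrow$ \eqref{t4_iii}, I would argue directly using Lemma \ref{P3.2}, which is the exact converse of the geometry in \eqref{primal itr}. Suppose \eqref{t4_iii} fails, i.e. $\itrc$ is small; by the definition \eqref{itrc constant} there exist, for arbitrarily small $\eps>0$, points $a\in A\setminus B$, $b\in B\setminus A$, $x$ all near $\bx$ with $\norm{x-a}=\norm{x-b}$ and vectors $x_1^*,x_2^*$ with $\norm{x_1^*}+\norm{x_2^*}=1$, $\dist(x_i^*,N_A(a))$, $\dist(x_i^*,N_B(b))$ small, the alignment equalities \eqref{P3.2.}, and $\norm{x_1^*+x_2^*}$ small — small enough that all the smallness hypotheses \eqref{Thao:sum=1}--\eqref{P3.2.1.1} of Lemma \ref{P3.2} hold with the $\al$ of \eqref{primal itr}. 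Lemma \ref{P3.2} then yields a $\de>0$ such that for every $\rho\in]0,\de[$,
\[
\dist\paren{\paren{A\cap\overline\B_\la(a)}\times\paren{B\cap\overline\B_\la(b)},\parenn{\B_\rho(x)}_2}+\al\rho>\norm{x-a},
\]
which directly contradicts \eqref{D6.2}, hence contradicts \eqref{primal itr}. Combined with $\norm{x-a}=\norm{x-b}$, this is exactly the negation of the conclusion in \eqref{primal itr}. Tracking constants, this shows $\itrpp\le\itrc$, completing \eqref{T4.1}.

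The main obstacle I anticipate is the bookkeeping in the first implication: matching the several independent small parameters (the $\eps$ quantifying nearness to $\bx$, the auxiliary $\eps_{\text{lem}}$ and $\tau$ entering Lemma \ref{C3.2}, the radius $\la=(\al+1/\sqrt\eps)\rho$, and the target $\al$) so that (i) the hypothesis \eqref{l1 assume} of Lemma \ref{C3.2} is genuinely satisfied — this requires turning the assumed failure of \eqref{D6.2} into the two-sided estimate on $\max\{\norm{x-a},\norm{x-b}\}$ — and (ii) the output of the lemma, in which $\hat a\in\B_{\la_{\text{lem}}}(a)$, $\hat b\in\B_{\la_{\text{lem}}}(b)$, still lands inside the truncation balls $\overline\B_\la(a)$, $\overline\B_\la(b)$ and close enough to $\bx$ that the normals $N_{A\cap\overline\B_\la(a)}(\hat a)$ coincide with $N_A(\hat a)$ (which holds when $\hat a$ is in the interior of the truncation ball). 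Once the parameter hierarchy is pinned down, the alignment and near-equality conditions needed for the definition \eqref{itr constant} of $\itr$ fall out of \eqref{T3.1.2} and \eqref{C3.1.3} by the elementary identity $\norm{u/\norm u-v/\norm v}^2=2-2\ang{u,v}/(\norm u\norm v)$, as in the proof of Theorem \ref{t:question 3}. The second implication is comparatively routine since Lemma \ref{P3.2} is tailored to it; the only care needed there is verifying that a sufficiently small value of $\itrc$ indeed makes all of \eqref{Thao:sum=1}--\eqref{P3.2.1.1} hold simultaneously for a fixed admissible $\al$.
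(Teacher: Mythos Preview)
Your proposal is correct and follows essentially the same route as the paper: contraposition via Lemma~\ref{C3.2} applied to the truncated sets $A\cap\overline\B_\la(a)$, $B\cap\overline\B_\la(b)$ for the first implication (with the paper taking $\eps_{\text{lem}}=\al\rho$ and $\la_{\text{lem}}=\la$, which already guarantees $\hat a$ lies in the \emph{open} ball $\B_\la(a)$ so that $N_{A\cap\overline\B_\la(a)}(\hat a)=N_A(\hat a)$), and Lemma~\ref{P3.2} for the second, with \eqref{T4.1} read off from the same estimates. One minor slip: with $\al$ fixed, Lemma~\ref{C3.2} only yields $\norm{x_1^*+x_2^*}<\eps_{\text{lem}}/\rho=\al$, so letting $\eps\downarrow 0$ gives $\itr\le\al$ rather than $\itr=0$ --- but $\itr\le\al$ for each $\al>\itrpp$ is exactly what is needed for \eqref{T4.1}, and the near-equality $\norm{\hat x-\hat a}/\norm{\hat x-\hat b}\to 1$ comes from the triangle inequality (using $\norm{\hat x-x}<\rho$, $\norm{\hat a-a}<\la$, $\norm{\hat b-b}<\la$ and choosing $\rho$ small relative to $\norm{x-a}$), not from \eqref{C3.1.3}.
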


\begin{proof}
By the definition of $\itrpp$, condition \eqref{primal itr} of Theorem \ref{t:primal result} is equivalent to $\itrpp>0$.
Hence, the statement that \eqref{t4_i} $\Rightarrow$ \eqref{primal itr} $\Rightarrow$ \eqref{t4_iii} is a direct consequence of \eqref{T4.1} which will be proved in the remainder of this proof.

	We prove the first inequality of \eqref{T4.1}: $\itr \le \itrpp$.
	Since the inequality becomes trivial when $\itrpp\ge 1$.
	We will prove the inequality when $\itrpp< 1$ by taking an arbitrary number $\al$ satisfying $\itrpp<\al \le 1$ and showing that $\itr\le \al$.
	By the definition of $\itrpp$, for any $\eps>0$, there are points $a\in (A\setminus B)\cap \B_\eps(\bx)$, $b\in (B\setminus A)\cap \B_\eps(\bx)$ and $x\in \B_\eps(\bx)$ with $\norm{x-a}=\norm{x-b}$ and $\delta >0$ such that the inequality
	\begin{equation}
	\label{D6.3}
	\dist\paren{\paren{A\cap \overline\B_\la(a)} \times \paren{B\cap \overline\B_\la(b)},\parenn{\B_\rho(x)}_2} + \al\rho > \norm{x-a},
	\end{equation}
	holds true for all $\rho\in ]0,\delta[$, where $\la:=(\al+1/\sqrt{\eps})\rho$.
	
	Since $A\cap B$ is a closed set and $a,b\notin A\cap B$, there is a number $\gamma>0$ such that
	\begin{equation}\label{non}
	\B_\gamma(a)\cap (A\cap B)=\emptyset,\quad \B_\gamma(b)\cap (A\cap B)=\emptyset.
	\end{equation}
	Let us consider $\eps\in \,]0,1[$ and choose a number $\rho>0$ and satisfying
\begin{equation}\label{t4:rho}
\rho< \min\left\{\delta,\; \frac{\gamma}{\al+1/\sqrt{\eps}},\; \eps,\; \frac{\eps\norm{x-a}}{\al+1+1/\sqrt{\eps}}\right\}.
\end{equation}
Define $\eps':= \al\rho>0$ and noting from \eqref{t4:rho} that
\[
\eps' = \al\rho < \paren{\al+1+1/\sqrt{\eps}}\rho < \eps\norm{x-a} < \norm{x-a} = \max\{\norm{x-a},\norm{x-b}\}.
\]

Thanks to \eqref{D6.3}, we can apply Lemma~\ref{C3.2} for the sets $A\cap \overline\B_\la(a)$, $B\cap\overline \B_\la(b)$, and points $a\in A\cap \overline\B_\la(a)$, $b\in B\cap\overline{\B}_\la(b)$, $x\in X$ and constants $\rho$, $\eps':=\al\rho$ (in place of $\eps$ in Lemma~\ref{C3.2}), $\la$, $\tau=\frac{\la - 2\al\rho}{\la +\al\rho}$ to find points $\hat a \in A\cap \B_{\la}(a)$, $\hat b\in B\cap \B_{\la}(b)$, $\hat x\in \B_\rho(x)$ and vectors $x_1^*,x_2^*\in X^*$ such that \eqref{C3.1.3} holds, and
	\begin{gather}\label{est_1}
		\norm{x_1^*}+\norm{x_2^*}=1,\quad\norm{x_1^*+x_2^*}<\frac{\eps'}{\rho}=\al,\\
		\label{est_2}
		x_1^*\in N_{A\cap \overline\B_\la(a)}(\hat a) = N_A(\hat a),\quad x_2^*\in N_{B\cap\overline \B_\la(b)}(\hat b)=N_B(\hat b).
	\end{gather}
	
	We next make several observations regarding $\hat{a}$, $\hat{b}$, $\hat{x}$, $x_1^*$ and $x_2^*$.
First,	since $\la<\gamma$ thanks to the choice of $\rho$ at \eqref{t4:rho}, we have $\hat a\in A\cap \B_\gamma(a)$ which together with \eqref{non} implies that $\hat a\notin B$.
	That is, $\hat a\in A\setminus B$ and similarly, $\hat b\in B\setminus A$. 
	
Second, by the triangle inequality and $\rho<\eps$ at \eqref{t4:rho}, it holds that
	\begin{equation*}
\begin{aligned}
\norm{\bar x -\hat x} &\le \norm{\bar x-x}+\norm{x-\hat x}< \eps+\rho\le 2\eps,
\\
\norm{\bar x -\hat a} &\le \norm{\bar x-a}+\norm{a-\hat a}< \eps+(1/\sqrt{\eps}+\al)\rho\le \eps(1+\al)+\sqrt{\eps},
\\
\norm{\bar x -\hat b} &\le \norm{\bar x-b}+\norm{b-\hat b}< \eps+(1/\sqrt{\eps}+\al)\rho\le \eps(1+\al)+\sqrt{\eps}.
\end{aligned}
	\end{equation*}
This implies that $\hat x \to \bar x$, $\hat a \to \bx$, $\hat b\to \bx$ as $\eps\downarrow 0$.

Third, due to \eqref{est_1} and $\al\le 1$, we have that $x_1^*\neq 0$ and $x_2^*\neq 0$ since if otherwise, we would have $\|x_1^*+x_2^*\|=1\ge \al$, a contradiction to \eqref{est_1}.

Fourth, by the triangle inequality and $\rho < \frac{\eps\norm{x-a}}{\al+1+1/\sqrt{\eps}}$ at \eqref{t4:rho}, it holds that
\[
\norm{\hat x -\hat a} \le \norm{x-a}+\norm{\hat x-x}+\norm{a-\hat a}\le \norm{x-a}+(1+\al+1/\sqrt{\eps})\rho \le (1+\eps)\norm{x-a}.
\]
By similar estimates, we also get that
\begin{gather*}
(1-\eps)\norm{x-a} \le 	\norm{\hat x -\hat a} \le (1+\eps)\norm{x-a},\\
(1-\eps)\norm{x-b} \le 	\|\hat x -\hat b\| \le (1+\eps)\norm{x-b}.
\end{gather*}
This together with noting that $\norm{x-a}=\|x-b\|$ implies that
	\begin{equation}\label{1}
		\frac{1-\eps}{1+\eps}\le \frac{\norm{\hat x- \hat a}}{\|\hat x-\hat b\|}\le \frac{1+\eps}{1-\eps},
	\end{equation}
which implies that $\frac{\norm{\hat x-\hat a}}{\|\hat x-\hat b\|}\to 1$ as $\eps\downarrow 0$.

Fifth,	by the Cauchy-Schwarz inequality, \eqref{C3.1.3} and the definition of $\tau$, we have that
	\begin{eqnarray*}
	1=\norm{x_1^*}+\norm{x_2^*}&\ge&\frac{\langle x_1^*,\hat x -\hat a \rangle}{\norm{\hat x-\hat a}}+\frac{\langle x_2^*,\hat x -\hat b \rangle}{\|\hat x-\hat b\|}\\
	&\ge& 
	\frac{\langle x_1^*,\hat x -\hat a \rangle}{\max
		\left\{\norm{\hat x -\hat a},\|\hat x-\hat b\|\right\}}+\frac{\langle x_2^*,\hat x -\hat b \rangle}{\max
		\left\{\norm{\hat x -\hat a},\|\hat x -\hat b\|\right\}}\\
	&> &\tau = \frac{1/\sqrt{\eps}-\al}{1/\sqrt{\eps}+\al},
	\end{eqnarray*}
	which tends to $1$ as $\eps \downarrow 0$.
	Thus,
$$
\frac{\langle x_1^*,\hat x -\hat a \rangle}{\norm{\hat x-\hat a}}+\frac{\langle x_2^*,\hat x -\hat b \rangle}{\norm{\hat x-\hat b}}\to 1\quad \text{as}\quad \eps \downarrow 0.
$$
Due to the Cauchy-Schwarz inequality and $\norm{x_1^*}+\norm{x_2^*}=1$, the above convergence happens if and only if
$$
\frac{\langle x_1^*,\hat x -\hat a \rangle}{\norm{x_1^*}\norm{\hat x-\hat a}}\to 1 \quad \text{as}\quad \frac{\langle x_2^*,\hat x -\hat b \rangle}{\norm{x_2^*}\norm{\hat x-\hat b}}\to 1 \quad \text{as}\quad \eps \downarrow 0.
$$

In view of \eqref{est_1}, \eqref{est_2} and the five observations above, by letting $\eps \downarrow 0$ and comparing the definition \eqref{itr constant} of $\itr$, we obtain that $\itr \le \al$ as claimed.
\bigskip

We now prove the second inequality of \eqref{T4.1}: $\itrpp \le \itrc$.
If $\itrc=1$, the inequality becomes trivial.
Otherwise,	let us take any number $\al$ satisfying $\itrc<\al\le 1$ and prove that $\itrpp<\al$.
Choose a number $\al_1$ satisfying $\itrc < \al_1 <\al$.
Then choose also a number $\eps>0$ such that 
\begin{equation}\label{choose eps}
{2\eps}({\al+1/\sqrt{\eps}}) < \al-\al_1.
\end{equation}
Such a number $\eps$ exists since ${2t}({\al+1/\sqrt{t}})\to 0$ as $t\downarrow 0$.

By the construction \eqref{itrc constant} of $\itrc$,  there exist $a\in (A\setminus B)\cap \B_\eps(\bx)$, $b\in (B\setminus A)\cap \B_\eps(\bx)$, $x\in \B_\eps(\bx)$ with $\norm{x-a}=\norm{x-b}$ and $x_1^*,x_2^*\in X^*$ such that 
		\begin{gather}
		\dist(x_1^*,N_A(a))<\eps,\quad \dist(x_2^*,N_B(b))<\eps,\quad \norm{x_1^*}+\norm{x_2^*}=1,\quad \norm{x_1^*+x_2^*}<\alpha_1,\\
		\langle x_1^*,x-a\rangle=\norm{x_1^*}\norm{x-a},\quad \langle x_2^*,x-b\rangle=\norm{x_2^*}\norm{x-b}.
		\end{gather}
Then by Lemma~\ref{P3.2} with noting that $\al_1+{2\eps}({\al+1/\sqrt{\eps}})<\al$ by \eqref{choose eps}, there exists a number $\delta>0$ such that for any $\rho\in \,]0,\delta[$, \eqref{D6.3} holds true.
Hence, by the definition of $\itrpp$, we obtain
\[
\itrpp\le \al_1+{2\eps}({\al+1/\sqrt{\eps}}) <\al
\]
 as claimed.
 The proof is complete.
	\qed
\end{proof}

\begin{remark}
	The first estimate $\itr \le \itrpp$ holds true in Asplund spaces, and the second one $\itrpp\le \itrc$ holds true in general normed linear spaces.
\end{remark}

The first estimate of \eqref{T4.1} shows that the primal space property \eqref{primal itr} of Theorem \ref{t:primal result} is a necessary condition for intrinsic transversality, while the second one shows that the property is a sufficient condition for the property characterized by $\itrc$.
A combination of Theorem \ref{t:primal result} and Theorem \ref{t:question 3} bridges the gap between the these properties in the Hilbert space setting.
As a result, we establishes the primal space characterization of intrinsic transversality for the first time.

\begin{corollary}[primal characterization of intrinsic transversality]\label{c1:primal result}
	Let $\{A,B\}$ be a pair of closed subsets of $X$ and $\bx\in A\cap B$.
		Then the following two statements hold true.
	\begin{enumerate}
		\item\label{cor1_case1} If $\itrc$ is strictly greater than $1/\sqrt{2}$, then so are the constants $\itr$, $\itrd{w}$ and $\itrpp$.
		\item\label{cor1_case2} If $\itrc\le 1/\sqrt{2}$, then it holds that
	\begin{equation}\label{c1:T4.1} 
	\itr = \itrd{w} = \itrpp = \itrc.
	\end{equation}
	\end{enumerate}
\end{corollary}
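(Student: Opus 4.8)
The plan is to read off both statements by chaining three inequalities that are already in hand: the ordering $\tr \le \itr \le \itrd{w} \le \itrc$ of Proposition~\ref{p:relation_cons}(i); the key estimate $\min\{\itrc,1/\sqrt{2}\} \le \itr$ of Theorem~\ref{t:question 3}; and the estimate $\itr \le \itrpp \le \itrc$ recorded in \eqref{T4.1} of Theorem~\ref{t:primal result}. Since all of the analytic content already sits inside those two theorems, the corollary is a short bookkeeping argument, and the dichotomy in its statement is nothing more than the two regimes $\itrc > 1/\sqrt{2}$ and $\itrc \le 1/\sqrt{2}$ in which the quantity $\min\{\itrc,1/\sqrt{2}\}$ takes different values.

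For item~\eqref{cor1_case1} I would assume $\itrc > 1/\sqrt{2}$, so that $\min\{\itrc,1/\sqrt{2}\} = 1/\sqrt{2}$ and Theorem~\ref{t:question 3} yields $\itr \ge 1/\sqrt{2}$. Inserting this into $\itr \le \itrd{w} \le \itrc$ gives $\itrd{w} \ge 1/\sqrt{2}$, and inserting it into $\itr \le \itrpp \le \itrc$ gives $\itrpp \ge 1/\sqrt{2}$; thus all three constants lie above $1/\sqrt{2}$, exactly as already observed in Remark~\ref{r:itrc<}.

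For item~\eqref{cor1_case2} I would assume $\itrc \le 1/\sqrt{2}$, so that now $\min\{\itrc,1/\sqrt{2}\} = \itrc$ and Theorem~\ref{t:question 3} reads $\itrc \le \itr$. Together with the chain $\itr \le \itrd{w} \le \itrc$ from Proposition~\ref{p:relation_cons}(i) this squeezes $\itr = \itrd{w} = \itrc$; and since \eqref{T4.1} gives $\itr \le \itrpp \le \itrc = \itr$, we also obtain $\itrpp = \itr$. That is precisely \eqref{c1:T4.1}.

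I do not anticipate any genuine obstacle here: the substantive work that makes this possible is Theorem~\ref{t:question 3} (the bound $\min\{\itrc,1/\sqrt{2}\}\le\itr$, whose proof is the technically demanding piece) together with the estimate \eqref{T4.1} of Theorem~\ref{t:primal result}. The only points requiring care when writing up the corollary are keeping track of the directions of the inequalities and noting that Theorem~\ref{t:question 3} only constrains $\itr$ below the threshold $1/\sqrt{2}$ --- which is exactly why the conclusion has to be split into the two cases above rather than being phrased as a single unconditional equality.
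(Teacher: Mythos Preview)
Your proposal is correct and follows essentially the same route as the paper: the paper's proof invokes Remark~\ref{r:itrc<} and Corollary~\ref{c:equivalence} together with the estimate~\eqref{T4.1}, and you have simply unpacked those references back to their constituents, namely Theorem~\ref{t:question 3} and Proposition~\ref{p:relation_cons}(i). The bookkeeping you describe is exactly what is needed.
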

\begin{proof}
The first statement \eqref{cor1_case1} easily follows from Remark \ref{r:itrc<} and the first estimate of \eqref{primal itr}, while the second one \eqref{cor1_case2} directly follows from Corollary \ref{c:equivalence} and \ref{primal itr}.
\end{proof}

An important implication of Corollary \ref{c1:primal result} is that the property \eqref{primal itr} of Theorem \ref{t:primal result} is indeed a primal space characteriztion of intrinsic transversality as desired.
Moreover, the equality $\itr=\itrpp$ at \eqref{c1:T4.1} completes the quantitative relationship between the two primal and dual space counterparts in the case of most interest.

\addcontentsline{toc}{section}{References}

\end{document}